\pgfplotsset{compat=1.13}
\title{A double scaling limit for the d-PII equation with boundary conditions}
\author{Maurice Duits\footnote{Department of Mathematics, Royal Institute of Technology, Lindstedtsvägen 25, SE 10044, Stockholm Sweden. E-mail: duits@kth.se} 
\and Diane Holcomb\footnote{Department of Mathematics, Royal Institute of Technology, Lindstedtsvägen 25, SE 10044, Stockholm Sweden. E-mail: holcomb@kth.se}}
\date{}
\theoremstyle{plain}
\newtheorem{theorem}{Theorem}[section] 
\newtheorem{lemma}[theorem]{Lemma}
\newtheorem{proposition}[theorem]{Proposition}
\newtheorem{corollary}[theorem]{Corollary}
\newtheorem{remark}[theorem]{Remark}
\renewcommand{\Re}{\mathop{\mathrm{Re}}}
\newcommand{\azero}{a^{(0)}}
\newcommand{\y}{u} 
\newcommand{\yy}{y} 
\newcommand{\Y}{\nu}
\newcommand{\Ai}{\mathop{\mathrm{Ai}}}
\newcommand{\Rdelta}{\delta}
\newcommand{\diag}{\mathrm{diag}}
\newcommand{\rootroot}{\sqrt{\frac{\sqrt{1-4x^3}-1}{\sqrt{1-4x^3}+1}}}
\DeclareMathOperator*{\Tr}{\mathrm{Tr}}
\numberwithin{equation}{section}
\newcommand{\shift}{\varepsilon}
\begin{document}

\maketitle

\begin{abstract}
    We study a double scaling limit for a solution of the discrete Painlev\'e II equation with boundary conditions. The location of the right boundary point is in the critical regime where the discrete Painlev\'e equation turns into the continuous Painlev\'e II equation. Our main results it that, instead of the Hastings-McLeod solution (which would occur when the right boundary point is at infinity), the solution to the discrete equation converges in a double scaling limit   to a tronqu\'ee solution of the Painlev\'e II equation that behaves like the  Hastings-McLeod solution at minus infinity and has a pole at a prescribed location. Our proof of the double scaling limit is based on finding an  approximation that is sufficiently close in order to apply the  Kantorovich theorem for Netwons method. To meet the criteria for this theorem, we will establish a lower bound for the  solutions to the Painlev\'e II equation that occur (including the Hastings-McLeod solution).
 \end{abstract}
\setcounter{tocdepth}{1}
\tableofcontents

\section{Introduction}

In this paper we study the solution $(a_k)_{k=0}^{n-2}$ to the discrete Painlev\'e II equation (d-PII) 
\begin{equation}\label{eq:dPII}
    a_{k+1}+a_{k-1}
        =\frac{k+\shift}{t} \frac{2a_k}{1-a_k^2},\qquad k=0,\ldots,n-2,
\end{equation}
with $t,\shift>0$ and $n \in \mathbb N$, that is characterized by the boundary conditions 
\begin{equation} \label{eq:BC}
a_{-1}=a_{n-1}=1.
\end{equation}
Note that it is not a priori obvious that such a solution exists, but we will show that it does and is unique. Moreover, we will show that  it  converges,  as $n \to \infty$ in  a particular double scaling limit, to a  special solution of the continuous Painlev\'e II equation (PII)
\begin{equation}
    \label{eq:PII2}
    \yy''(x) = x\yy(x) + 2 \yy^3(x).
\end{equation}
 As we will see, the boundary condition $a_{n-1}=1$ will have a strong impact  and our  double scaling limit differs from established results in the literature in that we observe different solutions to the PII equation.   The solutions that we encouter are characterized by their behavior for $x \to -\infty$ and the location of a left-most pole on $\mathbb R$. We will prove the existence of such solutions and  derive further  properties along the way. But before we come to the precise statements of our results in Section \ref{sec:state} we first recall some historical background on  double scaling limits for the d-PII equation and motivate the boundary conditions \eqref{eq:BC}. 

\subsubsection*{Painlev\'e equations}

The PII equation \eqref{eq:PII2} is the second of six Painlev\'e equations. The story behind these equations starts around  1900 with the works of Painlev\'e, and later Gambier,   who sought to classify all non-linear second order differential equations that satisfy the Painlev\'e property: the only moveable singularities are poles or branch points. Ultimately, the list of such equations can be reduced to six non-linear equations that we nowadays refer to as the Painlev\'e equations. Their major breakthrough came much later after the discovery that they appear  as ODE reductions of integrable non-linear PDE's and play an important role random matrix theory. The PII equation \eqref{eq:PII2} appears as the ODE reduction of the Korteweg-de Vries equation \cite{AbSe}. The celebrated Tracy-Widom distibution \cite{TW} can be expressed in terms of the Hastings-McLeod solution \cite{HM} for the PII equation. That same solution also appears in  the local eigenvalue correlations for Hermitian random matrices  near critical point where the limiting density vanishes quadratically \cite{BI,CK}.  In fact, our main motivation for studying the d-PII with boundary condition \ref{eq:BC} is to ultimately being able to describe such local correlations for  $\beta$-ensembles (the Hermitian models correspond to $\beta=2$), which we will explain in more detail below. For general backgound on Painlev\'e transcendents  including further appearance in mathematical phyciscs we refer to \cite{Clarkson,Conte,FIKN}.

The discrete Painlev\'e equations also have a rich structure with important application to mathematical physic and we refer to \cite{Joshi,VanAssche1} as general references. Their characterization is more complicated than the continuous one, but they can been classified using rational surfaces and affine root systems \cite{Sakai}.  Just as its continuous analogue, the d-PII equation \eqref{eq:dPII} is a reduction of the discrete modified Korteweg-de Vries equation \cite{Nij,Nijhoff}. The discrete Painlev\'e equations  also make an important appearance though the study of matrix integrals in string theory \cite{BK,DS,GM}. For instance, the d-PII  was first studied in this context in  \cite{PS} for an integral over unitary matrices.  

An important property of  discrete Painlev\'e equations is  that,  in certain double scaling limits, they converges to continuous ones (at least formally). For instance, after making inserting the ansatz  \begin{equation} \label{eq:ansatz}
a_{n+k}=n^{-\frac 1 3}  \y\left({k} {n^{-\frac 13}}\right)
\end{equation}
into  \eqref{eq:dPII}  for a  smooth function $\y$ and taking the limit $n\to \infty$ such that $$t=n, \quad k/n^{\frac 13} \to  x,$$ it is not hard to see that the d-PII in  \eqref{eq:dPII} turns into the PII  \eqref{eq:PII2} with  $\y(x)=2^{\frac 13}\yy(2^{\frac 13}x)$. This, of course, is a formal procedure and there is no a priori mechanism that guarantees that solutions to \eqref{eq:dPII} converge to solution of \eqref{eq:PII2}. In fact, it is a major task to make such limits rigorous. One popular direction comes from the theory of orthogonal polynomials and although we do not follow this approach in this paper, we will give a brief overview of some of its most important developments.

\subsubsection*{Orthogonal polynomials}

 Orthogonal polynomials satisfy recurrence relations and for special integrable choices of the orthogonality measure, the recurrence coefficients in these relations themselves satisfy difference equations. Various explicit examples are known in the literature in which these difference equations are particular cases of discrete Painlev\'e equations.  We refer to \cite{VanAssche1} for a good survey of the topic. 

The d-PII equation \eqref{eq:dPII}  can be obtained by considering polynomial on the unit circle \cite{PS}. For $k=0,1,\ldots,$ $\Phi_k(z)$ be the unique monic polynomial of degree $k$  satisfying
\begin{equation} \label{eq:OPUCpii}
    \int_{0}^{ 2 \pi} \Phi_k(e^{i \theta}) e^{-\ell \theta} e^{-t \cos \theta} d\theta=0, \qquad \ell=0,\ldots, k-1. 
\end{equation}
Then there exists $\{\alpha_k\}_{k=0}^\infty$, often referred to as Verblunsky coefficients, such that if we simultaneously define the conjugate polynomials $\Phi_k^*(z) = z^k \overline \Phi_k(1/z)$. The families $\Phi$ and $\Phi^*$ obey the recurrence
\begin{align} \label{eq:Verblunsky}
\Phi_{k+1} &= z \Phi_k(z) - \bar \alpha_k \Phi_k^*(z) \\
\Phi^*_{k+1}& = \Phi_k^*(z) - \alpha_k z \Phi_k(z) \notag
\end{align}
with initial condition $\Phi_0(z)= \Phi_0^*(z)=1$. It  is not difficult to verify that \cite{PS} the Verblunsky coefficients satisfy \eqref{eq:dPII}  with $\shift=1$   for $k=0,1, \ldots$. In fact, they are the unique solution with the properties 
\begin{equation} \label{eq:HMconditions}
    \alpha_{-1}=1, \qquad |\alpha_k| < 1.
\end{equation}
See \cite{VanAssche1} for a discussion and a proof.  Note that instead of the boundary conditions \eqref{eq:BC} that we are interested in, the Verblunsky coefficients  for the orthogonality weight \eqref{eq:OPUCpii} have the condition that they are bounded (which can be interpreted as condition at infinity).

In the past 25 years,  a vast amount of literature has been devoted to the asymptotic study of polynomials that satisfy orthogonality relations. An important driving force behind this are the steepest descent techniques developed in \cite{DKMVZ1,DKMVZ2}  for the Rieman-Hilbert problem that characterizes the orthogonal polynomials \cite{FIK}.  In a celebrated work \cite{BDJ} on the longest increasing subsequence, Baik, Deift and Johansson used this approach to study the orthogonal polynomials on the unit circle  with varying weight $e^{-n \tau \cos \theta} d\theta$ and showed the connection to the PII equation. Although it was not the focus of \cite{BDJ}, the convergence of the Verblunsky coefficients to the Hastings-McLeod solution of the PII equation, in a double scaling limit, can  be derived from their analysis.

Similar Painl\'eve II asymptotics have also been  computed for orthogonal polynomials on the real line.  Indeed, Bleher and Its \cite{BIann,BI} computed the asymptotic behavior of the recurrence coefficients for orthogonal polynomials on the real line with a double well potential. These coefficients satisfy the so-called discrete string equation or the discrete Painlev\'e I equation.  In \cite{BI}  it was shown that they converge,  a double scaling limit, to the Hastings-McLeod solution of the PII equation (they also describe the relation between their work and \cite{BDJ}). After that, Claeys and Kuijlaars \cite{CK} proved that this is a universal behavior and occurs  whenever the equilibrium measure is regular except at an interior point where the density vanishes quadratically.  In a subsequent work \cite{CKV} Claeys, Kuijlaars and Vanlessen proved that one even can obtain the general form of the PII equation by adding a logarithmic singularity to the potential located at the irregular point of the equilibrium measures.  It is also interesting to note that  the discrete string equation was originally motivated by a different double scaling limit  to the Painlev\'e I equation \cite{BK,DS,GM} (instead of PII), but for a choice of parameters where the Hermitian matrix integral is divergent. By considering polynomials that satisfy orthogonality on contours in the plane one can still obtain solutions in terms of recurrence coeffiicents and rigorously study the double scaling limit \cite{DK,FIK}.

As mentioned above, an important difference with these works is that, in this paper, we consider a boundary value problem. Although there in principle exists an orthogonality measure on the circle such that the Verblunsky coefficients solve the boundary value problem, but it is not easy to find this measure explicitly (the boundary condition forces the support to consist of finitely many points). We will therefore not use orthogonal polynomials or Riemann-Hilbert techniques in our proofs. 

 We also stress that the right boundary point in \eqref{eq:BC} is in the regime where we expect convergence to a solution to the continuous PII equation. As we will see, this means that our solution to the d-PII equation will not  converge to the Hastings-Mcleod solution  to the PII equation, but to a different solution that has real poles.  The solutions that we are interested in  have the same asymptotic  behavior at $–\infty$ as the Hastings-McLeod solution and are thus all examples of so-called tronqu\'ee solutions.

\subsubsection*{Closing of the gap for $\beta$-ensembles}

Our motivation for studying the d-PII equation \eqref{eq:dPII} with boundary conditions \eqref{eq:BC} comes from $\beta$-ensembles with a closing gap. For $\beta,\tau>0$ consider the jpdf on $[0,2 \pi ]^n$  proportional to  
\begin{equation} \label{eq:CbetaE}
 \prod_{1\leq j < k \leq n} |e^{i \theta_j}-e^{i \theta_k}|^{\beta} \prod_{\ell=1}^ne^{- \frac12 n \tau \beta \cos \theta_\ell } d\theta_\ell.
\end{equation}
Note that for $\tau=0$ this is the Circular $\beta$-ensemble (C$\beta$E), which is a classical model from random matrix theory. For that choice, the probability measure is rotationally invariant. When $n \to \infty$, the points $e^{i \theta_j}$ for $j=1,\ldots,n$ will be a (random) perturbation of $n$ equidistant points on the unit circle. After zooming in at scales $\sim 1/n$ near an arbitrary point on the unit circle, the random points locally will converge to the Sine$_\beta$ -process. For $\beta=2$ this is the determinantal point processes with kernel $k(\theta, \eta) = \sin\big( \pi(\theta-\eta)\big)/(\theta-\eta)$. For general $\beta$ a characterization is more complicated, and the first one was given by Killip and Stoiciu in \cite{KS}. A later characterization of the process as the eigenvalues of a random Dirac operator was shown by Valk\'o and Vir\'ag in \cite{BVBV}. 

For $\tau>0$ the effect of the term $e^{-n \tau \beta \cos  \theta_\ell}$ is that the points are less likely to be close to $\theta=0$.  They will have a limiting density on the circle, but it is no longer uniform. To understand the limit, it helps to rewrite \eqref{eq:CbetaE} as the Gibbs measure
$$
e^{-\frac{n^2\beta}{2} \mathbb H(\theta_1,\ldots,\theta_n)} d\theta_1\ldots d \theta_n.
$$
where 
$$
	\mathbb H(\theta_1,\ldots,\theta_n)=\frac{1}{n^2}\sum_{j \neq k } \log \left| e^{i \theta_j}-e^{i \theta k}\right|^{-1}+ \frac{1}{n}\sum_{j=1}^n   \tau  \cos \theta_j.
$$
This allows us to think of the $e^{i \theta_j}$ as a Coulomb gas in the presence of an external field $2 \tau \cos \theta$. As $n \to \infty$, the random configuration will converge to the equilibrium measure which can be computed explicitly.  For $0<\tau\leq 1$, the points will still fill out the full circle but with  limiting density \cite[Lemma 4.3]{BDJ}
$$
	(1-\tau \cos \theta) \frac{d\theta}{2 \pi}.
$$
Note that this density is indeed no longer uniform, but dented near $\theta=0$. When $\tau>1$ this dent has turned into a gap and there will be no points near $\theta=0$ (with overwhelming probability).  The transition happens at $\tau=1$. In that case the limiting density will vanish quadratically at $\theta=0$. Near such a point the local process will no longer be described by the $\sin_\beta$ processes. Indeed, for $\beta=2$ the local scaling limit will converge to the determinantal point process where the correlation kernel is expressed in terms of $\Psi$-functions coming from the the Lax-pair for the Hastings-McLeod solution of the PII equation \cite{BI,CK}. For $\beta\neq 2$ the local process is unknown and this is an important open problem that was the motivation for this paper. 

\subsubsection*{Random matrices and d-PII with boundary conditions}

The key to characterizing the local process near the closing of a gap for $\beta$-ensembles is to study the Szeg\"o recursion associated to a random measure $\mu_n = \sum_{k=1}^n q_k \chi_{x = e^{i\theta_k}} $ on the unit circle where $(q_1, q_2,...q_n) \sim $ Dirichlet$(\frac{\beta}{2},..., \frac{\beta}{2})$.  

The Verblunsky coefficients for this measure arise naturally when you characterize the point process as the spectra of a random operator as Killip and Nenciu do in \cite{KN}. The first step is to identify a random matrix model for which the eigenvalues are distributed according to \eqref{eq:CbetaE}. To this end, for $\alpha_j \in \mathbb C$ for $j=0,\ldots, n-1$ such that $|\alpha_0|,\ldots, |\alpha_{n-2}| <1$ and $|\alpha_{n-1}|=1$, define $\rho_k=\sqrt{1-|\alpha_k|^2}$ and 
$$
\Xi_k=\begin{pmatrix} \overline{\alpha_k} & \rho_k\\ \rho_k & -\alpha_k
\end{pmatrix}.
$$ 
Set $\Xi_{-1}=[1]$ and $X_{n-1}=[\overline{\alpha_{n-1}}]$ as $1 \times 1$ matrices. Then let  $U$ be the $n \times n$  matrix 
$$U= \diag(\Xi_0,\Xi_2,\ldots) \diag (\Xi_{-1},\Xi_1,\ldots).$$ 
Then $U$ is a unitary matrix and 
if we choose $\alpha_0,\ldots, \alpha_{n-2}$ and  $\alpha_{n-1}=e^{i \phi}$ randomly   from the joint probability density 
\begin{equation}
    \label{eq:densityVerblunsky}
   e^{t \beta \left( -\Re \alpha_0+ \sum_{k=0}^{n-2}\Re \overline{ \alpha_k }\alpha_{k-1}\right)  }\prod_{\ell=0}^{n-2} (1-|\alpha_\ell|^2)^{\frac{\beta}{2}(n-1-\ell)-1}  d \alpha_\ell \ d\phi
\end{equation}
then the eigenvalues $e^{i \theta_j}$ of $U$ will be distributed according to \eqref{eq:CbetaE} with $t=\tau n$. Indeed, this follows from computing the Jacobian for the map from the Verblunksky coefficients to the spectral measure \cite[Lemma 4.1 and Proposition 4.2]{KN} and the simple computation
$$
    \sum_{\ell=1}^n \cos \theta_\ell= \frac12 \Tr \left(U+ U^*\right)=\Re \alpha_0- \sum_{\ell=0}^{n-2}\Re \overline{ \alpha_\ell }\alpha_{\ell+1}.
$$
Observe that in the special case $t = 0$ there is no correlation between the associated Verblunsky coefficients. The type of dependence when $ t \ne 0$ also appears in the recurrence coefficients for $\beta$-ensembles on $\mathbb{R}$ in the case where the potential is quartic (or higher order). Local limits in this type of model were studied by Krishnapur, Rider, and Vir\'ag as part of showing universality of the edge process \cite{KRV}.

 When $n \to \infty$ it is natural to ask for the ``typical" behavior of the Verblunsky coefficients $\alpha_k$. To this end, write  \eqref{eq:densityVerblunsky} as 
$$
    e^{-\frac{\beta  t}{2} \mathcal H(\alpha)}\  \prod_{\ell=0}^{n-2} d\phi d\alpha_\ell,
$$
with 
\begin{equation}
\label{eq:alphaHamiltonian}
    \mathcal H=- \sum_{\ell=0}^{n-2} \frac{n-1-\ell-\frac{2}{\beta}}{t}\log(1-|\alpha_\ell|^2)+\Re \alpha_0- \sum_{\ell=0}^{n-2}\Re \overline{ \alpha_\ell }\alpha_{\ell+1}.
\end{equation}
From this form we can see that we should expect the Verblunsky coefficients to concentrate near the minimizer of $\mathcal H$ (recall that we scale $t=n \tau$). Observe that the arguments for the minimizer are easy to find. Indeed,
$$
\mathcal H\geq - \sum_{\ell=0}^{n-2} \frac{n-1-\ell-\frac{2}{\beta}}{t}\log(1-|\alpha_\ell|^2)-|\alpha_0|- \sum_{\ell=0}^{n-2}|\alpha_\ell| |\alpha_{\ell+1}|.
$$
with equality if and only if $\alpha_j=|\alpha_j|$ for $j=0,\ldots,n-1$ (and thus, in particular, $\alpha_{n-1}=1$). By setting $|\alpha_{\ell}|=a_{n-2-\ell}$ and $\shift = 1- \frac{2}{\beta}$ we can write the right-hand side of \eqref{eq:alphaHamiltonian} as 
\begin{equation}
\label{eq:aHamiltonian}
H_n(a)= - \sum_{k=0}^{n-2} \frac{k+ \shift}{t}\log(1-a_k^2)- \sum_{k=-1}^{n-2} a_{k}a_{k+1}
\end{equation}
where we have also set 
$$
a_{-1}=a_{n-1}=1.
$$
Any solution of $\nabla H_n=0$ will be a local minimizer.  A straightforward computation shows that $\nabla H=0$ is precisely the d-PII equation \eqref{eq:dPII} with boundary conditions \eqref{eq:BC}.  Studying the solution and its scaling limit is thus an important first step in constructing a random operator that whose spectra is the local process near the closing of a gap for $\beta$-ensembles.

\subsubsection*{Acknowledgements}

M. Duits was partially supported by the Swedish Research Council (VR), grant no. 2016-05450 and grant no. 2021-06015, and the European Research Council (ERC), Grant Agreement No. 101002013. D. Holcomb was partially supported by the Knut and Alice Wallenberg foundation (KAW), grant no. KAW 2015.0359, and Swedish Research Council (VR), grant no. 2018-04758. 

We are grateful to Sara Zahedi for discussions on the Kantorovich Theorem for Newtons method and pointing us to \cite{Yam}.

\section{Statement of results} \label{sec:state}
In this Section we state our main results. The proofs will be postponed to later section.

 Our first result is that the solution to the boundary value problem for the d-PII  exists and is unique. 
\begin{theorem} \label{thm:last}
	Let $t,\shift>0$ and $n \in \mathbb N$.
	There exists a unique solution $(a_k)_{k=0}^{n-2} \in (0,1)^{n-1}$ to the d-PII equation \begin{equation}\label{eq:dPIIaa}
	a_{k+1}+a_{k-1}
	=\frac{k+\shift}{t} \frac{2a_k}{1-a_k^2},  \qquad k=0,\ldots,n-2,
\end{equation}
 with  boundary conditions 
$
	a_{-1}=a_{n-1}=1.
$
\end{theorem}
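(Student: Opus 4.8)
\emph{Proof strategy.} The plan is to get existence from the variational structure exhibited in the introduction and then to prove uniqueness by a discrete Wronskian argument applied directly to the recurrence.

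For existence I would work with the Hamiltonian $H_n$ of \eqref{eq:aHamiltonian}, viewed as a smooth function on the open cube $(-1,1)^{n-1}$ with $a_{-1}=a_{n-1}=1$ held fixed. Each term $-\tfrac{k+\shift}{t}\log(1-a_k^2)$ is nonnegative and the bilinear part $-\sum_{k=-1}^{n-2}a_ka_{k+1}$ is bounded in absolute value by $n$ on the cube, so $H_n(a)\to+\infty$ as $\max_k|a_k|\to 1$; hence the sublevel sets of $H_n$ are compact subsets of $(-1,1)^{n-1}$ and $H_n$ attains a minimum at some interior point $a^{\ast}$, which then satisfies $\nabla H_n(a^{\ast})=0$. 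As noted in the introduction, this is exactly \eqref{eq:dPIIaa} together with the boundary conditions, so a solution in $(-1,1)^{n-1}$ exists. To upgrade it to $(0,1)^{n-1}$, I would note that replacing $a^{\ast}$ by $(|a^{\ast}_k|)_k$ leaves the logarithmic part unchanged and does not increase the bilinear part (since $a_{-1}=a_{n-1}=1>0$), so the new point is again a minimizer, hence again a solution; and a nonnegative solution cannot have $a_{k_0}=0$, because the recurrence at the smallest such $k_0$ would give $a_{k_0+1}+a_{k_0-1}=0$ with both terms $\ge 0$ (and with $a_{-1}=1$ when $k_0=0$), which is impossible.

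For uniqueness, suppose $(a_k)$ and $(b_k)$ are two solutions in $(0,1)^{n-1}$, relabelled so that $a_0\le b_0$. If $a_0=b_0$ the recurrence forces $a_k=b_k$ for all $k$, so assume $a_0<b_0$ and set $c_k=b_k-a_k$; then $c_{-1}=0$ and $c_0>0$. Subtracting the two recurrences and using $\tfrac{2a}{1-a^2}-\tfrac{2b}{1-b^2}=\tfrac{2(a-b)(1+ab)}{(1-a^2)(1-b^2)}$ yields
\begin{equation*}
c_{k+1}+c_{k-1}=\lambda_k\,c_k,\qquad
\lambda_k=\frac{2(k+\shift)(1+a_kb_k)}{t\,(1-a_k^2)(1-b_k^2)}>0 .
\end{equation*}
I would then introduce the discrete Wronskian $w_k=c_k a_{k-1}-c_{k-1}a_k$; eliminating $c_{k+1}$ and $a_{k+1}$ by the recurrences, a short computation reduces this to
\begin{equation*}
w_{k+1}-w_k=\frac{2(k+\shift)\,a_kb_k(a_k+b_k)}{t\,(1-a_k^2)(1-b_k^2)}\,c_k ,
\end{equation*}
whose right-hand side has the same sign as $c_k$ since all other factors are positive. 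Since $w_0=c_0>0$, an induction shows $c_k>0$ for all $0\le k\le n-1$: if $c_0,\dots,c_k>0$, then $w_1,\dots,w_{k+1}$ all exceed $w_0>0$, and $w_{k+1}=c_{k+1}a_k-c_ka_{k+1}>0$ together with $a_k>0$ and $a_{k+1}\ge 0$ forces $c_{k+1}>0$. In particular $c_{n-1}>0$, contradicting $c_{n-1}=b_{n-1}-a_{n-1}=0$. Hence $a_0=b_0$, and the solution is unique.

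The coercivity estimate and the sign-flip step are routine; the one genuinely delicate point is guessing the right Wronskian-type combination $w_k$, the one that telescopes with a definite sign — this is what makes the discrete maximum principle go through for \emph{all} $t,\shift>0$ with no smallness hypothesis. An essentially equivalent route would be a shooting argument showing that the value $a_{n-1}$ produced by the recurrence is a strictly increasing function of $a_0$, which needs the same identity applied to $\partial a_k/\partial a_0$.
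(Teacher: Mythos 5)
Your proof is correct, but it takes a genuinely different route from the paper, most notably for uniqueness. The paper also starts from the Hamiltonian $H_n$, but its key move is the substitution $a_k=\sqrt{s_{k+1}}$: in the $s$-variables the Hessian becomes a positive diagonal matrix plus a positive-definite tridiagonal matrix, so the transformed functional is strictly convex and has a unique critical point, which gives existence and uniqueness in one stroke (existence there is handled by compactness on the closed cube $[0,1]^n$ together with a boundary analysis showing the minimizer is interior). You instead minimize $H_n$ directly on the open cube $(-1,1)^{n-1}$ using coercivity of the logarithmic terms, symmetrize to land in $[0,1)^{n-1}$, and rule out zero components via the recurrence; then you prove uniqueness by a discrete Wronskian/Sturm comparison: the identity
\begin{equation*}
w_{k+1}-w_k=\frac{2(k+\shift)\,a_kb_k(a_k+b_k)}{t\,(1-a_k^2)(1-b_k^2)}\,c_k,\qquad w_k=c_ka_{k-1}-c_{k-1}a_k,
\end{equation*}
is correct (I checked the algebra), the induction propagates $c_k>0$ up to $k=n-1$, and this contradicts $c_{n-1}=0$. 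Both arguments work for all $t,\shift>0$. The paper's convexity route buys slightly more than the theorem itself: it identifies the solution as the \emph{unique critical point and unique minimizer} of $H_n$ on the whole domain, a characterization that is invoked later in the proof of Theorem \ref{thm:main}. Your route is more elementary (no change of variables) and yields as a byproduct the strict monotonicity of the shooting map $a_0\mapsto a_{n-1}$, but if you wanted to substitute it into the paper you would still need the variational characterization of Proposition \ref{prop:last} for the later arguments.
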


We will prove this theorem Section \ref{sec:proofOfMain}. 

  In \cite[Theorem 3.10]{VanAssche1}, the equivalent statement for the case $n=\infty$ was proved using a fix point argument for $t<1$. The same proof (almost per verbatim) can be applied for proving Theorem \ref{thm:last}. However, for $t\geq 1$ the argument in \cite{VanAssche1} breaks down and left as an open problem. The proof that we present here works for all $t>0$. It  is based on a convexity argument showing that $H_n$ in \eqref{eq:aHamiltonian} has a unique minimizer on $[0,1]^{n-1}$ that occurs in the interior of the domain.

Now that we have established existence and uniqueness, we look at a double scaling limit  for  $ a_{n+k}$  as  $n \to \infty$ such that 
$$
t=n-2^{-\frac13} \sigma n^{\frac 13}, \text{ and } k/n^{\frac 13} \to x \in (-\infty,0).
$$
Key to our proofs is an approximation to the unique solution in terms of particular solutions to the PII equation. Note that if we set 
$
t=n
$, insert the ansatz  $a_{k}= A(k/n)$ in \eqref{eq:dPIIaa} and take the limit $n\to \infty$ it follows that 
$$
A(\xi)=\frac{\xi A(\xi)}{1-A(\xi)^2}.
$$
Together with the boundary condition $A(0)=1$, this gives $A(\xi)=\sqrt{1-\xi}$.  Note that $A(\xi)$ vanishes as a square root near $\xi=1$ and it turns out that $A(k/n)$ is not a good approximation for $a_k$ for $k \approx n$. In this regime we will use the ansatz \eqref{eq:ansatz} with $\y(x)=2^{\frac 13}\yy(2^{\frac 13} x+\sigma)$ and $\yy$ a solution to the PII equation
\begin{equation}
	\label{eq:PII2a}
	\yy''(x) = x\yy(x) + 2 \yy^3(x).
\end{equation}
The solution that $\yy$ should  to match $A(\xi)$ and this means that we look for solutions with the following asymptotic behavior at $-\infty$:
\begin{equation}
	\label{eq:boundary1pre}
	\yy (x) = \sqrt{-x/2}\left(1 + o(1)\right), \quad x\to -\infty.
\end{equation}
The other condition on the solution $\yy(x)$ comes from the boundary condition $a_{n-1}=1$. Indeed, because of the factor $n^{-1/3}$ the approximation \eqref{eq:ansatz} can only satisfy the condition if $\yy(x)$ has a pole at $\sigma$. 

Solutions to the PII equation \eqref{eq:PII2a} are meromorphic for $x\in \mathbb C$. Their behavior near $x\to \infty$ is subject to the Stokes phenomenon: the asymptotics is different in the six  sectors of the form $k \pi /3\arg x <(k+1)\pi/3$ for $k=0,1,\ldots,5.$. The rays separating these sectors  are called critical rays  or Stokes lines.  Note that the negative real line is such a critical line. As was shown by Boutroux \cite{Boutroux}, there are special solutions  that have  the same  asymptotic behavior in two  adjacent sectors. Such solutions are often referred to as tronqu\'ee solutions. In fact, Boutroux showed that there exists a one-parameter  family of solutions to \eqref{eq:PII2} that satisfy \eqref{eq:boundary1pre} for $|x| \to \infty$ such that $|\arg(-x)|< \pi/3$. Moreover, the $o(1)$ term in \eqref{eq:boundary1pre}  can be made more precise (see equation (11.5.32) in \cite{FIKN}) and we have
\begin{equation}
	\label{eq:boundary1}
	\yy (x) = \sqrt{-x/2}\left(1 -\tfrac18 (-x)^{-3}+\mathcal O((-x)^{-6})\right), 
\end{equation}
as $|x| \to \infty$ such that $|\arg(-x)|< \pi/3$. The most famous example the family of solutions is the Hastings-McLeod solution \cite{HM}, which we will denote by $\yy_{0}(x)$. This solution is a real analytic solution defined for all $x \in \mathbb R$, with asymptotic behavior 
\begin{equation}
	\label{eq:HMboundary}
	\yy_{0}(x) =\Ai(x), \qquad x \to + \infty,
\end{equation}
where $\Ai(x)$ is the Airy function. The other (real) solutions to \eqref{eq:PII2} that satisfy \eqref{eq:boundary1} are parameterized by $b \in \mathbb R$ and characterized by the following asymptotic behavior  $x\to-\infty$
\begin{equation} \label{eq:familybatinfinity}
	\yy_b(x)=\yy_0(x)+ \frac{b}{(-x)^\frac14} \exp\left(-\frac{2 \sqrt2 }{3} (-x)^{3/2}\right)(1+o(1)), \qquad x \to -\infty.
\end{equation}
This means that the solutions $\yy_b$ only differ by superexponentially small terms as $x\to-\infty$.

\begin{remark} \label{remark:Stokes} The isomonodromy approach to Painlev\'e equations (see \cite{FIKN} for a thorough discussion) provides  a bijection from the set of Stokes parameters
	$$
	\{(s_1,s_2,s_3)\in \mathbb C^3 \mid s_1-s_2+s_3+s_1s_2s_3=0\},
	$$
	to the set of solutions of the PII equation. By choosing 
	$s_1=-i$ and $s_3=i,$
	the parameter $s_2$ can be arbitary and this is precisely the one-parameter family of interest to us. By choosing $s_2\in \mathbb R$ we obtain solutions that take real values for $x \in \mathbb R$.   We refer to Chapter 10 in \cite{FIKN}  (and the references therein) for a discussion on asymptotics on the critical rays and Chapter 11 for a discussion on the Stokes phenomenon.  See also \cite{Kapaev,Novok}. Then  \eqref{eq:familybatinfinity} is part 3) of Theorem 10.2 in \cite{FIKN}, which also  states $b$ is proportional to $s_2$ as follows:
	$
	b=-\frac{s_2}{\sqrt{\pi} 2^{7/4}}. 
	$
	
\end{remark}

 
The solutions $\yy_b$ for $b\neq 0$ will be meromorphic, possibly with poles on the real axis. It is also known  that the location of the poles depends meromorphically on $b$. From the boundary condition in \eqref{eq:boundary1} it follows that there must either be a first pole on the real line (counted from $-\infty$), or no poles. We will denote the first pole by $\sigma(b),$ with $\sigma(b) =+\infty$ if  $\yy_b$ has no poles. The Hastings-McLeod solution has no poles on the real line and thus $\sigma(0)=+\infty$. In fact, the Hastings-McLeod solution is special since it is a bitronquee solution: it has a uniform asymptotics in the two sectors defined by $|\arg(-x)|<\pi/3$ and $|\arg x|<\pi/3$ and each of them contains a critical ray. It is the unique such solution among the family $\{\yy_b\}$ and the other (real) solutions do have poles on the real line.  This can be seen from the following asymptotic formula (see \cite[Theorem 10.1]{FIKN} and Remark \ref{remark:Stokes} above)
\begin{equation}\label{eq:stokesplusinf}
	\yy_b(x)= \mathrm{sgn}(b)\sqrt{x/2} \cot \left(\frac{\sqrt{2}}{3} x^{3/2}+\frac{\gamma}{2}\log (8\sqrt 2x^{3/2})+\eta+\mathcal O(x^{-3/2})))\right)+\mathcal O(x^{-1}), 
\end{equation}
as  $x\to +\infty$, where $\gamma$ and $\eta$ are constants that depend explicitly on $b$.  This expansion is valid uniformly outside neighborhoods of the poles of the leading term. From this expansion (and the fact that $\yy_b$ is real valued) we see indeed that $\yy_b$ has infinitely many poles on the real line and  we can even find the asymptotic distribution as $x\to + \infty$. In particular, we conclude that $\sigma(b)< + \infty$ iff $b\neq 0$.

Given $\sigma\in \mathbb R$, we will be interested in finding the solution $\yy_b$ such that $\sigma(b)=\sigma.$ Here we encounter a major challenge, as it is not obvious at all that  such a solution  exists.   By using a power series expansion one can verify that there exists a one-parameter family of solutions to the PII with a pole at any given $\sigma\in \mathbb R$. It follows from the form of the equation that solutions to  PII can only have poles of order one with residue $\pm 1$ (and we are interested in solutions with residue $-1$). However, there is no simple way of deriving the asymptotic behavior of these solutions for $x \to -\infty$ and  it is very hard to tune the remaining parameter such that the solution also satisfies \eqref{eq:boundary1}.  Indeed, the isomonodromy approach \cite{FIKN} makes it possible to relate asymptotic behavior at different singularities, but any $\sigma \in \mathbb R$ is a regular point for PII equation and it is far from obvious to relate the asymptotic behavior at $x\to -\infty$ to the behavior near the pole at $x=\sigma$. This has only be done in case for special symmetric solutions \cite{Kitaev}, but these have an asymptotic behavior that is different from \eqref{eq:boundary1} and thus not important to us.

In our next main is, we show that solutions $\yy_b$ and the pole location $\sigma(b)$  are monotone in the parameter $b$.  More precisely, we show the following:
\begin{theorem}
	\label{thm:ybounds}
	For $b>0$ there exists a exists an $\sigma(b)\in \mathbb{R}$ such that the solution $\yy_b(x)$ is analytic on the interval $(-\infty, \sigma(b))$ and has a simple pole at $\sigma(b)$ with residue $-1$. Moreover
	\begin{enumerate}
		\item The map $b\mapsto \sigma(b)$ is a strictly decreasing bijection from $(0,\infty)$ to $\mathbb{R}$.
		
		\item If $b >b' \ge 0$, then $\yy_b(x) > \yy_{b'}(x)$ for all $x\in (-\infty, \sigma(b))$.	
\end{enumerate}
\end{theorem}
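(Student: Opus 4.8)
The plan is to compare two solutions $\yy_b$ and $\yy_{b'}$ of PII on the region where both are analytic, exploiting the ODE comparison principle and the very precise knowledge of the behavior at $-\infty$ in \eqref{eq:familybatinfinity}. Fix $b>b'\ge 0$. First I would set $w(x)=\yy_b(x)-\yy_{b'}(x)$ and observe that from \eqref{eq:familybatinfinity} we have, as $x\to-\infty$,
\[
	w(x)=\frac{b-b'}{(-x)^{1/4}}\exp\!\left(-\tfrac{2\sqrt2}{3}(-x)^{3/2}\right)\bigl(1+o(1)\bigr)>0,
\]
so $w>0$ on some interval $(-\infty,x_0)$. Subtracting the two copies of \eqref{eq:PII2a} gives
\[
	w''(x) = \bigl(x + 2(\yy_b^2+\yy_b\yy_{b'}+\yy_{b'}^2)\bigr)\,w(x).
\]
The coefficient multiplying $w$ need not be positive (it is negative for $x$ near $-\infty$, where both solutions are small), so a naive maximum principle does not immediately apply; the real work is to run a continuation/barrier argument. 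Let $I$ be the maximal interval containing $(-\infty,x_0)$ on which $w>0$ and both $\yy_b,\yy_{b'}$ are finite. Suppose for contradiction that the right endpoint $x^\ast$ of $I$ is finite and that $\yy_{b'}$ (hence $\yy_b$, by what follows) is still analytic there, so $w(x^\ast)=0$. On $I$ one has $w>0$, and I would show $w$ cannot have an interior local maximum followed by a return to $0$: at any interior critical point $w'(x)=0$ with $w(x)>0$ we get $w''(x)=(x+2(\cdots))w(x)$, and I would leverage monotonicity information about the solutions themselves — namely that on the interval of analyticity the relevant combination $x+2(\yy_b^2+\yy_b\yy_{b'}+\yy_{b'}^2)$ is eventually positive (since $\yy_b(x)\sim\sqrt{-x/2}$ makes $2\cdot 3\cdot(-x/2)=-3x$ dominate $x$), so on a terminal subinterval $w$ is convex and cannot come back down to $0$ without first crossing $0$ from above, which it cannot by definition of $I$. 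The remaining danger is that $w$ stays positive but a pole of $\yy_{b'}$ or $\yy_b$ is reached. Here is where I use the lower-bound philosophy advertised in the abstract and Theorem~\ref{thm:ybounds}(1): if $\yy_b>\yy_{b'}$ on $(-\infty,x^\ast)$ and $\yy_{b'}$ blows up to $+\infty$ at $x^\ast$ (residue $-1$ poles are $-1/(x-\sigma)+O(1)$, i.e. $\to+\infty$ from the left), then $\yy_b>\yy_{b'}$ forces $\yy_b\to+\infty$ as well at or before $x^\ast$, and a Laurent-expansion comparison of the two $-1/(x-\sigma)$ singularities, together with the subleading coefficients determined by PII, shows the pole of $\yy_b$ must occur strictly to the left of that of $\yy_{b'}$ unless $b=b'$. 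This gives $\sigma(b)\le\sigma(b')$, and running the strict comparison (using $w>0$ strictly, propagated by the linear ODE for $w$ and uniqueness for the IVP: $w$ and $w'$ cannot simultaneously vanish) upgrades this to $\sigma(b)<\sigma(b')$ and to $\yy_b>\yy_{b'}$ on all of $(-\infty,\sigma(b))$, which is exactly part 2 and the strict monotonicity in part 1.

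For the bijection claim in part 1, I would argue in three steps. Monotonicity (just established) gives injectivity of $b\mapsto\sigma(b)$ on $(0,\infty)$. Continuity of $\sigma$ follows from continuous (indeed meromorphic) dependence of solutions of PII on the parameter $b$, as already noted in the text before the theorem, combined with the simple-pole structure: a simple pole with nonzero residue persists and moves continuously under perturbation, so $\sigma$ is continuous where finite, and the comparison estimate above prevents $\sigma$ from jumping. For surjectivity onto $\mathbb{R}$ I need the two limiting behaviors $\sigma(b)\to-\infty$ as $b\to+\infty$ and $\sigma(b)\to+\infty$ as $b\to 0^+$. The latter is essentially \eqref{eq:familybatinfinity} plus \eqref{eq:stokesplusinf}: as $b\to0^+$, $\yy_b\to\yy_0$ locally uniformly on any interval avoiding poles of $\yy_0$, and since $\yy_0$ has no real poles, the first pole $\sigma(b)$ must escape to $+\infty$ (quantitatively, \eqref{eq:stokesplusinf} locates the first zero of the $\cot$ argument at $x$ of order $|\log b|^{2/3}\to\infty$). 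For $\sigma(b)\to-\infty$ as $b\to\infty$ I would use part 2: $\yy_b$ grows with $b$, and on the region $x+2(\cdots)>0$ the solution is forced to blow up in finite "time"; a comparison with an explicit ODE barrier of the form $v''=c\,v^3$ (whose solutions blow up at a location moving to the right/left with the size of the initial data) pins $\sigma(b)$ between computable bounds that tend to $-\infty$ with $b$. Combining continuity with these two limits and the intermediate value theorem yields surjectivity, and with injectivity we get the strictly decreasing bijection.

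I expect the main obstacle to be the pole-crossing step: controlling the comparison $w=\yy_b-\yy_{b'}$ precisely as one or both solutions approach their first pole, and in particular ruling out the scenario in which $w$ remains positive but $\yy_{b'}$ hits a pole \emph{at the same point} as $\yy_b$ (which would break strictness of $\sigma(b)<\sigma(b')$) or \emph{before} $\yy_b$ (which would contradict $\yy_b>\yy_{b'}$). The clean way around this is to work not with $w$ directly near a pole but with the reciprocals or with the residue-normalized local coordinate: near a prospective common pole $\sigma$, write $\yy_b(x)=-\frac{1}{x-\sigma}+c_b(x-\sigma)+O((x-\sigma)^3)$ with the Taylor coefficients of the regular part determined recursively by PII (only the coefficient $c_b$ is free), and show that $\yy_b>\yy_{b'}$ just to the left of $\sigma$ forces $c_b>c_{b'}$, which by the same recursion is incompatible with $\yy_{b'}$ having a pole exactly at $\sigma$ while $\yy_b$ does not reach one yet — contradiction. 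Getting this local analysis to mesh rigorously with the global continuation on $(-\infty,\sigma)$, and making sure the "eventually the coefficient is positive" threshold $x_1$ with $x_1+2(\yy_b^2+\yy_b\yy_{b'}+\yy_{b'}^2)>0$ is located to the left of any pole, is the delicate part; everything else (sign of $w$ at $-\infty$, convexity on the terminal interval, continuity and the two one-sided limits for surjectivity) is comparatively routine.
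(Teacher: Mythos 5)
Your overall strategy -- compare $w=\yy_b-\yy_{b'}$, use \eqref{eq:familybatinfinity} to get $w>0$ near $-\infty$, write $w''=\bigl(x+2(\yy_b^2+\yy_b\yy_{b'}+\yy_{b'}^2)\bigr)w$ and run a convexity/continuation argument to a first zero -- is exactly the paper's strategy (the paper uses the equivalent double-integral form of this identity). But there is a genuine gap at the crux of the argument: you need the coefficient $q(x)=x+2(\yy_b^2+\yy_b\yy_{b'}+\yy_{b'}^2)$ to be nonnegative on the \emph{entire} interval up to the first hypothetical zero of $w$, not just ``eventually'' or ``on a terminal subinterval.'' If $q<0$ somewhere in the middle, $w$ can attain an interior maximum there and descend back to zero, and your contradiction evaporates. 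The asymptotics $\yy_b\sim\sqrt{-x/2}$ only control $q$ near $-\infty$ (where, incidentally, $q\approx x+2\cdot 3\cdot(-x/2)=-2x>0$; your remark that the coefficient is negative near $-\infty$ because the solutions are small there is backwards -- the solutions grow there, and the danger zone is the intermediate region where they might dip low). The paper closes precisely this gap with the separate lower bound $\yy_b(x)>\sqrt{-x/6}$ (Theorem \ref{thm:ineq}), which is exactly the threshold giving $\yy_b^2+\yy_b\yy_{b'}+\yy_{b'}^2>-x/2$, hence $q>0$ everywhere; and that bound itself is a substantial piece of work (a tangent-line barrier argument plus the known positivity of the Hastings--McLeod solution), established first for $\yy_0$ and then bootstrapped: one proves monotonicity against $b'=0$ using the bound for $\yy_0$, deduces $\yy_b>\yy_0>\sqrt{-x/6}$ for all $b>0$, and only then runs the comparison for general pairs $b>b'>0$. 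Your proposal contains no substitute for this lower bound, so the continuation step does not go through as written.

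A secondary divergence: for surjectivity you propose $\sigma(b)\to-\infty$ via a blow-up barrier $v''=cv^3$, which requires knowing that $\yy_b(x_0)\to\infty$ at fixed $x_0$ as $b\to\infty$ -- something \eqref{eq:familybatinfinity} does not give uniformly in $b$. The paper instead argues by contradiction: if $\sigma(b)$ had a finite limit, the monotone limit $\lim_{b\to\infty}\yy_b$ would satisfy the integrated PII equation (monotone plus dominated convergence, again using the $\sqrt{-x/6}$ bound to dominate the negative part of the integrand) and hence be a genuine solution, contradicting the fact that all solutions carry finite Stokes data. Your treatment of continuity of $\sigma$ and of the local Laurent comparison at a prospective common pole are reasonable refinements, but they do not repair the main missing ingredient above.
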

To the best of our knowledge these results are new. In our proof, the map $b \mapsto \sigma(b)$  is not explicit and it would be very interesting if one could get better control on it.  

An important ingredient in the proof of Theorem \ref{thm:ybounds} is the following inequality.
\begin{theorem}	
\label{thm:ineq}	
	Let $b \geq 0$. Then we have
	\begin{equation} \label{eq:mainineq}
	\yy_b(x) > \max \left(\sqrt{-x/2}\sqrt{\frac{\sqrt{1-2x^3}-1}{\sqrt{1-2x^3}+1}},\sqrt{-x/6}\right),
	\end{equation}
	for  $x \leq \min(\sigma(b),0)$.
	
\end{theorem}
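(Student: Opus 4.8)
\section*{Proof plan for Theorem \ref{thm:ineq}}

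\textbf{Step 1: an explicit form of the first barrier and a differential inequality.}
Write $r=r(x)=\sqrt{1-2x^3}$, so that $r\ge 1$ for $x\le 0$. A direct manipulation shows that for $x<0$
$$
\sqrt{-x/2}\,\sqrt{\frac{\sqrt{1-2x^3}-1}{\sqrt{1-2x^3}+1}}=\frac{r-1}{-2x}=:\phi(x),
$$
and $\phi$ is the positive root of the quadratic relation $2x\phi^2-2\phi+x^2=0$. From this relation one reads off $x+2\phi^2=2\phi/x$ and $\phi'=(\phi^2+x)/r$, and a somewhat lengthy but elementary computation yields the exact identity
$$
\phi''-x\phi-2\phi^3=\frac{4r^3-9r^2+9}{4r^3}.
$$
Since $q(r):=4r^3-9r^2+9$ has $q(1)=4$, $q'(r)=6r(2r-3)$ and $q(3/2)=9/4$, one has $q(r)\ge 9/4>0$ for all $r\ge 1$; hence $\phi''>x\phi+2\phi^3$ on $(-\infty,0)$. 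For the second barrier, for fixed $x<0$ the map $y\mapsto xy+2y^3$ attains its minimum over $y>0$ at $y=\sqrt{-x/6}$, with minimal value $\tfrac{2x}{3}\sqrt{-x/6}$; comparing with $(\sqrt{-x/6})''$ shows $(\sqrt{-x/6})''>x\sqrt{-x/6}+2(\sqrt{-x/6})^3$ exactly for $x<-(3/8)^{1/3}$. Finally $\phi(x)=\sqrt{-x/6}$ precisely when $r=2$, i.e. at $x_c:=-(3/2)^{1/3}$, with $\phi>\sqrt{-x/6}$ for $x<x_c$ and $\phi<\sqrt{-x/6}$ for $x_c<x<0$; also $\phi<\sqrt{-x/2}$ on all of $(-\infty,0)$.

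\textbf{Step 2: reduction to the Hastings--McLeod solution.}
I would next show that for $b>0$ one has $\yy_b>\yy_0>0$ on $(-\infty,\sigma(b))$. The Wronskian $W=\yy_b'\yy_0-\yy_b\yy_0'$ satisfies $W'=2\yy_b\yy_0(\yy_b^2-\yy_0^2)$, and from \eqref{eq:familybatinfinity} one has $\yy_b>\yy_0$ near $-\infty$ while $W=(\yy_b'-\yy_0')\yy_0-(\yy_b-\yy_0)\yy_0'\to 0$ as $x\to-\infty$ (the differences decay superexponentially and $\yy_0,\yy_0'$ grow polynomially); since $W'>0$ near $-\infty$, this forces $W>0$ there. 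A continuation argument then shows $\yy_b>\yy_0$ on all of $(-\infty,\sigma(b))$: if $\yy_b$ first met $\yy_0$ at $x^*<\sigma(b)$ then $W>0$ on $(-\infty,x^*]$ (as $\yy_b>\yy_0>0$ keeps $W'>0$), so $W(x^*)>0$, which gives $(\yy_b-\yy_0)'(x^*)>0$ and contradicts $\yy_b-\yy_0$ decreasing to $0$ at $x^*$. As $\yy_0>0$ on $\mathbb R$ (classical), this also yields $\yy_b>0$ on $(-\infty,\sigma(b))$. Hence, once $\yy_0>\max(\phi,\sqrt{-x/6})$ is proved on $(-\infty,0]$, the assertion for arbitrary $b\ge 0$ follows on $(-\infty,\min(\sigma(b),0)]$, using in addition that $\yy_b(x)\to+\infty$ as $x\uparrow\sigma(b)$.

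\textbf{Step 3: the Hastings--McLeod bound.}
It remains to prove $\yy_0(x)>\max\bigl(\phi(x),\sqrt{-x/6}\bigr)$ for $x\le 0$. First I would establish $\yy_0>\sqrt{-x/6}$ on $(-\infty,0]$. The structural point is that wherever $0<\yy_0<\sqrt{-x/2}$ one has $\yy_0''=\yy_0(x+2\yy_0^2)<0$, so $\yy_0$ is concave there; since $\yy_0$ is positive and (as is standard) strictly decreasing on $\mathbb R$, a dip of $\yy_0$ below $\sqrt{-x/6}$ would trap it in the strip $0<\yy_0<\sqrt{-x/2}$ with $\yy_0'$ bounded away from $0$, forcing it either to hit $0$ (impossible) or to be squeezed to $0$ as $x\uparrow 0$ (impossible, since $\yy_0(0)>0$); on $(-\infty,-(3/8)^{1/3})$ one may alternatively invoke the inequality from Step 1 for $\sqrt{-x/6}$ together with the strong maximum principle on the set $\{\yy_0\ge\sqrt{-x/6}\}$, where $x+6\yy_0^2\ge0$. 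On the remaining compact interval $[-(3/8)^{1/3},0]$ one uses monotonicity of $\yy_0$ together with a quantitative lower bound on $\yy_0(0)$. Given $\yy_0>\sqrt{-x/6}$, the bound $\yy_0>\phi$ on $(-\infty,x_c]$ follows from Step 1: an interior minimum $x_2$ of $\yy_0-\phi$ with $(\yy_0-\phi)(x_2)\le 0$ would have $\yy_0(x_2)\le\phi(x_2)$ with both values in $[\sqrt{-x_2/6},\infty)$, where $y\mapsto x_2y+2y^3$ is nondecreasing, so $\phi''(x_2)>x_2\phi(x_2)+2\phi(x_2)^3\ge x_2\yy_0(x_2)+2\yy_0(x_2)^3=\yy_0''(x_2)$, contradicting $(\yy_0-\phi)''(x_2)\ge 0$. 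On $(x_c,0]$ we have $\sqrt{-x/6}\ge\phi$, so $\yy_0>\sqrt{-x/6}$ already gives $\yy_0>\max(\phi,\sqrt{-x/6})$ there, and the proof is complete.

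\textbf{Main obstacle.}
The delicate point is the bound $\yy_0>\sqrt{-x/6}$ on the subinterval where $\sqrt{-x/6}$ fails the differential inequality of Step 1 (roughly $x\in(-(3/8)^{1/3},0)$): there $y\mapsto xy+2y^3$ is \emph{decreasing} on exactly the range of values a hypothetical violation would occupy, so neither a comparison argument nor a maximum principle applies directly, and one is forced to exploit the induced concavity of $\yy_0$ below the envelope $\sqrt{-x/2}$ together with its strict positivity on $(-\infty,\sigma(b))$ — and a lower bound on $\yy_0(0)$ — to exclude any excursion of $\yy_0$ beneath the barrier.
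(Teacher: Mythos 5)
Your Steps 1 and 2 are sound: I checked the identity $\phi''-x\phi-2\phi^3=\frac{4r^3-9r^2+9}{4r^3}$ for $\phi=\frac{r-1}{-2x}$, $r=\sqrt{1-2x^3}$, and it is correct, as is the positivity of $4r^3-9r^2+9$ for $r\ge1$; and your Wronskian argument for $\yy_b>\yy_0>0$ is a genuinely cleaner route to monotonicity than the paper's (the paper's Theorem \ref{thm:monotonicity} runs through a double-integral identity whose integrand is only signed \emph{because} of the bound $\yy>\sqrt{-x/6}$, whereas your factor $2\yy_b\yy_0(\yy_b^2-\yy_0^2)$ needs only positivity). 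The final comparison $\yy_0>\phi$ on $(-\infty,x_c]$ via the interior-minimum argument is also fine, \emph{given} the bound $\yy_0>\sqrt{-x/6}$.

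The gap is exactly where you flag it, and it is not closed: the bound $\yy_0(x)>\sqrt{-x/6}$ on the interval where $\sqrt{-x/6}$ is not a subsolution, roughly $x\in(-(3/8)^{1/3},0)$. Your trapping dichotomy is incomplete. If $\yy_0$ dips below $\sqrt{-x/6}$ at $x_0$, it is \emph{not} forced to stay in the strip $0<\yy_0<\sqrt{-x/2}$ until it is squeezed to $0$: the upper boundary $\sqrt{-x/2}$ collapses to $0$ with slope $\to-\infty$ as $x\uparrow0$, so the (merely decreasing, concave) solution can exit the strip upward at some $x_1\in(x_0/3,0)$ and then become convex in region II, flatten out, and remain positive up to and beyond $x=0$ — which is precisely what the Hastings--McLeod solution actually does. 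The only contradiction available on this branch is quantitative: a dip at $x_0$ forces $\yy_0(0)<\sqrt{-x_0/6}$, and for the worst case $x_0=-(3/8)^{1/3}$ this threshold is $\approx0.347$ while $\yy_0(0)\approx0.367$. So you would need a rigorous lower bound on $\yy_0(0)$ accurate to within about $5\%$, which is neither classical nor supplied, and is essentially of the same nature as the inequality being proved. The paper closes this step differently (Lemma \ref{lem:keyineq}, in the rescaled variable $\y_0$): from the hypothetical crossing point it constructs a comparison solution $\y_*$ launched from where the tangent line meets $y=\sqrt{-x}$, shows $\y_*\ge\y_0$ propagates forward, and proves by an explicit double-integral estimate that $\y_*(-2x_0)<0$ — deriving the contradiction from positivity of the Hastings--McLeod solution at the \emph{positive} point $-2x_0$, with no numerical input about $\yy_0(0)$. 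Without such a forward-propagation (or an honest bound on $\yy_0(0)$), your proof of the $\sqrt{-x/6}$ half of \eqref{eq:mainineq} — and hence, in your logical ordering, of the $\phi$ half as well — does not go through.
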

The proofs of Theorem \ref{thm:ybounds} and \ref{thm:ineq} are given in Section \ref{sec:cont}.

Note that in this theorem we allow $b=0$ and thus this inequality is valid for the Hastings-McLeod solution and we are not aware of a similar type of lower bound in the literature.  Roughly speaking, we prove  Theorem \ref{thm:ineq} by showing that any solution $\yy$ to the PII equation crosses the graphs of one the two function at the right-hand side of \eqref{eq:mainineq}  from above, will be decreasing near the point of intersection and does not have time to gather enough momentum to bend back and remain positive. 

We now come to the main result which characterizes the limit of $a_{n+k}$ in the double scaling regime. 

\begin{figure}[t]
	\begin{center}
	\includegraphics[scale=.7]{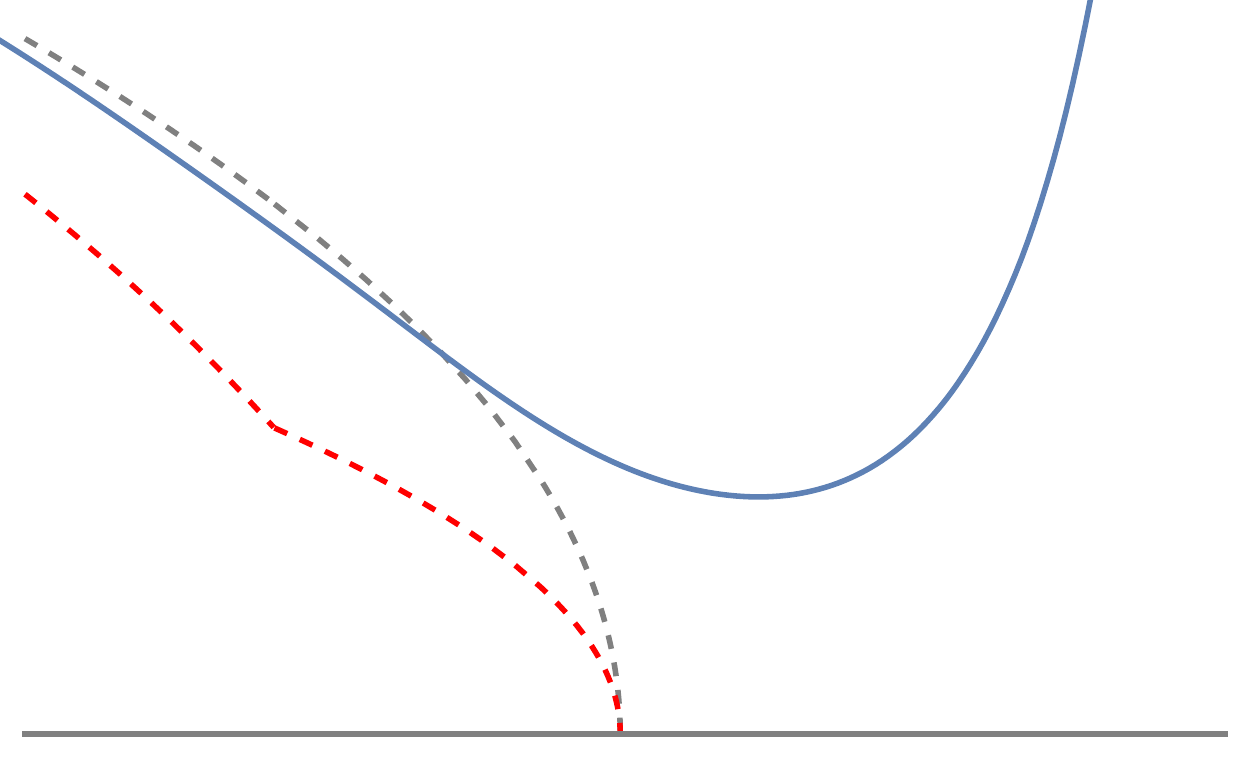}
	\end{center}
\caption{The solid line is a numerical approximation to a solution $\yy_b$ with a pole on the positive axis and asymptotic behavior $\sqrt{-x/2}$ as $x \to -\infty$. The dashed lines are the lower bound \eqref{eq:mainineq} and the curve $y=\sqrt{-x/2}$.}
	\end{figure}
\begin{theorem}
\label{thm:main}
Let $a_k \in (0,1)$ for $k=0,\ldots,n-2$ be the unique solution to \eqref{eq:dPII} with boundary conditions \eqref{eq:BC}. For $\sigma \in \mathbb R$ and set
$$
t=n- 2^{-\frac13} \sigma n^{1/3}.
$$
Then, for $x<0$, we have
	$$
	\lim_{n\to \infty, k/n^{1/3} \to x}n^{1/3} a_{n+k}=2^{\frac13}\yy_{b(\sigma)}(2^{\frac13} x+ \sigma)
	$$
where $\yy_{b(\sigma)}$ is the solution of the PII equation uniquely characterized by   \eqref{eq:boundary1pre} and having its first real pole at $x=\sigma$.
\end{theorem}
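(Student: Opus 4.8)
The plan is to turn the boundary value problem into a zero-finding problem and apply the Newton--Kantorovich theorem to a carefully engineered approximate solution. Extend $a=(a_k)_{k=0}^{n-2}$ by $a_{-1}=a_{n-1}=1$ and set $F(a)=(F_k(a))_k$ with $F_k(a)=a_{k+1}+a_{k-1}-\frac{k+\shift}{t}\frac{2a_k}{1-a_k^2}$, a smooth map on $\{\max_k|a_k|<1\}$, so that \eqref{eq:dPII}--\eqref{eq:BC} reads $F(a)=0$. By Theorem~\ref{thm:last} the zero exists, is unique and lies in $(0,1)^{n-1}$; by Theorem~\ref{thm:ybounds} there is a unique $b(\sigma)>0$ with $\sigma(b(\sigma))=\sigma$ and $\yy_{b(\sigma)}>0$ on $(-\infty,\sigma)$. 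I would construct an explicit $\hat a=\hat a^{(n)}$ with $\|F(\hat a)\|$ very small, control $DF(\hat a)$ and the local Lipschitz constant of $DF$, and conclude from \cite{Yam} that Newton's method seeded at $\hat a$ converges to a true zero $a^\ast$ with $\|a^\ast-\hat a\|\to 0$; uniqueness then forces $a^\ast=a$, and reading off the $(n+k)$-th coordinate yields the theorem.

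The approximation is built by matched asymptotics across four regimes. In the bulk I take $\hat a_k=\sqrt{1-(k+\shift)/t}$; then $1-\hat a_k^2=(k+\shift)/t$ and $F_k(\hat a)$ is only the finite-difference truncation error $O(t^{-2}(1-(k+\shift)/t)^{-3/2})$; this profile also covers the left boundary layer, where it behaves as $1-\tfrac{k+\shift}{2t}$ and keeps $1-\hat a_k^2\asymp (k+\shift)/t$. Near the right end, for $k=n+k'$ with $k'$ in a window of width $L_n$ (with $L_nn^{-1/3}\to\infty$, $L_n/n\to 0$), I take the edge profile $\hat a_{n+k'}=n^{-1/3}\y(k'n^{-1/3})$, $\y(x)=2^{1/3}\yy_{b(\sigma_n)}(2^{1/3}x+\sigma_n)$; since $\y''=2(x+2^{-1/3}\sigma_n)\y+2\y^3$ holds exactly, its discrete residual is again just a truncation error $O(n^{-4/3})$ where $\y=O(1)$. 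Here $\sigma_n=\sigma+O(n^{-1/3})$ is tuned so that the pole of $\yy_{b(\sigma_n)}$ makes $\hat a_{n-1}=1$ hold exactly --- this uses that the residue is $-1$, which is exactly what makes $n^{-1/3}\y(-jn^{-1/3})\approx 1/j$, and $\approx 1$ at $j=1$. The two profiles agree on the overlap $k\asymp n-L_n$ because $\yy_b(w)=\sqrt{-w/2}\,(1-\tfrac18(-w)^{-3}+\cdots)$ by \eqref{eq:boundary1}, so a smooth cutoff splices them. The delicate spot is the $O(1)$ indices $k=n-2,n-3,\dots$ closest to the pole, where $\y$ varies too rapidly to be sampled: there I use instead the exact discrete pole solution $\hat a_{n-j}=1/j$ of the leading recurrence $b_{j-1}+b_{j+1}=\tfrac{2b_j}{1-b_j^2}$, with lower-order corrections that solve linearized recurrences and absorb the $O(n^{-2/3})$ defect coming from $\frac{n-j+\shift}{t}=1+O(n^{-2/3})$. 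The payoff of this bookkeeping is a bound on $\|F(\hat a)\|$ that beats, by any prescribed power of $n$, the crude truncation scale.

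The Jacobian is the symmetric tridiagonal matrix $DF(\hat a)=T_0-D$, with $T_0$ the path adjacency matrix ($\|T_0\|<2$) and $D=\diag(D_{kk})$, $D_{kk}=\frac{2(k+\shift)(1+\hat a_k^2)}{t(1-\hat a_k^2)^2}$, so $-DF(\hat a)=D-T_0$ and
\[
\langle v,(D-T_0)v\rangle=\sum_k(D_{kk}-2)v_k^2+v_0^2+v_{n-2}^2+\sum_k(v_k-v_{k+1})^2\ \ge\ \min_k(D_{kk}-2)\,\|v\|^2 .
\]
In the bulk $D_{kk}-2=\tfrac{4(t-k-\shift)}{k+\shift}$; in the edge window $D_{kk}-2=2^{2/3}n^{-2/3}\big(w+6\yy_{b}(w)^2\big)+O(n^{-1})$ with $w=2^{1/3}x+\sigma_n$, and the bracket is strictly positive precisely by the lower bound $\yy_b(w)^2>-w/6$ of Theorem~\ref{thm:ineq}; since $w+6\yy_b(w)^2\to+\infty$ at both ends of $(-\infty,\sigma)$ it has a positive minimum $c_\sigma$. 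Hence $\min_k(D_{kk}-2)\gtrsim n^{-2/3}$ (the minimum being attained in the edge window), so $-DF(\hat a)\succeq c\,n^{-2/3}I$ and $\|DF(\hat a)^{-1}\|\lesssim n^{2/3}$. For the Lipschitz constant of $DF$ the only unbounded term is $(1-a_k^2)^{-3}$ near $k=0$, so I would run the argument in a weighted norm (weights $\sim (k+\shift)/t$ near the left end), or equivalently eliminate the first few stiff coordinates via the monotone maps $a_k\mapsto F_k$, keeping the effective Lipschitz constant under control. Feeding $\|DF(\hat a)^{-1}\|\lesssim n^{2/3}$, this Lipschitz constant, and the small $\|F(\hat a)\|$ into the Kantorovich criterion of \cite{Yam} verifies $\|DF(\hat a)^{-1}\|\cdot\|DF(\hat a)^{-1}F(\hat a)\|\cdot L\le\tfrac12$ for large $n$; Newton's method then converges to $a^\ast$ with $\|a^\ast-\hat a\|\le 2\|DF(\hat a)^{-1}F(\hat a)\|\to 0$ and $a^\ast\in(0,1)^{n-1}$, so $a^\ast=a$. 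Since $\hat a_{n+k}=n^{-1/3}\y(kn^{-1/3})+o(1)$ with $\sigma_n\to\sigma$ and $b(\sigma_n)\to b(\sigma)$, continuity of $\yy_b$ off its poles gives $n^{1/3}a_{n+k}\to 2^{1/3}\yy_{b(\sigma)}(2^{1/3}x+\sigma)$ for $x<0$.

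The main obstacle is the tension between accuracy and conditioning. Because $D_{kk}$ only grazes the value $2$ throughout the entire edge window --- staying strictly above it there being exactly the content of Theorem~\ref{thm:ineq} --- the operator $-DF(\hat a)$ has a spectral gap of merely order $n^{-2/3}$, so $\|F(\hat a)\|$ must be driven well below the $O(n^{-1})$ residual a naive ansatz would leave. This is what forces the fully matched construction: the $O(n^{-1/3})$ tuning of the pole location $\sigma_n$ so that $\hat a_{n-1}=1$ is met exactly, the replacement of the sampled Painlev\'e profile by the discrete pole tail $\hat a_{n-j}\approx 1/j$ on the last few indices, the corrections matching the $\frac{n-j+\shift}{t}=1+O(n^{-2/3})$ defect, and the choice of norm that tames the stiff left boundary layer at $k=0$. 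Everything after one has a sufficiently good $\hat a$ is a careful but essentially mechanical verification of the Kantorovich inequality.
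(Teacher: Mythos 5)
Your architecture is the paper's own: seed Newton's method at a Painlev\'e-based ansatz, bound the residual, bound $\|(DF)^{-1}\|$ by $O(n^{2/3})$ using exactly the lower bound $\yy_b(w)^2>-w/6$ of Theorem~\ref{thm:ineq}, and close with the Kantorovich criterion $\beta M<\tfrac12$ (the paper gets residual $O(n^{-4/3})$, inverse $O(n^{2/3})$, Lipschitz constant $O(n^{1/3})$, hence $\beta M=O(n^{-1/3})$ --- you do not need to beat the truncation scale ``by any prescribed power of $n$''). However, two of your executional choices leave gaps at precisely the points where the paper has to work hardest. First, you keep the unregularized $F_k(a)=a_{k+1}+a_{k-1}-\frac{k+\shift}{t}\frac{2a_k}{1-a_k^2}$. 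Near $k=0$ the ansatz has $1-a_k^2\asymp (k+\shift)/t$, so $\partial^2F_k/\partial a_k^2\asymp t^3/(k+\shift)^3=O(n^3)$, and the Lipschitz constant of $DF$ in any unweighted norm destroys the Kantorovich product. You gesture at weighted norms, but this is exactly why the paper instead works with $(F(a))_k=-(1-a_k^2)(\nabla H_n(a))_k$, a polynomial map, and even then needs the further diagonal conjugation by $D=\diag(\azero)$ to bring $M$ down to $O(n^{1/3})$. As written, condition (2) of the Kantorovich theorem is not verified.

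Second, the residual near the pole. You assert the sampled Painlev\'e profile has residual $O(n^{-4/3})$ ``where $\y=O(1)$'' (i.e.\ $j=n-k\gtrsim n^{1/3}$) and treat only ``$O(1)$ indices'' by the discrete pole solution. But in the intermediate range $1\ll j\ll n^{1/3}$ the second-difference truncation error of the sampled profile is of order $j^{-5}$ (the pole gives $\Y^{(4)}(x)\sim x^{-5}$), which exceeds $n^{-4/3}$ for all $j\ll n^{4/15}$, and the coefficient defect $\frac{n-j+\shift}{t}-1$ contributes $O(n^{-2/3}/j)$ there as well; neither of your two regimes covers this range. The paper's Section 6.2 handles the entire window $2\le j\le\delta n^{1/3}$ uniformly by desingularizing via $f_\omega(x)=x\Y_\omega(x)+1$, $b_k=k\azero_{n+k}+1$, with the needed cancellations built into the single global ansatz through the explicit corrections $-\frac{\omega_n}{3(k-n)n^{2/3}}-\frac1{2n}$; your ``linearized recurrence corrections'' would have to achieve the same uniformly up to $j\asymp\delta n^{1/3}$, not just for bounded $j$. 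The remaining differences (matched bulk/edge profiles versus the paper's one global formula, a quadratic-form coercivity estimate versus Varah's diagonal-dominance bound, tuning $\sigma_n$ so that the ansatz meets $a_{n-1}=1$ exactly versus absorbing the boundary term into the Region~II residual) are legitimate variants and not gaps.
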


The proof of this theorem is the topic of Section \ref{sec:proofOfMain}.

The characterization of the solution to \eqref{eq:dPII} and \eqref{eq:BC}  as  the unique minimizer  (and unique critical point) of the Hamiltonian $H$  \eqref{eq:aHamiltonian} will be central in our proof of Theorem \ref{thm:main}.  We will construct an approximation to the unique solution to the d-PII equation using  the solution $\yy_{b(\omega)}$ for the continuous PII equation. To prove that this is indeed a good approximation we will use the Kantorovich Theorem for Newtons method for finding the zero of $\nabla H$.  More precisely, we will change $\nabla H_n$ slightly and work with a function $F$ that has the same zero defined by $(F(a))_k=(1-a_k^2)(\nabla H_n(a))_k$. Newton's method for finding a zero of $F$ is the iteration
$$
a^{(k+1)}= a^{(k)}-\left(F'(a^{(k)})\right)^{-1}F(a^{(k)}), \qquad k=0,1,2,\ldots
$$
starting from a first guess $a^{(0)}$.  A key difficulty in this approach is that $F'$ can have very small eigenvalues (mainly due to the behavior in  the Painlev\'e regime) and this causes the norm  of $(F')^{-1}$ to blow up as $n \to \infty$.   To counter this effect, we will start with a very good first approximation $a^{(0)}$. To show that this is indeed good enough requires  sharp estimates on the value of $F$ at the minimizer and $F'$ from above and below in the neighborhood of the minimizer.  The estimate 
$$\yy_b(x) \geq \sqrt{-x/6},$$
from Theorem \ref{thm:ineq} is crucial in the estimating $\|(F')^{-1}\|_\infty$ from below.

\section{Proof Theorems \ref{thm:ybounds} and \ref{thm:ineq}} \label{sec:cont}
In this section we prove Theorems \ref{thm:ybounds} and \ref{thm:ineq}. 

Instead of working with the standard form of the PII equation in \eqref{eq:PII2a} we will use the form for $\y_b(x)=2^{\frac{1}{3}}\yy_b(2^{\frac{1}{3}}x)$, which satisfies
\begin{equation} \label{eq:PIIalt}
	\y_b''(x)=2x \y_b(x)+2 \y_b(x)^3
\end{equation}
with 
\begin{equation} \label{eq:familybatinfinityu}
	\y_b(x)=\y_0(x)+ 2^{\frac{1}{3}}\frac{b}{(-x)^\frac14} \exp\left(-\frac{4}{3} (-x)^{3/2}\right)(1+o(1)), \qquad x \to -\infty
\end{equation}
where $\y_0(x) =2^{\frac{1}{3}} \yy_0(2^{\frac{1}{3}}x)$ is the Hastings-McLeod solution for \eqref{eq:PIIalt}. With this notation change we can observe that $\omega(b)$ the pole of $u_b$ will be at  $x = 2^{-\frac{1}{3}}\sigma(b),$ where $\sigma(b)$ is the pole of $\yy_b$.

We make the switch to $\y_b$ in order to be consistent with the proof of the main result, Theorem \ref{thm:main}, which is quite technical, and clearer using the $\y_b$ notation. We begin by reformulating Theorems  \ref{thm:ybounds} and  \ref{thm:ineq} for $\y_b$. 

\begin{theorem}
	\label{thm:ubounds}
	For $b>0$ there exists a exists an $\omega(b)\in \mathbb{R}$ such that the solution $\y_b(x)$ is analytic on the interval $(-\infty, \omega(b))$ and has a simple pole at $\omega(b)$ with residue $-1$. Moreover
	\begin{enumerate}
		\item The map $b\mapsto \omega(b)$ is a strictly decreasing bijection from $(0,\infty)$ to $\mathbb{R}$.
		
		\item If $b >b' \ge 0$, then $\y_b(x) >\y_{b'}(x)$ for all $x\in (-\infty, \omega(b))$.	
	\end{enumerate}
\end{theorem}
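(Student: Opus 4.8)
The plan is to reduce Theorem \ref{thm:ubounds} to Theorem \ref{thm:ineq} (in its $\y_b$ form) together with standard ODE comparison arguments. First I would set up the comparison machinery: for two solutions $\y_b$ and $\y_{b'}$ with $b>b'\ge 0$, write $w=\y_b-\y_{b'}$. From \eqref{eq:PIIalt}, $w$ satisfies the linear ODE $w''=(2x+2(\y_b^2+\y_b\y_{b'}+\y_{b'}^2))\,w$. On any interval where both solutions are analytic and the coefficient $q(x):=2x+2(\y_b^2+\y_b\y_{b'}+\y_{b'}^2)$ is positive, $w$ cannot have an interior positive maximum nor an interior negative minimum; hence once $w>0$ and $w$ is, say, non-decreasing it stays positive. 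The asymptotics \eqref{eq:familybatinfinityu} show that $w(x)\to 0^+$ with $w'(x)\to 0^+$ as $x\to-\infty$ (the exponential prefactor is positive since $b>b'$), so $w>0$ near $-\infty$. The lower bound of Theorem \ref{thm:ineq}, namely $\y_b(x),\y_{b'}(x)\ge\sqrt{-x/6}$ for $x\le\min(\omega(b),0)$ — note $2\cdot(-x/6)\cdot 3 = -x$ — gives exactly $q(x)\ge 2x+2(-x)=0$ on $(-\infty,0]$, and in fact $q(x)>0$ strictly there, so $w$ stays positive on $(-\infty,\min(\omega(b'),0)]$. For $x\in[0,\omega(b))$ one uses instead that $\y_b>0$ there (again from Theorem \ref{thm:ineq}, extended past $0$ by the fact that a positive solution bounded below near $0$ cannot cross zero before its pole) so $q(x)>0$ automatically, and $w>0$ persists up to the first pole. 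This proves part 2, and in particular $\y_b>\y_{b'}$ forces $\omega(b)\le\omega(b')$; strict monotonicity of $b\mapsto\omega(b)$ then follows because if $\omega(b)=\omega(b')$ were finite the two solutions would both blow up like $-1/(x-\omega)$ there, contradicting $w>0$ with $w$ having a definite sign of its singular part.

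Next I would establish that $\omega(b)$ is finite for every $b>0$ and that $\y_b$ has there a simple pole of residue $-1$. Finiteness: by Theorem \ref{thm:ineq}, $\y_b(x)\ge\sqrt{-x/6}$ grows like $\sqrt{-x/6}$ as $x\to-\infty$, so $\y_b$ is large and positive; I would show that a positive solution of \eqref{eq:PIIalt} that is bounded below by a positive constant on some interval must blow up in finite time, by a Riccati/energy comparison — e.g. multiplying by $\y_b'$ gives an energy identity $\frac12(\y_b')^2 = \text{(const)} + \int (2x\y_b+2\y_b^3)\y_b'\,dx$, and once $x>0$ the right side forces $|\y_b'|$ to grow at least like $\y_b^2$, yielding a finite-time singularity of the form $\y_b\sim c/(x-\omega)$; plugging this into \eqref{eq:PIIalt} and matching the most singular terms, $2c/(x-\omega)^3$ on the left must equal $2c^3/(x-\omega)^3$ on the right, so $c^2=1$, and $c=-1$ is forced because $\y_b$ is positive and increasing to $+\infty$ just before the pole (a residue $+1$ pole would approach from $-\infty$). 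That $\y_b$ stays positive and finite on all of $(-\infty,\omega(b))$ again uses Theorem \ref{thm:ineq} on $(-\infty,0]$ and a continuation argument on $[0,\omega(b))$.

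Finally, for part 1 I would prove $b\mapsto\omega(b)$ is continuous and surjective onto $\mathbb R$. Continuity follows from continuous dependence of solutions of \eqref{eq:PIIalt} on the parameter $b$ (which enters only through the data at $-\infty$) together with the fact, standard for Painlevé equations, that pole locations depend analytically on the monodromy data / initial data away from coalescences; combined with the strict monotonicity from part 2, $\omega$ is then a strictly decreasing homeomorphism onto its image. For the endpoints: as $b\to 0^+$, $\y_b\to\y_0$, the Hastings-McLeod solution, which has no real pole, so $\omega(b)\to+\infty$; as $b\to+\infty$ the exponentially small correction in \eqref{eq:familybatinfinityu} ceases to be small at any fixed $x$, forcing the first pole to move to $-\infty$, i.e. $\omega(b)\to-\infty$ — this I would make rigorous by a scaling/comparison argument showing $\omega(b)$ cannot stay bounded below as $b\to\infty$ (if $\omega(b)\ge -M$ along a sequence, the uniform lower bound of Theorem \ref{thm:ineq} plus the growing initial data at a fixed reference point would be incompatible). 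The main obstacle I anticipate is precisely this surjectivity at $b\to\infty$ and the quantitative control of where the first pole sits; the monotonicity and the behavior near $-\infty$ are comparatively routine once Theorem \ref{thm:ineq} is in hand, and indeed the whole argument is essentially a corollary of Theorem \ref{thm:ineq}, which is where the real work lies.
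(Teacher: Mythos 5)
Your comparison argument for part 2 is in substance the paper's: the paper also writes the difference of two solutions via a (double-integrated) linearization whose coefficient is $\y_{b}^2+\y_{b}\y_{b'}+\y_{b'}^2+s$, and uses the lower bound $\y>\sqrt{-x/3}$ to make that coefficient positive. Two corrections there. First, a normalization slip: in the $\y$-variables the equation is \eqref{eq:PIIalt} and the relevant bound from Theorem \ref{thm:inequ} is $\y\ge\sqrt{-x/3}$ (not $\sqrt{-x/6}$, which belongs to the $\yy$-normalization); with $\sqrt{-x/6}$ your coefficient $q$ only satisfies $q\ge x$, which is negative. Second, and more importantly, your reduction is circular as a matter of logical order: in the paper the inequality $\y_b>\sqrt{-x/3}$ for $b>0$ is itself \emph{deduced from} the monotonicity statement you are trying to prove. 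The fix is the bootstrap the paper uses: prove the bound directly for $b=0$ (the Hastings--McLeod solution, Lemma \ref{lem:keyineq}), run the comparison against $b'=0$ to get $\y_b>\y_0>\sqrt{-x/3}$ for all $b>0$, and only then run the comparison for general $b>b'>0$.

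The genuine gap is your argument that $\omega(b)<+\infty$ for $b>0$. The claim that ``a positive solution bounded below by a positive constant on some interval must blow up in finite time'' is false as stated and, more to the point, does not distinguish $\y_b$ ($b>0$) from the Hastings--McLeod solution $\y_0$, which is positive, convex for $x>0$, and simply decays to $0$ without ever blowing up. Nothing in your energy/Riccati sketch rules out that $\y_b$ likewise decays at $+\infty$; excluding that requires a connection-type input linking the $-\infty$ data (the parameter $b$) to the $+\infty$ behavior. The paper gets this from the known asymptotic formula \eqref{eq:stokesplusinf}, which shows that every real $\yy_b$ with $b\neq0$ has infinitely many real poles; an alternative would be to combine your convexity argument with the full Hastings--McLeod uniqueness theorem (the only solution with behavior \eqref{eq:boundary1pre} at $-\infty$ that decays at $+\infty$ is $\y_0$), but some such external input is unavoidable. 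Your residue computation ($c=c^3$, $c=-1$ by positivity) is fine. Finally, for surjectivity your treatment of $b\to\infty$ is only a heuristic; the paper's Lemma \ref{lem:btoinfty} argues that if $\omega(b)$ stayed bounded below, the monotone limit $\y_\infty=\lim_{b\to\infty}\y_b$ would satisfy the integrated PII equation and hence be a bona fide solution, contradicting the fact that the family is exhausted by finite Stokes parameters --- a concrete mechanism your proposal does not supply.
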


\begin{theorem}
	\label{thm:inequ}
	Let $b \geq 0$. Then we have
	\begin{equation} \label{eq:maininequ}
		\y_b(x) > \max \left(\sqrt{-x}\sqrt{\frac{\sqrt{1-2x^3}-1}{\sqrt{1-2x^3}+1}},\sqrt{-x/3}\right),
	\end{equation}
	for  $x \leq \min(\omega(b),0)$.
\end{theorem}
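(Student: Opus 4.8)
The plan is to prove the two strict inequalities $\y_b(x)>g_1(x)$ and $\y_b(x)>g_2(x)$ separately on $(-\infty,\min(\omega(b),0)]$, where $g_1(x)=\sqrt{-x}\,\bigl((\sqrt{1-2x^3}-1)/(\sqrt{1-2x^3}+1)\bigr)^{1/2}$ and $g_2(x)=\sqrt{-x/3}$; since the two graphs meet exactly once on $(-\infty,0)$, namely at $x_\ast=-(3/2)^{1/3}$, the function $\max(g_1,g_2)$ coincides with one of them on each side, and this gives the theorem. First I would record the elementary facts: on $(-\infty,0)$ both barriers are positive, strictly decreasing and lie strictly below $\sqrt{-x}$; $g_1(0)=g_2(0)=0$; $g_1>g_2$ as $x\to-\infty$; and $g_1$ satisfies the algebraic identity $xg_1^2-\sqrt2\,g_1+x^2=0$, so that $g_1^2+x=\sqrt2\,g_1/x<0$ and $g_1'=(g_1^2+2x)/(\sqrt2-2xg_1)$. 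From these one can compute the sign of $g_i'-g_i^2-x$ explicitly; it is positive for $x$ sufficiently negative (for each $i$), which is the property that matters below.

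The key tool is the auxiliary quantity $\Delta(x)=\y_b'(x)-\y_b(x)^2-x$. Using $\y_b''=2x\y_b+2\y_b^3$ one checks the linear identity $\Delta'=-2\y_b\,\Delta-1$. Since $\Delta'=-1<0$ wherever $\Delta=0$, the set $\{\Delta<0\}$ is forward invariant; and transferring \eqref{eq:boundary1} to $\y_b$ via $\y_b(x)=2^{1/3}\yy_b(2^{1/3}x)$ (using also \eqref{eq:familybatinfinityu} to see that the $b$-dependence is super-exponentially small) gives $\y_b(x)=\sqrt{-x}-\tfrac1{16}(-x)^{-5/2}+O((-x)^{-11/2})$, hence $\Delta(x)=-\tfrac1{2\sqrt{-x}}\,(1+o(1))<0$ as $x\to-\infty$. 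Therefore $\Delta<0$ on the whole interval of analyticity $(-\infty,\omega(b))$, i.e. $\y_b'<\y_b^2+x$ there; in particular $\y_b$ is strictly decreasing at every point where $\y_b^2<-x$. The same asymptotics give the strict inequality at $-\infty$: $\y_b(x)-g_1(x)=\tfrac1{\sqrt2}\,(-x)^{-1}(1+o(1))>0$ for $x$ sufficiently negative, and hence also $\y_b>g_2$ there.

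Now suppose the conclusion fails and let $x_0\le\min(\omega(b),0)$ be the smallest point at which $\y_b$ meets $\max(g_1,g_2)$; write $g$ for the barrier realising the maximum at $x_0$. Then $\y_b>\max(g_1,g_2)$ on $(-\infty,x_0)$, $\y_b(x_0)=g(x_0)\in(0,\sqrt{-x_0})$, and $\y_b'(x_0)\le g'(x_0)<0$, while $\y_b(x_0)^2<-x_0$ forces $\y_b''(x_0)=2\y_b(x_0)(x_0+\y_b(x_0)^2)<0$. From here I would argue that $\y_b$ is pushed down and cannot recover. First, using $g'\ge g^2+x$ on the relevant range together with $\y_b'<\y_b^2+x$, one gets $(g-\y_b)'>(g-\y_b)(g+\y_b)$, so $g-\y_b$ increases once it is positive; hence $\y_b$ stays strictly below $g<\sqrt{-x}$, remains concave, and (by the previous paragraph) strictly decreasing, so it reaches a zero at some finite $x_1>x_0$ and turns negative. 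Second, for $x>x_1$, as long as $\y_b^2<-x$ the solution is convex (since $\y_b<0$ and $x+\y_b^2<0$) and still strictly decreasing; quantifying the ``momentum'' $\y_b'(x_1)<0$ via $\Delta$ and the preceding concave descent, $\y_b$ drops below $-\sqrt{-x}$ at a finite $x_2$, after which it is concave and decreasing and is therefore driven to $-\infty$ at a finite $x_3$. But $\y_b$ is analytic on $(-\infty,x_3)$ while $\omega(b)$ is its first pole, so $\omega(b)=x_3$ and the residue there is $+1$: for $b=0$ this contradicts that the Hastings--McLeod solution $\y_0$ has no real poles, and for $b>0$ it contradicts that the first real pole of $\y_b$ has residue $-1$ (equivalently, one first proves the case $b=0$ and bootstraps to $b>0$ using the monotonicity $\y_b>\y_0$ of Theorem \ref{thm:ubounds}). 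This contradiction proves $\y_b>g_1$ and $\y_b>g_2$, hence Theorem \ref{thm:inequ}.

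I expect the main obstacle to be the trapping step and the ``no bending back'' step in the last paragraph — this is also why two explicit barriers are needed rather than one. The trapping inequality $g'-g^2-x>0$ degrades near $x=0$ (for $g_2$ it reverses around $x\approx-0.57$), so in that regime one must instead observe that $\y_b$ has already climbed above $\sqrt{-x}$, which makes the bound automatic, before it could enter the bad range; coordinating this with the switch between $g_1$ and $g_2$ at $x_\ast$ is delicate. Likewise, establishing that the negative excursion after $x_1$ really reaches the line $y=-\sqrt{-x}$ (rather than turning around) requires a genuine lower bound on $|\y_b'(x_1)|$, obtained from $\Delta$ and the concavity of $\y_b$ on $(x_0,x_1)$ — precisely the kind of quantitative lower bound, of the same flavor as $\y_b\ge\sqrt{-x/3}$, that the theorem is ultimately providing. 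Making these comparisons airtight, with the explicit shapes of $g_1$ and $g_2$, is the bulk of the work.
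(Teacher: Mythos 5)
Your preliminaries are sound: the Riccati quantity $\Delta=\y_b'-\y_b^2-x$ with $\Delta'=-2\y_b\Delta-1$ is a clean repackaging of the paper's integrated identity (Proposition \ref{prop:ysatisfies}) and derivative bound (Lemma \ref{cor:yPrimeBnd}), and your treatment of the first barrier $g_1$ --- crossing point, slope bound, trapping below a line --- is essentially the paper's Theorem \ref{thm:lowerBnd1}, where the decisive geometric fact is that the trapping line through $(x_0,g_1(x_0))$ with slope $g_1(x_0)^2+x_0$ passes through the origin, so the solution is forced to a zero before $x=0$.

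The genuine gap is in the $\sqrt{-x/3}$ step. For that barrier the analogous line through $(x_0,\sqrt{-x_0/3})$ with slope $-1/(2\sqrt{-3x_0})$ reaches zero only at $x=-x_0>0$, outside the region where concavity holds; so ``remains concave and decreasing, hence reaches a zero at finite $x_1$ and turns negative'' does not follow --- the solution can reach $x=0$ still positive, become convex there, and bend back up (this is exactly what the Hastings--McLeod solution does after it crosses $y=\sqrt{-x}$). Your fallback for the range where $g_2'-g_2^2-x<0$ (that $\y_b$ ``has already climbed above $\sqrt{-x}$'') is unjustified: at a hypothetical crossing of $g_2$ the solution is still in region I, below $\sqrt{-x}$, and nothing places its crossing of $\sqrt{-x}$ before $x\approx-0.57$. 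The paper closes precisely this hole in Lemma \ref{lem:keyineq}: it launches a comparison solution $\y_*$ from the intersection $(x_1,\sqrt{-x_1})$ of the tangent line with $y=\sqrt{-x}$, shows $\y_*\ge\y_0$ forward in $x$, and bounds the double integral of $\y_*(\y_*^2+s)$ explicitly to prove $\y_*(-2x_0)<0$ at a \emph{positive} abscissa, contradicting positivity of the Hastings--McLeod solution on all of $\mathbb R$ --- not merely its behavior on the negative axis. This quantitative ``not enough momentum to bend back'' estimate is the heart of the theorem, and your proposal explicitly defers it, so as written the proof is incomplete. The $b>0$ bootstrap via monotonicity is fine and matches the paper, but mind the order: monotonicity (Theorem \ref{thm:monotonicity}) itself uses $\y_0>\sqrt{-x/3}$, so you cannot invoke residue $-1$ at $\omega(b)$ for $b>0$ before that chain is in place.
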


Proving these is equivalent to proving Theorems \ref{thm:ybounds} and  \ref{thm:ineq}.

The proof will follow the following steps:
\begin{enumerate}
	\item Prove several preliminary results on the behavior of $\y_b$.
	\item Prove that $\y_b(x) \ge \sqrt{-x}\sqrt{\frac{\sqrt{1-2x^3}-1}{\sqrt{1-2x^3}+1}}$ for $x< \min(\omega(b),0)$.
	\item Prove that Theorem \ref{thm:inequ} holds for the Hastings-McLeod solution $\y_0$.
	\item Prove Theorem \ref{thm:ubounds}, which in turn implies that Theorem \ref{thm:inequ} holds for all $b> 0$.
\end{enumerate}

\subsection{Some preliminaries}

When studying the behavior of $\y_b$ it is natural to  divide the plane in four regions separated by the curves $y=0$ and $x+y^2=0$. Indeed, if $(x,\y_b(x))$ is a point on one of these curves then $u_b''(x)=0$. If $(x,\y_b(x))$ is in region I or  III then $\y_b''(x)>0$ and $\y_b''(x)<0$ in regions II and IV. See also Figure \ref{fig:fourregions}.

\begin{figure}[t]
	\begin{center}
		\begin{tikzpicture}[domain=0:2, scale = 1]
			\draw[black, line width = 0.50mm]   plot[smooth,domain=-2:2] ( {-(\x)^2},\x);
			\draw[black, line width = 0.50mm]   (-4,0)--(4,0);
			\draw (-3,1) node {I};
			\draw (1,1.5) node {II};
			\draw (-3,-1) node {IV};
			\draw (1,-1.5) node {III};
		\end{tikzpicture}
		\caption{The curves $y^2+x=0$ and $y=0$  divide the plane in four regions. Any solution  $\y$ to \eqref{eq:PIIalt} is convex when $(x,\y(x))$ in region $II$ and $IV$, and concave when it is in $I$ and $III$. } \label{fig:fourregions}
	\end{center}
\end{figure}
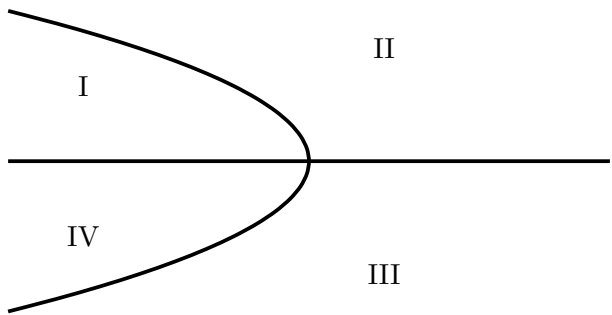

We start can now observe that after a solution $\y_b$  enters region II then it will never enter region I again.  

\begin{figure}[t]
	\begin{center}
		\begin{tikzpicture}
			\begin{axis}[
				axis equal image,
				xmin = -3,
				xmax = 0.5,
				ymin = -0.6,
				ymax = 2.5,
				inner axis line style={<->, thick},
				xlabel={\large $x$},
				ylabel={\large $y$},
				axis lines=middle,
				ticks = none,
				]
				\addplot[dashed, blue, domain = -3:0, samples = 50]  {sqrt(-x)} node[pos=0.1, below] {$\sqrt{-x}$};
				\addplot[ thick, black, domain = -3:0] {-(1/2)*(x+1)+1} node[pos=0.1, above] {$k$};
				\draw[very thick, teal, <->] (axis cs: -2.1,2.2) .. controls (axis cs: -2,2) and (axis cs: -1.3, 1.2).. (axis cs: -1, 1) .. controls (axis cs: -0.7, 0.75) and (axis cs: -0.5, 0.6) .. (axis cs:-0.2,0.2) node[pos = 0.9, left] {$\y_b$};
				\node[label={0:{\footnotesize $ (x_0, \sqrt{-x_0})$}},circle,fill,inner sep=2pt] at (axis cs:-1,1) {};		
			\end{axis}
		\end{tikzpicture}
		\caption{The dashed curve represents $y=\sqrt{-x}$ and the solid straight line is  part of the tangent line to -$y=\sqrt{-x}$ at the point where the solution $\y_b$ crosses (or touches) the curve $y=\sqrt{-x}$ from above. Since any solution of the PII equation is convex in region $II$, the graph of $\y_b$ left of the intersection point remains above the tangent line (until it hits a pole). Since this is in contradiction with the asymptotic behavior \eqref{eq:boundary1}, no $\y_b$ can enter region $I$ from region $II$. } \label{fig:lem1}
	\end{center}
\end{figure}
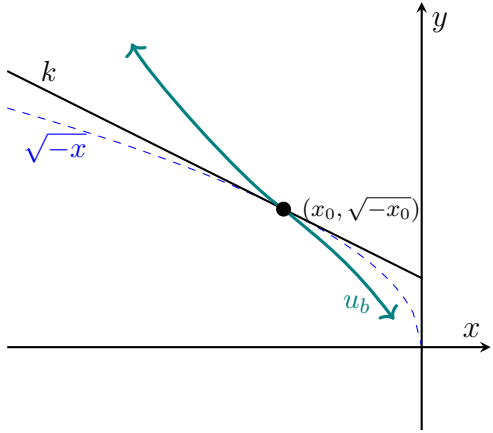
\begin{lemma}
	If $\y_b(x_0)>\sqrt{-x_0},$ for some $x_0 \in (-\infty, \omega(b))$,  then $\y_b(x) > \sqrt{-x}$ for all $x \in (x_0,\min (\omega(b),0).$
\end{lemma}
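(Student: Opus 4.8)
The plan is to argue by contradiction, producing an interval on which the graph of $\y_b$ is trapped in region II of Figure~\ref{fig:fourregions} (where solutions of \eqref{eq:PIIalt} are convex), and then to play that convexity off against the concavity of the boundary curve $y=\sqrt{-x}$. First I would dispose of the trivial cases: if $x_0\ge\min(\omega(b),0)$ there is nothing to prove, and if $\y_b(x)\neq\sqrt{-x}$ for every $x\in(x_0,\min(\omega(b),0))$ then, since $\y_b(x_0)>\sqrt{-x_0}$ and $\y_b$ is continuous on $(-\infty,\omega(b))$, the intermediate value theorem already gives $\y_b(x)>\sqrt{-x}$ throughout. So I may assume $\y_b(x_1)=\sqrt{-x_1}$ for some $x_1\in(x_0,\min(\omega(b),0))$, and I take $x_1$ to be the smallest such value; then $\y_b>\sqrt{-x}$ on $[x_0,x_1)$.

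Next I would use the prescribed behaviour at $-\infty$ to manufacture a matching crossing to the left of $x_0$. By \eqref{eq:boundary1} (equivalently \eqref{eq:familybatinfinityu}), $\y_b(x)=\sqrt{-x}\,(1+\mathcal O((-x)^{-3}))$, with the correction making $\y_b(x)<\sqrt{-x}$ for all sufficiently negative $x$; together with $\y_b(x_0)>\sqrt{-x_0}$ and continuity this produces a last crossing $x_2:=\sup\{x<x_0:\y_b(x)\le\sqrt{-x}\}$, with $x_2<x_0$, $\y_b(x_2)=\sqrt{-x_2}$, and $\y_b>\sqrt{-x}$ on $(x_2,x_0]$. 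On the whole interval $(x_2,x_1)$ we then have $x<\min(\omega(b),0)\le 0$, hence $\sqrt{-x}>0$, so $\y_b(x)>\sqrt{-x}>0$ and $x+\y_b(x)^2>0$: the graph lies in region II, and $\y_b''=2\y_b(x+\y_b^2)>0$ there. Since $x_1<\omega(b)$ there is no pole in $[x_2,x_1]$, so $\y_b$ is (strictly) convex on $[x_2,x_1]$.

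To finish I would compare with the chord joining $(x_2,\sqrt{-x_2})$ and $(x_1,\sqrt{-x_1})$. Because $x\mapsto\sqrt{-x}$ is strictly concave on $(-\infty,0]$ and $\y_b$ is strictly convex on $[x_2,x_1]$, while the two functions agree at both endpoints, the convex graph lies below, and the concave one above, their common chord; hence $\y_b(x)<\sqrt{-x}$ for $x\in(x_2,x_1)$, contradicting $\y_b>\sqrt{-x}$ there. The only genuinely non-routine point is arranging for $\y_b$ to be confined to region II over a \emph{full} interval bounded by two crossings of $y=\sqrt{-x}$ — this is exactly what makes the convexity/chord comparison applicable — and that is the role of the asymptotics \eqref{eq:boundary1} at $-\infty$: it rules out the a priori possibility that $\y_b$ remains above $\sqrt{-x}$ all the way down to $-\infty$ (where it could, in principle, even grow linearly), which would leave no left crossing $x_2$ to anchor the argument.
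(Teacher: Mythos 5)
Your proof is correct, and it reaches the conclusion by a mechanism that is genuinely different from the paper's. The paper assumes a single crossing $\xi>x_0$, notes that $\y_b'(\xi)\le -\tfrac{1}{2\sqrt{-\xi}}$, and uses convexity in region II to propagate a \emph{linear} lower bound (the line $k$ through $(\xi,\sqrt{-\xi})$ with slope $\y_b'(\xi)$) backwards over all of $(-\infty,\xi)$; the contradiction is then with the growth rate at $-\infty$, since a line of negative slope eventually exceeds $\sqrt{-x}$. You instead trap the solution between \emph{two} crossings $x_2<x_0<x_1$ and obtain the contradiction on the compact interval $[x_2,x_1]$ from the elementary dichotomy ``strictly convex lies below its chord, strictly concave lies above it,'' the two graphs sharing both endpoints. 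Your route is self-contained and avoids any comparison at infinity, but it pays for this by needing slightly finer input from the asymptotics: to manufacture the left crossing $x_2$ you must know that $\y_b(x)<\sqrt{-x}$ for $x$ sufficiently negative, i.e.\ the \emph{sign} of the $-\tfrac18(-x)^{-3}$ correction in \eqref{eq:boundary1} (equivalently, that the solution starts in region I, which the paper also asserts). The paper's tangent-line version only uses the leading-order statement $\y_b(x)=\sqrt{-x}\,(1+o(1))$. Both hinge on the same convexity observation in region II, so the difference is one of packaging rather than of underlying ideas, but your chord argument is the tidier of the two and I see no gap in it.
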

\begin{proof}
	Assume that $\y_b(x)$ does cross, or touches, the curve $y = \sqrt{-x}$ at some point $\xi > x_0$. That is, there exists a $\xi > x_0$ such that $\y_b(\xi) = \sqrt{-\xi}$ (and we take $\xi$ to be the first time after $x_0$ this happens).  It follows that we must have $\y_b'(\xi) \le -\frac{1}{2 \sqrt{-\xi}}$ since the slope must be at least as steep as the slope of $y = \sqrt{-x}$. We will show that for $x\in (-\infty,\xi)$, the solution $\y_b(x)$ will be above the line 
	\[
	k: y = \y'_b(\xi)(x-\xi) + \sqrt{-\xi} 
	\]
	which passes through the point $(\xi, \sqrt{-\xi})$ with slope $\y'_b(\xi)$. Since $\y_b'(\xi) \le -\frac{1}{2 \sqrt{-\xi}}$ any such solution $\y_b(x)$ will not satisfy the boundary condition given in \eqref{eq:boundary1}, giving us a contradiction.
	
	Notice that if $\y_b(x) > \sqrt{-x},$ then from \eqref{eq:PIIalt} it follows that $\y_b''(x)>0.$ This means that $\y_b'(x)$ is increasing on $(x_0,\xi),$ so $\y_b(x)$ must be above the line that passes through the point $(\xi, \sqrt{-\xi})$ with slope $\y_b'(\xi)$. This can in fact be extended to the entire interval $(-\infty,\xi)$ by observing that if $\y_b(x)$ is above this line it is also above $\sqrt{-x},$ meaning that $\y_b''(x)>0$. Therefore it follows that once a solution $\y_b(x)$ to \eqref{eq:PII2} with boundary condition \eqref{eq:boundary1} is above the curve $y=\sqrt{-x}$ it will remain above this curve. 
	
	See also Figure \ref{fig:lem1} for an illustration.
\end{proof}

A crucial point in our methods for proving Theorems \ref{thm:ubounds} and \ref{thm:inequ}  is the following representation for $\y_b$:

\begin{proposition}
	\label{prop:ysatisfies}
	For $b\in \mathbb{R}$ the function $\y_b(x)$ satisfies 
	\begin{equation}\label{eq:integratedform}
		\big(\y_b'(x)\big)^2 + 2\int_{-\infty}^x \big( \y_b^2(s) + s\big) ds = \big( \y_b^2(x) + x\big)^2
	\end{equation}
	for $x\in (-\infty, \omega(b))$.
\end{proposition}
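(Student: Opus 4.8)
The plan is to recognize \eqref{eq:integratedform} as the integrated form of a first integral of \eqref{eq:PIIalt} and then to verify that the boundary contribution at $-\infty$ vanishes. First I would set
$$E(x) := \big(\y_b'(x)\big)^2 - \big(\y_b^2(x)+x\big)^2,$$
and compute, using $\y_b'' = 2\y_b(\y_b^2+x)$ from \eqref{eq:PIIalt}, that
$$E'(x) = 2\y_b'\y_b'' - 2\big(\y_b^2+x\big)\big(2\y_b\y_b'+1\big) = -2\big(\y_b^2(x)+x\big),$$
the cubic contributions cancelling. Thus \eqref{eq:integratedform} is equivalent to the assertion $E(x) = \int_{-\infty}^x E'(s)\,ds$, and the whole argument reduces to showing that the integral $\int_{-\infty}^x\big(\y_b^2(s)+s\big)\,ds$ converges and that $E(s)\to 0$ as $s\to-\infty$.

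For the decay I would use the behavior of $\y_b$ at $-\infty$. By \eqref{eq:familybatinfinityu} the difference $\y_b-\y_0$ is superexponentially small, so every $\y_b$ (in particular $\y_0$ itself) inherits the full power-series expansion coming from \eqref{eq:boundary1}; rescaled to the $\y$-normalization this reads $\y_b(x) = \sqrt{-x}\,\big(1+\mathcal{O}((-x)^{-3})\big)$ as $x\to-\infty$. Consequently $\y_b^2(x)+x = \mathcal{O}((-x)^{-2})$, which is integrable near $-\infty$, so $\int_{-\infty}^x\big(\y_b^2(s)+s\big)\,ds$ is finite and $\big(\y_b^2(x)+x\big)^2 = \mathcal{O}((-x)^{-4})\to 0$.

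The only delicate point, which I expect to be the main obstacle, is to show $\y_b'(x)^2\to 0$ as $x\to-\infty$. I would handle this via the equation itself: $\y_b''(x) = 2\y_b(x)\big(\y_b^2(x)+x\big) = \mathcal{O}((-x)^{1/2})\cdot\mathcal{O}((-x)^{-2}) = \mathcal{O}((-x)^{-3/2})$, so $\y_b''$ is absolutely integrable on $(-\infty,x_0]$ for any fixed $x_0<\omega(b)$; hence $\lim_{x\to-\infty}\y_b'(x)$ exists. If this limit were a nonzero constant $c$, then $\y_b(x)/x\to c$ (applying l'H\^opital's rule to $\y_b(x)/x$), contradicting $\y_b(x)=\sqrt{-x}(1+o(1))=o(|x|)$; so the limit is $0$ and $\y_b'(x)^2\to 0$. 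Combining the pieces, $E(x)\to 0$ as $x\to-\infty$, and integrating $E'(s)=-2\big(\y_b^2(s)+s\big)$ from $-\infty$ to $x$ gives $\big(\y_b'(x)\big)^2 - \big(\y_b^2(x)+x\big)^2 = -2\int_{-\infty}^x\big(\y_b^2(s)+s\big)\,ds$, which is \eqref{eq:integratedform}.
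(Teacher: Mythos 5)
Your proof is correct and takes essentially the same route as the paper's: both recognize \eqref{eq:integratedform} as the integrated form of the first integral obtained by multiplying \eqref{eq:PIIalt} by $2\y_b'$ (so that $E'(x)=-2(\y_b^2(x)+x)$), and both fix the integration constant by letting $x\to-\infty$ using the asymptotics \eqref{eq:boundary1}. Your verification that $\y_b'(x)^2\to 0$ via integrability of $\y_b''$ is in fact more detailed than the paper's, which simply takes the limit on both sides.
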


\begin{proof}
	Start with \eqref{eq:PII2} and multiply it with $2\y'_b$:
	$$
	2 \y_b'(x)	\y''_b (x)= 4 \y'_b(x) \y_b(x)^3+4x  \y_b'(x)\y_b(x).
	$$
	Now add $2\y_b(x)^2+2x$ to both sides:
	$$
	2 \y_b'(x)	\y''_b (x)+2 \y_b(x)^2+2x= 4 \y'_b(x) \y_b(x)^3+4x  \y_b'(x)y_b(x) +2\y_b(x)^2+2x.
	$$
	Observe that because of the asymptotic behavior at $ -\infty$, we in fact have that $\y_b(x)^2+x$ is integrable at $-\infty$ and thus we can integrate
	$$
	(\y_b'(x))^2+2 \int_{-\infty}^x (\y_b(s)^2+s) ds= (\y_b(x))^4+2x \y_b(x)^2+x^2+c
	$$
	where $c$ is an integration constant. In fact, by taking the limit $x\to -\infty$ at both sides we find $c=0$ and this proves the statement. 
\end{proof}

From the asymptotic behavior for  $x \to -\infty$  we see that $\y_b(x)$ starts in region $I$. Eventually, the solution $\y_b(x)$ will leave this region (at the latest at $x=0$). Let $x_*(b)$ be the first time that $\y_b(x)$ leaves this region.

\begin{lemma}
	\label{cor:yPrimeBnd}
	Let $x_*(b)$ to be the first time that $\y_b(x)$ intersects $y=0$ or $y^2+x=0$, then
	\[
	\y'_b < \y_b^2(x)+ x \quad \text{ for } x\in (-\infty, x_*(b)). 
	\]
\end{lemma}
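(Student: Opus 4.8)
The plan is to read the inequality off the ``integrated form'' of the equation in Proposition \ref{prop:ysatisfies} together with the sign information that is available while $\y_b$ is in region I. Write $I_b=(-\infty,x_*(b))$; by the definition of $x_*(b)$ the point $(s,\y_b(s))$ lies in region I for every $s\in I_b$, so $\y_b(s)>0$ and $\y_b^2(s)+s<0$ on all of $I_b$.

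First I would record that $\y_b'<0$ on $I_b$. Since in region I we have $\y_b(x)>0$ and $x+\y_b^2(x)<0$, equation \eqref{eq:PIIalt} gives $\y_b''(x)=2\y_b(x)\big(x+\y_b^2(x)\big)<0$, so $\y_b'$ is strictly decreasing on $I_b$. If $\y_b'(x_1)\ge 0$ for some $x_1\in I_b$, then $\y_b'>0$ on $(-\infty,x_1)$, so $\y_b$ is increasing there, contradicting $\y_b(x)=\sqrt{-x}\,(1+o(1))\to+\infty$ as $x\to-\infty$ (which follows from \eqref{eq:boundary1}, equivalently \eqref{eq:familybatinfinityu}). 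Hence $\y_b'<0$ throughout $I_b$.

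Next, for $x\in I_b$ the integrand in Proposition \ref{prop:ysatisfies} satisfies $\y_b^2(s)+s<0$ for every $s\le x$, so $\int_{-\infty}^x\big(\y_b^2(s)+s\big)\,ds<0$ and therefore
\[
\big(\y_b'(x)\big)^2=\big(\y_b^2(x)+x\big)^2-2\int_{-\infty}^x\big(\y_b^2(s)+s\big)\,ds>\big(\y_b^2(x)+x\big)^2 .
\]
Thus $|\y_b'(x)|>|\y_b^2(x)+x|=-(\y_b^2(x)+x)$, using $\y_b^2(x)+x<0$ in the last step. Since $\y_b'(x)<0$ we have $|\y_b'(x)|=-\y_b'(x)$, so this becomes $-\y_b'(x)>-(\y_b^2(x)+x)$, i.e.\ $\y_b'(x)<\y_b^2(x)+x$, which is the claim.

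I do not expect a real obstacle: this is essentially a corollary of Proposition \ref{prop:ysatisfies}. The one point requiring care is the choice of branch of the square root — the identity only yields $|\y_b'|>|\y_b^2+x|$, and one must exclude the branch $\y_b'>-(\y_b^2+x)>0$; above this is handled by first proving $\y_b'<0$ on $I_b$. An alternative is to note that $\y_b'(x)-(\y_b^2(x)+x)$ is continuous and, by the displayed strict inequality, nowhere zero on the interval $I_b$, hence of constant sign, and then to fix that sign near $-\infty$ by differentiating the asymptotics \eqref{eq:boundary1} (which gives $\y_b'(x)\sim-\tfrac12(-x)^{-1/2}$, while $\y_b^2(x)+x=O((-x)^{-2})$).
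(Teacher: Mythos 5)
Your proof is correct and follows essentially the same route as the paper: both rest on Proposition \ref{prop:ysatisfies}, the negativity of the integrand $\y_b^2(s)+s$ while the solution is in region I, the resulting strict inequality $(\y_b')^2>(\y_b^2+x)^2$, and a sign determination from the asymptotics at $-\infty$ before taking square roots. The only cosmetic difference is how you fix the sign of $\y_b'$ (via concavity plus the behavior at $-\infty$, rather than via the non-vanishing of $\y_b'$ implied by the strict inequality, which is your stated alternative and is what the paper does); both are fine.
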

\begin{proof}
	First note that $\y_n''(x)<0$ for $x<x_*(b)$. This implies that the integrand at the left-hand side of \eqref{eq:integratedform} in Proposition \ref{prop:ysatisfies} is negative and thus
	\[
	\big(\y_b'(x)\big)^2 > \big( \y_b^2(x) + x\big)^2.
	\]
	We can see from this that $\y_b'(x)$ has a fixed sign on $(-\infty,x_*(b))$. It follows from the asymptotic behavior of $\y_b$ at $-\infty$ that $\y_b'(x)$ must in fact be negative for $x$ large and negative, and $\y_b^2(x) + x<0$ for $x< x_*(b)$. Taking appropriate square roots gives us the final result.
\end{proof}
The latter result can be used to find a lower bound for solutions $\y_b$ that are positive.

\subsection{A first lower bound for $\y_b$}
In this section we show one half of the inequality in Theorem \ref{thm:inequ}. In particular we show that $\y_b(x)$ is bounded below by the first function listed in Theorem \ref{thm:inequ}. 
\label{subsec:lowerbound1}

\begin{theorem}
	\label{thm:lowerBnd1}
	For and $b\in \mathbb{R}$ such that $\y_b$ is positive for $-\infty<x<\min(\omega(b),0)$ we have that 
	\[
	\y_b(x) \ge \sqrt{-x} \rootroot
	\]
\end{theorem}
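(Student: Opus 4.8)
The plan is to exploit the exact identity \eqref{eq:integratedform} from Proposition~\ref{prop:ysatisfies} together with the sign information from Lemma~\ref{cor:yPrimeBnd}. Define $g(x) = \y_b^2(x) + x$, so that on $(-\infty, x_*(b))$ we have $g(x) < 0$, and Lemma~\ref{cor:yPrimeBnd} gives $\y_b'(x) < g(x)$, i.e.\ $\y_b'(x) < 0$ there; moreover \eqref{eq:integratedform} rewrites as
\[
\big(\y_b'(x)\big)^2 = g(x)^2 - 2\int_{-\infty}^x g(s)\,ds,
\]
and since the integral is negative (as $g<0$), we automatically get $\big(\y_b'(x)\big)^2 \ge g(x)^2$, hence $\y_b'(x) \le g(x) < 0$. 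The candidate lower bound $\y_b(x) \ge \sqrt{-x}\,\rootroot$ is equivalent, after squaring and setting $h(x) := \y_b^2(x)$, to the statement that $h$ lies above the smaller root of the quadratic-in-$h$ relation that one obtains by asking where the naive differential inequality $\y_b' = g$ would have a fixed point. Concretely, I expect the function $\sqrt{-x}\,\rootroot$ to be exactly the curve on which $\big(\y_b^2 + x\big)^2 = -2x\y_b^2 \cdot(\text{something})$ — more precisely, one checks that $y = \sqrt{-x}\,\rootroot$ satisfies $\big(y^2+x\big)^2 = y^2$, i.e.\ it is the locus where $|y^2 + x| = y$ with $y^2 + x < 0$, equivalently $y^2 + x = -y$. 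This is the natural barrier coming from the differential inequality.

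**The argument** then runs as a barrier/comparison argument. Suppose for contradiction that $\y_b$ dips to or below the curve $\phi(x) := \sqrt{-x}\,\rootroot$ at some first point; since $\y_b(x) \sim \sqrt{-x}$ and $\phi(x) \sim \sqrt{-x}$ as $x\to-\infty$ but with $\phi(x) < \sqrt{-x}$ strictly (because the square-root factor is $<1$), and using the refined asymptotics \eqref{eq:familybatinfinityu} (or \eqref{eq:boundary1}) to see $\y_b$ stays above $\phi$ near $-\infty$, there is a first crossing point $\xi \in (-\infty, x_*(b))$ with $\y_b(\xi) = \phi(\xi)$ and $\y_b(x) > \phi(x)$ for $x < \xi$. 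At $\xi$ one has $\y_b^2(\xi) + \xi = -\y_b(\xi) = -\phi(\xi)$, so $g(\xi) = -\y_b(\xi)$, and from $\y_b'(\xi) \le g(\xi)$ we get $\y_b'(\xi) \le -\y_b(\xi) < 0$. On the other hand, I would compute $\phi'(\xi)$ directly and show $\phi'(\xi) > -\phi(\xi) = -\y_b(\xi) \ge \y_b'(\xi)$ cannot hold, i.e.\ I want to derive that $\y_b$ is descending through $\phi$ strictly faster than $\phi$ itself, and then argue — exactly as in the proof of Lemma~1 (the tangent-line trick with convexity/concavity in region~I) — that once $\y_b$ is below $\phi$ it is trapped below a line of sufficiently negative slope and must hit zero or a pole, contradicting positivity on $(-\infty, \min(\omega(b),0))$. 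The cleanest route may be to avoid computing $\phi'$ and instead iterate the inequality $\y_b'(x) \le \y_b^2(x) + x$: on the region below $\phi$ one has $\y_b^2 + x < -\y_b$, so $\y_b' < -\y_b$, which forces exponential decay of $\y_b$ and a contradiction with $\y_b > 0$ over an unbounded... no — over a bounded interval this only forces $\y_b$ to decrease, so one instead integrates $\y_b'/\y_b < -1$ to get $\y_b$ decreasing at a definite rate and combine with $\y_b' \le \y_b^2 + x$, $x$ bounded, to reach $\y_b' \le -\text{const} < 0$ and hence $\y_b$ hits $0$ in finite time before $x = 0$.

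**The key technical point**, and the one I expect to be the main obstacle, is pinning down precisely why $\y_b$ stays above $\phi$ for $x$ near $-\infty$ (the "initial condition" of the barrier argument): both curves are asymptotic to $\sqrt{-x}$ to leading order, so one must use the subleading terms. From \eqref{eq:boundary1} (transported to the $\y_b$ normalization via \eqref{eq:PIIalt}, \eqref{eq:familybatinfinityu}) we have $\y_b(x) = \sqrt{-x}\big(1 - \tfrac{1}{8}(-x)^{-3} \cdot 2 + \cdots\big)$ roughly, while expanding $\phi(x) = \sqrt{-x}\sqrt{(\sqrt{1-2x^3}-1)/(\sqrt{1-2x^3}+1)}$ for large $-x$: writing $\sqrt{1-2x^3} = \sqrt{1 + 2(-x)^3}$, the ratio is $\approx 1 - 2/\sqrt{1+2(-x)^3} \approx 1 - \sqrt{2}(-x)^{-3/2}$, so $\phi(x) \approx \sqrt{-x}\big(1 - \tfrac{\sqrt 2}{2}(-x)^{-3/2} + \cdots\big)$, which is strictly below $\y_b(x)$ since the correction $-\tfrac{\sqrt2}{2}(-x)^{-3/2}$ beats $-\tfrac14(-x)^{-3}$ for $-x$ large. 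This establishes $\y_b(x) > \phi(x)$ for $x \ll 0$, cleanly launching the comparison. I would organize the final writeup as: (i) record $\phi$ solves $\phi^2 + x = -\phi$ and hence is the relevant barrier; (ii) the large-negative-$x$ comparison via asymptotics; (iii) the contradiction at a hypothetical first crossing using Lemma~\ref{cor:yPrimeBnd} and the finite-time-to-zero estimate; concluding $\y_b(x) > \phi(x)$ (hence $\ge$, and in fact $>$) throughout $(-\infty, \min(\omega(b),0))$.
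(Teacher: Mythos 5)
Your overall strategy --- a first-crossing/barrier argument launched from the asymptotics at $-\infty$ and driven by the derivative bound $\y_b'<\y_b^2+x$ of Lemma \ref{cor:yPrimeBnd} --- is exactly the one the paper uses, but the algebraic identification on which your contradiction rests is wrong, and the closing step is not actually carried out. Writing $S=\sqrt{1-4x^3}$, the barrier $\phi(x)=\sqrt{-x}\sqrt{(S-1)/(S+1)}$ simplifies to $\phi(x)=\tfrac{2x^2}{1+S}$, and one computes $\phi(x)^2+x=\tfrac{2x}{1+S}=\phi(x)/x$, \emph{not} $-\phi(x)$; your claimed identity $\bigl(\phi^2+x\bigr)^2=\phi^2$ holds only at $x=-1$. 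The correct characterization of $\phi$ is that the straight line through $(x_0,\phi(x_0))$ with slope $\phi(x_0)^2+x_0=\phi(x_0)/x_0$ passes through the origin. That calibration is what makes the contradiction close: at a first crossing $x_0$, Lemma \ref{cor:yPrimeBnd} gives $\y_b'(x_0)<\phi(x_0)/x_0<0$, and since $\y_b''<0$ throughout region I, the solution stays below this line for $x>x_0$ and must therefore vanish at some negative $x$, contradicting positivity on $(-\infty,\min(\omega(b),0))$.

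With your (incorrect) identity you only reach $\y_b'\le-\y_b$ below the barrier, which, as you yourself noticed mid-argument, gives exponential decay and no contradiction; and your fallback ``$\y_b'\le-\mathrm{const}<0$, hence $\y_b$ hits $0$ in finite time'' is not enough either, because you must show it hits $0$ \emph{before} $x=0$, which requires precisely the quantitative tuning above. Your asymptotic comparison near $-\infty$ is sound (the correction to $\phi$ is of order $\sqrt{-x}\,(-x)^{-3/2}$, far larger than the $(-x)^{-3}$ correction to $\y_b$) and matches what the paper invokes, so once you replace the barrier identity by $\phi^2+x=\phi/x$ and run the secant-line-through-the-origin argument, the proof goes through.
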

\begin{proof}
	We will use the notation
	\begin{equation}
		\label{eq:ex}
		e(x) = 1-\frac{\sqrt{1-4x^3}-1}{\sqrt{1-4x^3}+1} = \frac{2}{1+\sqrt{1-4x^3}}.
	\end{equation}
	With this notation we can rewrite the inequality as
	\[
	\y_b(x) > \sqrt{-x(1-e(x))}.
	\]
	Note that from the asymptotic behavior of $e(x)$ and $\y_b(x)$ for $x\to -\infty$ we find that the inequality holds  for $x$ large and negative.
	
	Suppose now that the inequality does not hold for all $x<\min(\omega(b),0)$. That means there exists a point $x_0<\min(\omega(b),0)$ where $\y_b(x)$ crosses the curve $y = \sqrt{-x(1-e(x))}$. From Lemma \ref{cor:yPrimeBnd} we get that $\y_b'(x_0) < e(x_0) x_0$. This means that $\y_b(x)$ even crosses the line with slope $e(x_0)x_0$ that passes through the point $(x_0, \sqrt{-x_0(1-e(x_0))})$ at $x_0$. This line is given by:
	\[
	\ell: y= \sqrt{-x_0(1-e(x_0))}+e(x_0) x_0 (x-x_0).
	\]
	It can be checked that this line passes through the origin. Since $\y_b''(x)<0$ for $\y_b(x)$ in the region enclosed by $y =\sqrt{-x}$ and the negative real axis, this is true even for $\y_b(x)$ between $\ell$ and the negative real axis. Since $\y_b'(x)< e(x_0) x_0$ on this region it follows that $\y_b(x)$ will leave the region at a point on the negative real axis. This is a contradiction to the assumption that $\y_b(x)>0$ for all $x< \min(0,\omega(b))$, and thus no such $x_0$ can exist, which gives the desired inequality.
\end{proof}

We will prove in Section \ref{subsec:monotonicity} that $\y_b(x)> \y_0(x) >0$ for all $b\ge 0$ and $x<\min(0,\omega(b))$. This will give us that the inequality holds for all $\y_b$ with $b\ge 0$. Before we come to that we first recall some properties of the Hastings-McLeod solution $b=0$ and  add some new results.

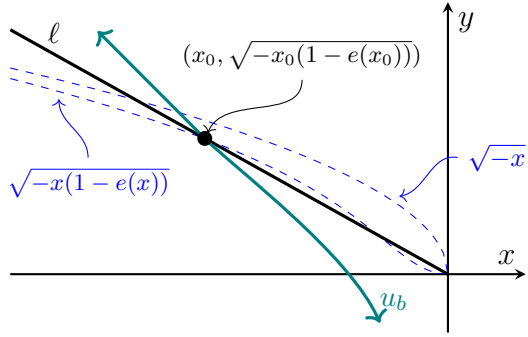
\begin{figure}
	\begin{center}
		
		\begin{tikzpicture}
			\begin{axis}[
				axis equal image,
				xmin = -4.5,
				xmax = 0.8,
				ymin = -0.6,
				ymax = 2.8,
				inner axis line style={<->, thick},
				xlabel={\large $x$},
				ylabel={\large $y$},
				axis lines=middle,
				ticks = none,
				]
				\addplot[dashed, blue, domain = -4.5:0, samples = 100]  {sqrt(-x)} ;
				\addplot[dashed, blue, domain = -4.5:0, samples = 100]  {(-x*(sqrt(1-4*x^3)-1)/(sqrt(1-4*x^3)+1))^(1/2)} ;
				\draw[blue, ->, out=90,in=-90,] (axis cs: -3.7, 1.2)  coordinate (a) to (axis cs: -4,1.8);
				\node[blue, below] at (a) {\footnotesize $\sqrt{-x(1-e(x))}$};		
				\draw[blue, ->, out=170,in=10,] (axis cs: 0.1, 1.2)  coordinate (b) to (axis cs: -0.5,0.8);
				\node[blue, right] at (b) {\small $\sqrt{-x}$};		
				\addplot[ very thick, black, domain = -4.5:0] {-0.559*x} node[pos=0.1, above] {$\ell$};
				\draw[very thick, teal, <->] (axis cs: -3.6,2.5) .. controls (axis cs: -3.5,2.4) and (axis cs: -2.8, 1.697).. (axis cs: -2.5, 1.397) .. controls (axis cs: -2.2, 1.097) and (axis cs: -1, 0.2) .. (axis cs:-0.7,-0.5) node[pos = 0.9, right] {$\y_b$};
				\draw[ ->, out=-90,in=70,] (axis cs: -1.5, 2)  coordinate (c) to (axis cs:-2.47,1.45) coordinate (d);
				\node[above] at (c) {\footnotesize $ (x_0, \sqrt{-x_0(1-e(x_0))})$};			
				\node[circle,fill,inner sep=2pt] at  (axis cs:-2.5,1.397) {};		
			\end{axis}
		\end{tikzpicture}

		\caption{The top dashed curve represent $\y=\sqrt{-x}$ and separates region $I$ from region $II$. The lower dashed curve represents  $x\mapsto \sqrt{-x}\rootroot$. As shown in the proof of Theorem \ref{thm:lowerBnd1}, once a solution $\y_b$ crosses the lower dashed curve it has to remain below the line through the intersection point and the origin. It will thus take negative values. } \label{fig:lem2}
	\end{center}
\end{figure}

\subsection{The Hastings-McLeod solution}
\label{subsec:HM}

In this section we show the second bound in Theorem \ref{thm:inequ} for the Hastings-McLeod solution $\y_0(x)$.

The existence and uniqueness of the Hastings-McLeod solution $\yy_0(x)$ to the Painlev\'e equation \eqref{eq:PII2a} was already established by Hastings and McLeod \cite{HM}. This is the solution that satisfies boundary conditions \eqref{eq:boundary1} and \eqref{eq:HMboundary}.  The transformation $\y_0(x) = 2^{\frac{1}{3}}\yy_0(2^{\frac{1}{3}}x)$ gives us existence and uniqueness for  $\y_0$. They also prove the following:

\begin{lemma} The Hastings-McLeod solution $\y_0$ has the following properties
	\begin{enumerate}
		\item 	$\omega(0)=+ \infty$, ie $\y_0$ has no poles on the real line. 
		\item 	The solution $\y_0(x)$ crosses the curve $y=\sqrt{-x}$ exactly once. 
		\item 	The solution $\y_0(x)$ is always positive.
		\item 	The solution $\y_0(x)$ is strictly decreasing. 	
	\end{enumerate}
\end{lemma}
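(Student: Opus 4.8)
The final statement to be proven is the Lemma listing four properties of the Hastings-McLeod solution $\y_0$: (1) it has no poles on $\mathbb{R}$, (2) it crosses $y=\sqrt{-x}$ exactly once, (3) it is always positive, (4) it is strictly decreasing. These are classical facts from Hastings-McLeod's original paper, so one approach is simply to cite \cite{HM}; but since the paper works with the rescaled equation \eqref{eq:PIIalt} and wants a self-contained treatment, I would sketch a direct argument using the phase-plane analysis already set up (the four regions, convexity/concavity, and Proposition \ref{prop:ysatisfies}).

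\textbf{Proof plan.} The plan is to use the region analysis together with the integrated identity \eqref{eq:integratedform}. First I would recall that by the defining asymptotics \eqref{eq:familybatinfinityu}, $\y_0(x)\sim\sqrt{-x/2}$ as $x\to-\infty$, so $\y_0$ starts in region $I$ (positive, below $y=\sqrt{-x}$, with $x+\y_0^2<0$), and by Lemma \ref{cor:yPrimeBnd} we have $\y_0'(x)<\y_0^2(x)+x<0$ there, so $\y_0$ is decreasing and concave as long as it stays in region $I$. By Theorem \ref{thm:lowerBnd1} (applicable since $\y_0$ is positive near $-\infty$, and this lower bound is maintained by the same crossing argument), $\y_0$ stays bounded below by $\sqrt{-x}\,\rootroot$ as long as it remains positive. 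Next I would argue $\y_0$ cannot cross $y=\sqrt{-x}$ into region $II$: by Lemma 3.4 once it enters region $II$ it never returns to region $I$, but a solution in region $II$ is convex, and combining the tangent-line argument of Lemma 3.4 with the known boundary behavior \eqref{eq:HMboundary} that $\y_0(x)=2^{1/3}\Ai(2^{1/3}x)\to 0$ as $x\to+\infty$ forces a contradiction — a convex curve lying above its tangent line cannot decay to zero. This simultaneously rules out entering region $II$ (giving property 2, that it touches $y=\sqrt{-x}$ "from below'' only in the limiting sense, i.e. stays weakly below, actually strictly below for finite $x$) and, since it never enters region $II$, its only way to leave region $I$ is by crossing $y=0$; but if it did cross $y=0$ at some finite $x_0<0$ it would enter region $IV$ where it is concave, hence would continue decreasing, contradicting $\y_0(x)\to 0^+$ as $x\to+\infty$. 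Hence $\y_0>0$ on all of $\mathbb{R}$ (property 3), it stays in region $I$ for $x<0$ and then in region $II$ (below $\sqrt{-x}$ is vacuous for $x>0$) for $x>0$, and in particular has no pole (property 1), since poles only occur when $|\y_0|\to\infty$ which is incompatible with staying in a bounded region near any finite $x$. For property 4, from \eqref{eq:integratedform} with the integrand $\y_0^2+s$ having a sign: for $x<0$ we are in region $I$ so $\y_0^2+s<0$, hence $\big(\y_0'\big)^2>\big(\y_0^2+x\big)^2$ and $\y_0'$ has constant sign, negative by the asymptotics; for $x>0$, $\y_0'<0$ must persist because $\y_0$ stays between $0$ and $\sqrt{-x}$ is not available — instead use that for $x>0$ the point $(x,\y_0(x))$ lies in region $III$ or the boundary, where $\y_0''<0$, so $\y_0'$ is decreasing and stays negative once negative. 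Thus $\y_0$ is strictly decreasing throughout.

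\textbf{Main obstacle.} The delicate point is ruling out a pole and ruling out the crossing into region $II$ without simply invoking \cite{HM}: the cleanest rigorous route is the tangent-line/convexity contradiction against the $x\to+\infty$ behavior $\y_0(x)=2^{1/3}\Ai(2^{1/3}x)\to 0$, and one must be careful that this known asymptotic is \emph{part of the characterization} of the Hastings-McLeod solution (so it is legitimate to use), rather than something to be derived. A second subtlety is that Theorem \ref{thm:lowerBnd1} is stated for $b$ with $\y_b$ positive on $(-\infty,\min(\omega(b),0))$, which is exactly what we are in the process of proving for $\y_0$; to avoid circularity I would phrase the positivity argument first as: ``$\y_0$ is positive on $(-\infty,0)$'' (immediate from region $I$ and concavity down to $x=0^-$, since leaving region $I$ downward before $x=0$ would require crossing $y=0$, landing in region $IV$ where concavity then forces $\y_0\to-\infty$ contradicting \eqref{eq:HMboundary}), and only then invoke the lower bound and extend positivity past $x=0$. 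Given the paper's stated preference for self-contained arguments, I would present this phase-plane proof but also remark that properties (1)--(4) are originally due to \cite{HM}.
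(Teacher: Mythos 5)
The paper's proof of this lemma is a one-line citation: all four properties are parts of Theorem 1.3 of \cite{HM}. Your proposal correctly identifies that option but then commits to a self-contained phase-plane sketch, and that sketch contains genuine errors. The most serious one concerns property (2): the statement asserts that $\y_0$ crosses $y=\sqrt{-x}$ \emph{exactly once}, and it must -- by \eqref{eq:boundary1} the solution lies slightly \emph{below} the parabola as $x\to-\infty$, while $\y_0(0)>0=\sqrt{-0}$ places it \emph{above} the parabola near $x=0$, so there is a crossing at some negative $x$ (unique by the paper's first lemma). Your argument instead concludes that $\y_0$ never enters region II and ``stays strictly below for finite $x$,'' which contradicts the very statement you are proving. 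The step you use to rule out the crossing, ``a convex curve lying above its tangent line cannot decay to zero,'' is false: the tangent line at the crossing point has negative slope and tends to $-\infty$, so lying above it is perfectly compatible with decaying to $0$ (the Airy function itself is convex and decays to zero for large $x$). The tangent-line contradiction in the paper's lemma works only in the leftward direction, against the $\sqrt{-x}$ growth at $-\infty$.

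The convexity signs are also used incorrectly at the key steps (note the paper's own text and its Figure 2 caption disagree; the caption is the correct one, since $\y''=2\y(x+\y^2)$). In region IV ($\y<0$, $x+\y^2<0$) one has $\y''>0$, so a solution entering region IV is convex and can bend back up; your claim that ``concavity then forces $\y_0\to-\infty$'' therefore does not establish positivity. Likewise, for $x>0$ with $\y_0>0$ the point $(x,\y_0(x))$ lies in region II (where $\y''>0$), not region III, so the monotonicity argument for $x>0$ as written does not go through (one can salvage it: if $\y_0'\ge 0$ somewhere in region II, strict convexity forces $\y_0'>0$ eventually, contradicting $\y_0\to 0^+$, but that is not the argument you gave). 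As it stands the sketch does not prove properties (2)--(4); the safe route, and the one the paper takes, is simply to quote \cite{HM}.
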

\begin{proof} These statements are parts of Theorem 1.3 in  \cite{HM}.
\end{proof}
Using these properties together with the lower bound proved in Theorem \ref{thm:lowerBnd1} we can prove that $\y_0(x) > \sqrt{-x/3}$, which will show that Theorem \ref{thm:inequ} holds for $b=0$. 
\begin{lemma} \label{lem:keyineq}
	We have $\y_0(x) >\sqrt{-x/3}$ for $x <0.$
\end{lemma}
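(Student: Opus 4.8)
The plan is to use the integrated identity from Proposition \ref{prop:ysatisfies} to control the solution on the interval before it crosses $y=\sqrt{-x}$, and then handle the region between $y=\sqrt{-x/3}$ and $y=\sqrt{-x}$ separately using convexity/concavity and the decreasing property. Let me sketch this in two stages. First, I would observe that from Theorem \ref{thm:lowerBnd1} we already have $\y_0(x)\geq \sqrt{-x}\rootroot$, and since $\rootroot \to 1$ as $x\to-\infty$ while $\rootroot$ decreases, this bound alone is not enough to beat $\sqrt{-x/3}$ near $x=0$ (there $\rootroot\to 0$). So the argument has to be different near the origin. The natural quantity to track is $\y_0'(x)$ versus the slope of the curve $y=\sqrt{-x/3}$, which is $-\tfrac{1}{2\sqrt{-3x}}=-\tfrac{1}{2\sqrt 3\sqrt{-x}}$, a very mild (small) negative slope near $x=0^-$.

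Here is the mechanism I expect to work. Suppose for contradiction that $\y_0$ touches or crosses $y=\sqrt{-x/3}$ from above at some first point $x_0<0$; then $\y_0(x_0)=\sqrt{-x_0/3}$ and $\y_0'(x_0)\leq -\tfrac{1}{2\sqrt 3\sqrt{-x_0}}$. Now I want to get a contradiction with the identity
\[
\big(\y_0'(x)\big)^2 + 2\int_{-\infty}^x \big(\y_0^2(s)+s\big)\,ds = \big(\y_0^2(x)+x\big)^2,
\]
evaluated at $x=x_0$. At $x_0$ we have $\y_0^2(x_0)+x_0 = -x_0/3 + x_0 = 2x_0/3<0$, so the right-hand side equals $4x_0^2/9$. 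Thus I need a good lower bound on the integral term. On the region where $\y_0(x)\geq\sqrt{-x}$ (i.e. region I, for $x$ to the left of the single crossing point $\xi_0$ guaranteed by the Hastings-McLeod lemma), the integrand $\y_0^2(s)+s\geq 0$, so that part of the integral is nonnegative; on $(\xi_0,x_0)$ one has $\sqrt{-x/3}\leq \y_0(s)\leq\sqrt{-\xi_0}$ roughly, and I can bound $\y_0^2(s)+s$ from below by $-s/3+s = 2s/3 \geq 2\xi_0/3$ (using $\y_0(s)\ge\sqrt{-s/3}$ up to $x_0$, which holds by minimality of $x_0$, and $s\ge\xi_0$ is wrong sign — more carefully $\y_0^2(s)+s\ge 2s/3\ge 2x_0/3$ since $s\le x_0<0$... wait $s\le x_0$ gives $2s/3\ge$? no: $s$ more negative means $2s/3$ more negative). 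So a cruder but valid bound is $\y_0^2(s)+s\ge \y_0^2(s)+s$ and using Theorem \ref{thm:lowerBnd1}'s bound $\y_0^2(s)\ge -s(1-e(s))$ gives $\y_0^2(s)+s\ge -s\,e(s)\cdot(-1)$... let me not grind; the point is $\int_{\xi_0}^{x_0}(\y_0^2+s)\,ds$ is bounded below by something explicit, and one needs the total left side at $x_0$ to be $\geq \y_0'(x_0)^2 \geq \tfrac{1}{12(-x_0)}$. Comparing $\tfrac{1}{12(-x_0)}$ plus a controlled integral against $\tfrac{4x_0^2}{9}$ should fail for the relevant range of $x_0$, giving the contradiction. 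Alternatively, and perhaps more cleanly, I would combine it with the previous lemmas: since $\y_0$ is decreasing and crosses $\sqrt{-x}$ exactly once, for $x$ near $0^-$ we are already below $\sqrt{-x}$, and a direct ODE-comparison argument (the solution is concave there as long as $0<\y_0<\sqrt{-x}$, so it lies below its tangent, and if it dropped to $\sqrt{-x/3}$ with too-steep slope it would then be forced negative before $x=0$, contradicting positivity) mirrors exactly the proof of Theorem \ref{thm:lowerBnd1} but with the target curve $\sqrt{-x/3}$ and the line through the origin of slope $\y_0'(x_0)$ — one checks that the tangent line construction again passes suitably relative to the origin.

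Concretely, I would run the tangent-line argument: at the hypothetical first crossing point $x_0$ of $y=\sqrt{-x/3}$, form the line $\ell$ through $(x_0,\sqrt{-x_0/3})$ with slope $\y_0'(x_0)$. Using Lemma \ref{cor:yPrimeBnd} (valid while $\y_0$ is still in region I, or an analogous bound once past it) together with concavity of $\y_0$ in the region between $y=\sqrt{-x}$ and $y=0$, the graph of $\y_0$ on $(x_0,0)$ stays below $\ell$; then computing where $\ell$ meets $y=0$ and checking that this happens at some $x_1<0$ forces $\y_0(x_1)\le 0$, contradicting positivity of $\y_0$. The crucial inequality to verify is that the slope $\y_0'(x_0)$ is steep enough (more negative than the slope of the line from $(x_0,\sqrt{-x_0/3})$ to the origin, which is $-\tfrac{1}{3}\cdot\tfrac{\sqrt{-x_0/3}}{-x_0}\cdot(-x_0)$, i.e. $-\sqrt{-x_0/3}/(-x_0)\cdot$const) — and here is where I expect the real work: establishing that $\y_0'(x_0)$ at the crossing is indeed at least that steep, which should come from feeding $\y_0^2(x_0)+x_0 = 2x_0/3$ into Lemma \ref{cor:yPrimeBnd} (giving $\y_0'(x_0)<2x_0/3$ if we are still before $x_*$, but after $x_*$ one needs the integrated identity instead). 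The main obstacle, then, is bookkeeping the sign of $\y_0^2+x$ across the single crossing point $\xi_0$ and making sure the bound on $\y_0'(x_0)$ survives past $x_*(0)$ where Lemma \ref{cor:yPrimeBnd} no longer directly applies; there I would fall back on Proposition \ref{prop:ysatisfies} directly, bounding the integral using $\y_0(s)>\sqrt{-s/3}$ for $s<x_0$ (true by minimality) to show $\int_{-\infty}^{x_0}(\y_0^2+s)\,ds$ is small enough in absolute value that $(\y_0^2(x_0)+x_0)^2$ forces $|\y_0'(x_0)|$ to be large, which is the contradiction.
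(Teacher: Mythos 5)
Your setup matches the paper's: reduce to $x_0\in(-(3/4)^{1/3},0)$ via Theorem \ref{thm:lowerBnd1}, argue by contradiction at a first crossing of $y=\sqrt{-x/3}$, and record the slope bound there. But both mechanisms you propose for closing the contradiction fail precisely in this remaining regime, and the failure is quantitative, not just a matter of bookkeeping. For the tangent-line route: the line through $(x_0,\sqrt{-x_0/3})$ with slope $\y_0'(x_0)$ reaches $y=0$ before $x=0$ only if $|\y_0'(x_0)|>\sqrt{-x_0/3}/(-x_0)=1/\sqrt{3|x_0|}$. The tangency condition gives only $|\y_0'(x_0)|\ge \tfrac12\cdot 1/\sqrt{3|x_0|}$ (half of what is needed), and Lemma \ref{cor:yPrimeBnd} gives $|\y_0'(x_0)|\ge 2|x_0|/3$, which exceeds $1/\sqrt{3|x_0|}$ exactly when $|x_0|>(3/4)^{1/3}$ --- i.e.\ exactly outside the regime you still have to treat. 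In fact the line is still strictly positive at $x=0$, the concavity region $0<y<\sqrt{-x}$ pinches off there, and for $x>0$ any positive solution is convex, so nothing forces it to zero. This is not a detail ``to check''; it is the structural reason the analogue of Theorem \ref{thm:lowerBnd1}'s argument (whose line passes through the origin) does not transfer to the curve $\sqrt{-x/3}$.

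The integrated-identity route has the same problem. At the crossing, Proposition \ref{prop:ysatisfies} reads $(\y_0'(x_0))^2=\tfrac{4x_0^2}{9}+2\bigl|\int_{-\infty}^{x_0}(\y_0^2+s)\,ds\bigr|$ (the integrand is negative throughout region I), so a contradiction with $(\y_0'(x_0))^2\ge \tfrac{1}{12|x_0|}$ requires an \emph{upper} bound on $\bigl|\int_{-\infty}^{x_0}(\y_0^2+s)\,ds\bigr|$. The pointwise lower bounds you have available ($\y_0(s)>\sqrt{-s/3}$ by minimality, or Theorem \ref{thm:lowerBnd1}) give $\y_0^2(s)+s\gtrsim -(-s)^{-1/2}$, which is not integrable at $-\infty$, so they yield no finite upper bound on the integral; and even with the true (finite) value of the integral the inequality $\tfrac{4x_0^2}{9}+2|{\int}|<\tfrac{1}{12|x_0|}$ fails for moderate $x_0$ in $(-(3/4)^{1/3},0)$. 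The missing idea is the paper's: one must follow the solution \emph{past} the crossing of $y=\sqrt{-x}$ and past $x=0$, where it is convex, and show it still cannot recover. The paper does this by replacing the tangent line with a genuine comparison solution $\y_*$ of \eqref{eq:PIIalt} launched from $(x_1,\sqrt{-x_1})$ (where the tangent line meets $y=\sqrt{-x}$) with the same slope, proving $\y_*\ge\y_0$ forward by a monotone comparison, and then estimating $\y_*$ through its double-integral representation out to $x=-2x_0>0$ to show $\y_*(-2x_0)<0$; the hypothesis $x_0\ge-(3/4)^{1/3}$ enters these explicit estimates. Without some version of this ``not enough momentum to bend back'' step, your proof does not close.
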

\begin{proof}
	Since we know that $\y_0$ is strictly positive, the inequality in Theorem \ref{thm:lowerBnd1} applies. In the region $x<-(\frac34)^{1/3}$ this is a stronger inequality than the one we want to prove. It thus  remains to prove the inequality for $y_0(x) > \sqrt{-x/3}$ for $x \in (-(\frac{3}{4})^{1/3},0).$ 
	
	Suppose that $\y_0(x)$ touches or crosses the curve  $y=\sqrt{-x/3}$ at $x=x_0$. Then for some time it will remain under the line $m$ defined by 
	$$
	m: y=\sqrt{-x_0/3}-\frac{x-x_0}{2 \sqrt{-3 x_0}}.
	$$
	In fact, it will remain under that line at least until $m$ crosses the curve $y=\sqrt{-x}$, since $\y_0''(x)<0$ in this region. A simple computation shows that $m$ and  $y=\sqrt{-x}$ cross at $(x_1,\sqrt{-x_1})$ where 
	$$x_1=(5-2 \sqrt 6 )x_0 > x_0/9.$$
	Let $\y_*$ be the solution to \eqref{eq:PIIalt} that passes through the point $(x_1,\sqrt{-x_1})$ and has derivative equal to the slope of the line $m$. In other words $\y_*$ satisfies $\y_*(x_1) = \sqrt{-x_1}$ and $\y_*'(x_1) = -\frac{1}{2\sqrt{-3x_0}}$.
	
	\noindent \textbf{Claim:} Let $x^- = \inf_x\{\y_*(x) < 0 \}$, then $\y_*(x) \ge  \y_0(x)$ for all $x_1<x< x^-$.
	
	Notice that $\y_*(x_1) >  \y_0(x_1)$ and $\y_*'(x_1) >  \y_0'(x_1)$ since $\y_0''(x)<0$ for $x \in (x_0,x_1)$. Then since $\y_*(x)$ satisfies \eqref{eq:PIIalt} we have that $\y_*''(x)>0$ for $x>x_1$, and $0=\y_*''(x_1) \ge \y_0''(x_1)$. Moreover, we can see that for positive solutions $\y$ to \eqref{eq:PIIalt} the corresponding second derivative $\y''$ is increasing in $\y$. This means that these inequalities will propagate forward giving us $\y_*(x) >  \y_0(x),$ $\y_*'(x) >  \y_0'(x)$, and $\y_*''(x) \ge \y_0''(x)$ for all $x_1<x< x^-$. This proves the claim.
	
	We will show that $\y_*(-2x_0)<0$ which implies $\y_0(x)<0$ for some $x< -2x_0$. This is a contradiction of the know properties of $\y_0$, which means that no such point $x_0$ exists and $\y_0$ never crosses the curve $y = \sqrt{-x/3}$. 
	
	To show that $\y_*(-2x_0)<0$ we begin by noting that $\y_*$ can be written as 
	$$\y_*(x)=\y_*(x_1)+(x-x_1)\y_*'(x_1)+ \int_{x_1}^x \int_{x_1}^t \y_*(s)(\y_*(s)^2+s)ds.$$
	Now observe that 
	$$\y_*(x_1)+(-2x_0-x_1)\y_*'(x_1) < \frac{\sqrt{-x_0}}{3}-\frac{-2x_0-x_0}{2\sqrt{-3x_0}} < -\frac{\sqrt{-x_0}}{2}.$$
	Indeed, the first inequality holds since $\y_*'(x_1)=-\frac{1}{2\sqrt{-3x_0}}, \y_*(x_1) = \sqrt{-x_1}$ and $x_0<x_1<\frac{x_0}{9}$, and the second inequality can be checked by computation.
	
	Next we estimate the double integral using the fact that $\y_*(s)^2+s>0$ for $s>x_1$,
	\begin{multline*}	\int_{x_1}^x \int_{x_1}^t \y_*(s)(\y_*(s)^2+s)ds \leq  
		\int_{x_1}^x \int_{x_1}^t \y_*(x_1)(\y_*(x_1)^2+s)ds \\= \sqrt{-x_1} \int_{x_1}^x \int_{x_1}^t (s-x_1)ds
		=\frac16 \sqrt{-x_1}(x-x_1)^3 
	\end{multline*}
	By taking $x=2x_0$ and using $x_1>x_0/9$  (and $x_1<0$) we find 
	$$\int_{x_1}^x \int_{x_1}^t \y_*(s)(\y_*(s)^2+s)ds  \leq\frac16 \sqrt{-x_1}(-2x_0-x_1)^3=\frac{1}{18}\sqrt{-x_0}(2+1/9)^3(-x_0)^{3}.$$
	Since $x_0\geq-(\frac{3}{4})^{1/3}$ we thus find
	$$\int_{x_1}^x \int_{x_1}^t \y_*(s)(\y_*(s)^2+s)ds  \leq \frac{(2+1/9)^3}{24}\sqrt{-x_0}.$$
	Now since $\frac{(2+1/9)^3}{24}<\frac{1}{2}$ we see that $\y_*(2x_0)<0$ and this gives us the necessary contradiction: $\y_0(-2x_0)<0$. 
\end{proof}

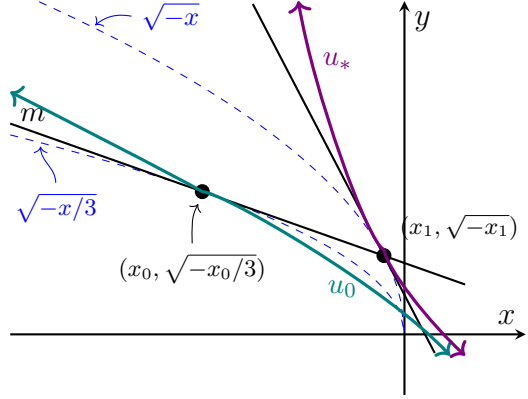
\begin{figure}[t]
	\begin{center}
		\begin{tikzpicture}
			\begin{axis}[
				axis equal image,
				xmin = -1.3,
				xmax = 0.4,
				ymin = -0.2,
				ymax = 1.1,
				inner axis line style={<->, thick},
				xlabel={\large $x$},
				ylabel={\large $y$},
				axis lines=middle,
				ticks = none,
				]
				\addplot[dashed, blue, domain = -1.3:0, samples = 100]  {sqrt(-x)} ;
				\addplot[dashed, blue, domain = -1.3:0, samples = 100]  {sqrt(-x/3)} ;
				\draw[blue, ->, out=90,in=-90,] (axis cs: -1.15, 0.5)  coordinate (a) to (axis cs: -1.2,0.62);
				\node[blue, below] at (a) {\footnotesize $\sqrt{-x/3}$};		
				\draw[blue, ->, out=170,in=40,] (axis cs: -0.9, 1.05)  coordinate (b) to (axis cs: -1.02,1.02);
				\node[blue, right] at (b) {\small $\sqrt{-x}$};		
				\addplot[thick, black, domain = -1.3:0.2] {-0.354*(x+2/3) + sqrt(2)/3} node[pos=0.05, above] {$m$};
				\addplot[thick, black, domain = -0.5:0.1] {-1.92*(x+0.0676)+0.26};
				\node[label={45:{\footnotesize $ (x_1, \sqrt{-x_1})$}},circle,fill,inner sep=2pt] at (axis cs:-0.0676,0.26) {};
				\node[circle,fill,inner sep=2pt] at (axis cs:-2/3, 0.471) {};			
				\draw[very thick, teal, <->] (axis cs: -1.3,0.8) .. controls (axis cs: -1.2,0.75) and (axis cs: -0.75, 0.52).. (axis cs: -2/3, 0.471) .. controls (axis cs: -0.6, 0.47) and (axis cs: -0.05, 0.15) .. (axis cs: 0.15,-0.07) node[pos = 0.6, below] {$\y_0$};
				\draw[very thick, violet, <->] (axis cs: -0.35, 1.1) .. controls (axis cs: -0.3,0.8) and (axis cs: -0.12, 0.3)..(axis cs:-0.0676,0.28) node[pos = 0.2, right] {$\y_*$}  .. controls (axis cs: -0.06, 0.22) and (axis cs: 0.08, 0.04) .. (axis cs: 0.2,-0.07) ;
				\draw[ ->, out=90,in=-120,] (axis cs: -0.7, 0.3)  coordinate (c) to (axis cs:-0.68, 0.43) coordinate (d);
				\node[below] at (c) {\footnotesize $ (x_0, \sqrt{-x_0/3})$};			
				\node[circle,fill,inner sep=2pt] at  (axis cs:-2.5,1.397) {};		
			\end{axis}
		\end{tikzpicture}
		
		\caption{The top dashed curve represent $\y=\sqrt{-x}$ and separates region $I$ from region $II$. The lower dashed curve represents  $x\mapsto \sqrt{-x/3}$. The orange curve is part of the graph of $\y_0$. If it intersects the lower dashed curve at a point $(x_0,\sqrt{-x_0/3})$, then it has to remain below the graph of $\y_*$ which is the solution to the PII equation that has initial condition $\y_*(x_1)=\sqrt{-x_1 }$ and $\y_*'(x_1)=1/2\sqrt{-x_1}$ with  $(x_1,\sqrt{-x_1 })$ being the intersection point of the curve $y=\sqrt{-x}$ and the tangent line to  $y=\sqrt{-x/3}$ at the point $(x_0,\sqrt{-x_0/3})$. If $x_0 >-(\frac34)^{1/3}$, then  $\y_*$ becomes negative, also $\y_0$ has to become negative and this leads to a contradiction. In other words, if a solution $\y$ to the PII equation crosses the line $y=\sqrt{-x/3}$ from above, then it has not enough momentum to bend back and remain positive.}\label{fig:lem3}
	\end{center}
\end{figure}


\subsection{Monotonicity}
\label{subsec:monotonicity}

We finish with the proof of Theorem \ref{thm:ubounds}, which together with Lemma \ref{lem:keyineq} and Theorem \ref{thm:lowerBnd1} proves Theorem \ref{thm:inequ}. 

We now prove the following. 
\begin{theorem} \label{thm:monotonicity} 
	If $b_1  > b_2 \geq 0 $ then $\omega(b_1)< \omega(b_2)$ and $\y_{b_1}(x) >\y_{b_2}(x) > \sqrt{-x/3}$ for all $x \in (-\infty, \omega(b_1))$.
\end{theorem}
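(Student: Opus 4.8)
The plan is to analyse the difference $w=\y_{b_1}-\y_{b_2}$. Subtracting \eqref{eq:PIIalt} for the two parameters, $w$ solves the linear second--order equation $w''=q(x)\,w$ on the common interval of analyticity $(-\infty,m)$, $m:=\min(\omega(b_1),\omega(b_2))$, where $q(x)=2x+2\bigl(\y_{b_1}^2+\y_{b_1}\y_{b_2}+\y_{b_2}^2\bigr)$. From \eqref{eq:familybatinfinityu}, subtracting the expansions for $b_1$ and $b_2$ and reading off the coefficient $(b_1-b_2)+o(1)$, one gets $w(x)=2^{\frac13}(b_1-b_2)(-x)^{-1/4}\exp(-\tfrac43(-x)^{3/2})(1+o(1))$ as $x\to-\infty$, so $w>0$ near $-\infty$ and $w(x)\to0$. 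The mechanism I would use is the elementary fact that a smooth function which is positive, convex and tends to $0$ at $-\infty$ is strictly increasing: if $w'(x_1)\le0$ then convexity makes $w$ non--increasing on $(-\infty,x_1]$, forcing $0=\lim_{x\to-\infty}w\ge w(x_1)>0$, a contradiction.

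First I would treat $b_2=0$. Let $T$ be the supremum of all $c<\omega(b_1)$ with $w>0$ on $(-\infty,c]$; by the asymptotics $T>-\infty$. On any such interval $q>0$: for $x>0$ this is immediate since $2x>0$ and $\y_{b_1}^2+\y_{b_1}\y_0+\y_0^2\ge0$ is a positive semidefinite quadratic form, while for $x\le0$ we have $\y_{b_1}>\y_0>\sqrt{-x/3}$ by Lemma \ref{lem:keyineq}, hence $\y_{b_1}^2+\y_{b_1}\y_0+\y_0^2>-x$ and $q>0$. Thus $w''=qw>0$ makes $w$ convex on $(-\infty,T)$, so by the fact above $w$ is strictly increasing there and $w(T)\ge w(x_0)>0$ for any $x_0<T$; continuity then gives $w>0$ slightly beyond $T$, contradicting maximality unless $T=\omega(b_1)$. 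Hence $\y_b>\y_0$ on $(-\infty,\omega(b))$ for every $b>0$, and since $\omega(0)=+\infty$ while $\omega(b)<+\infty$ for $b>0$ (from \eqref{eq:stokesplusinf}) we get $\omega(b)<\omega(0)$; combined with Lemma \ref{lem:keyineq} this also yields $\y_b(x)>\sqrt{-x/3}$ for every $b\ge0$ and $x\in(-\infty,\min(\omega(b),0))$.

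For general $b_1>b_2\ge0$ I would rerun the same continuation argument with $\y_0$ replaced by $\y_{b_2}$ (the positivity of $q$ for $x>0$ still being automatic); the only extra input is $\y_{b_2}(x)>\sqrt{-x/3}$ for $x\le0$ on the relevant range, which is exactly what the previous step provides. This gives $w>0$ on all of $(-\infty,m)$, hence by the convexity lemma $w$ is strictly increasing there, so $\lim_{x\to m^-}w(x)=L\ge w(x_0)>0$. It then remains to identify $m$. If $m=\omega(b_2)<\omega(b_1)$, then $\y_{b_2}\to+\infty$ as $x\to\omega(b_2)^-$ (simple pole, residue $-1$), so $\y_{b_1}=w+\y_{b_2}\to+\infty$ there, contradicting analyticity of $\y_{b_1}$ at $\omega(b_2)$. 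If $m=\omega(b_1)=\omega(b_2)=:\sigma^*$, both solutions have principal part $-(x-\sigma^*)^{-1}$ at $\sigma^*$ and, since in the Laurent expansion of a solution through a pole the first free coefficient occurs at a positive order (the residue and the constant term being forced), their expansions agree through order $0$; hence $w$ extends analytically across $\sigma^*$ with $w(\sigma^*)=0$, contradicting $L>0$. Therefore $m=\omega(b_1)<\omega(b_2)$ and $w>0$ on $(-\infty,\omega(b_1))$, which together with the lower bound on $\y_{b_2}$ finishes the proof.

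The step I expect to be the main obstacle is the \emph{strict} inequality $\omega(b_1)<\omega(b_2)$, i.e.\ excluding a common first real pole: the continuation argument by itself only yields $w>0$ on the common domain and hence $\omega(b_1)\le\omega(b_2)$. My resolution rests on the tension between ``$w$ is increasing and bounded away from $0$ as $x\to m^-$'' and ``$w$ extends continuously to the value $0$ at a pole shared by the two solutions'', the latter requiring the local Laurent structure of PII near a pole. A lesser technical point is the justification, from \eqref{eq:familybatinfinityu}, that $w>0$ and $w\to0$ near $-\infty$; this is routine but should be done carefully, since the $o(1)$ terms attached to different values of $b$ need not coincide and one must subtract the two expansions first.
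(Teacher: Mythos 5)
Your proof is correct, and its core mechanism is the same as the paper's: both arguments hinge on the sign of $\y_{b_1}^2+\y_{b_1}\y_{b_2}+\y_{b_2}^2+x$, made positive by the lower bound $\sqrt{-x/3}$ of Lemma \ref{lem:keyineq}, together with the ordering at $-\infty$ coming from \eqref{eq:familybatinfinityu}, and both bootstrap from the case $b_2=0$ to general $b_2$. Your equation $w''=qw$ is just the differentiated form of the paper's twice-integrated identity \eqref{eq_b1minb2}, so the "first crossing'' contradiction is the same. Two points of genuine divergence are worth noting. First, the paper needs both $w(x_0)>0$ and $w'(x_0)>0$ at a far-left base point, the latter obtained by termwise differentiation of the asymptotic expansion; your convexity lemma (positive, convex, tending to $0$ at $-\infty$ implies strictly increasing) replaces the derivative comparison by the weaker input $w\to 0$, which is slightly cleaner. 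Second, and more substantively, you explicitly establish the \emph{strict} inequality $\omega(b_1)<\omega(b_2)$, including the delicate case of a shared first pole, by observing that the residue and the first forced Laurent coefficients (cf.\ Lemma \ref{lemma:yexpansion}, where only the coefficient $c_4$ is free) would make $w$ extend continuously to $0$ at the common pole, contradicting $w$ increasing and bounded away from $0$. The paper's proof leaves this step implicit and deduces the strict monotonicity of $b\mapsto\omega(b)$ as "immediate,'' so your argument supplies a justification the paper does not spell out. One minor presentational caveat: in your continuation argument the supremum $T$ should be taken over $c<m=\min(\omega(b_1),\omega(b_2))$ from the outset (so that $w$ is defined on $(-\infty,c]$); as you note later, the case $T=\omega(b_2)<\omega(b_1)$ is then excluded because $w\to-\infty$ there, which is consistent with how you dispose of that case.
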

\begin{proof}
	
	Let $b_1>b_2\geq 0$. From the asymptotic behavior \eqref{eq:familybatinfinity} we see that $\y_{b_1}(x_0)>\y_{b_2}(x_0)$ and $\y_{b_1}'(x_0)>\y_{b_2}'(x_0)$ for negative $x_0$  large enough  (note that $\y_b$ is meromorphic and we can differentiate the asymptotic expansion termwise).  What we need to prove is that it stretches all the way to $\omega(b_1)$.
	
	By integrating \eqref{eq:PII2} we find 
	$$\y_b(x)=\y_b(x_0)+(x-x_0) \y_b'(x_0)+ \int_{x_0}^x \int_{x_0}^t \y_b(s)(\y_b(s)^2+s) ds dt.$$
	And thus, by taking differences, we have 
	\begin{multline} \label{eq_b1minb2} \y_{b_1}(x)-\y_{b_2}(x)=\y_{b_1}(x_0)-\y_{b_2}(x_0)+(x-x_0) \left(\y_{b_1}'(x_0)-\y_{b_2}'(x_0)\right)\\+ \int_{x_0}^x \int_{x_0}^t \left(\y_{b_1}(s)-\y_{b_2}(s)\right)(\y_{b_1}(s)^2+\y_{b_1}(s) \y_{b_2}(s)+\y_{b_2}(s)^2+s) ds dt
	\end{multline}
	Suppose that $\y_{b_1}(x)\leq \y_{b_2}(x)$ for some $x < \min(\omega(b_1),\omega(b_2))$ and let  $x_1$ be the first time that $\y_{b_1}(x_1)=\y_{b_2}(x_1)$.  Then $\y_{b_1}(x)\geq  \y_{b_2}(x)$ for $x_0 <x<x_1$.  By inserting $x=x_1$ in \eqref{eq_b1minb2} we see that the left-hand side vanishes. We claim that the right-hand side is strictly positive, giving a contradiction. Indeed, the sum of the terms other than the double integral are strictly positive by assumption on $x_0$. The heart of the matter is to see that even the integrand in the double integral is positive. To see this, we first treat the case $b_2=0$. Then, by Lemma \ref{lem:keyineq},   we have $\y_{b_2}(x) > \sqrt{-s/3}$  for $s \in (x_0,x_1)$ and therefore even $\y_{b_1}(s) > \sqrt{-s/3}$. That also implies that 
	$$\left(\y_{b_1}(s)-\y_{b_2}(s)\right)(\y_{b_1}(s)^2+\y_{b_1}(s) \y_{b_2}(s)+\y_{b_2}(s)^2+s)>0,$$
	for $s \in (x_0,x_1)$. Concluding, we see that all terms at the right- hand side of \eqref{eq_b1minb2} are strictly positive and we arrive at a contradiction. This means that no such $x_1$ exist and thus we have proved the inequality in the special case $b_1 > b_2=0$.  This in particular implies that $\y_b(x)>\sqrt{-x/3}$ for all $b\geq 0$ and $ x<\min(0,\omega(b))$. Now we can repeat the argument for general $b_1$ and $b_2$ and arrive at the statement.
\end{proof}
The following is now immediate.
\begin{corollary}
	The map $b \mapsto \omega (b)$ is strictly decreasing. 
\end{corollary}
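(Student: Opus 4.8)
The plan here is short, because the corollary is essentially a restatement of the first assertion of Theorem \ref{thm:monotonicity}. By definition, the map $b \mapsto \omega(b)$ is strictly decreasing precisely when $b_1 > b_2$ forces $\omega(b_1) < \omega(b_2)$, and this is exactly what Theorem \ref{thm:monotonicity} delivers for every pair $b_1 > b_2 \ge 0$, with the convention $\omega(0) = +\infty$ (consistent with the Hastings--McLeod solution $\y_0$ having no real pole).

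Concretely, I would proceed in three trivial steps: (i) fix arbitrary reals $0 \le b_2 < b_1$; (ii) apply Theorem \ref{thm:monotonicity} to obtain $\omega(b_1) < \omega(b_2)$; (iii) conclude that $b \mapsto \omega(b)$ is strictly decreasing on $[0,\infty)$ (and in particular on $(0,\infty)$). That is the entire argument; no new estimate or construction is needed.

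The main obstacle has, in effect, already been dealt with: the substantive content lies in Theorem \ref{thm:monotonicity}, whose proof compares $\y_{b_1}$ and $\y_{b_2}$ via the twice-integrated form of the PII equation together with the lower bound $\y_b(x) > \sqrt{-x/3}$ from Lemma \ref{lem:keyineq}. The only point at this stage requiring any care is the bookkeeping at the endpoint $b = 0$ and the convention for $\omega(0)$; once that is fixed, the claim is purely definitional, which is why the text can legitimately record it as ``immediate''.
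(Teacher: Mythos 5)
Your proposal is correct and matches the paper exactly: the corollary is recorded as an immediate consequence of Theorem \ref{thm:monotonicity}, whose first assertion ($b_1>b_2\ge 0$ implies $\omega(b_1)<\omega(b_2)$) is precisely the claimed strict monotonicity. Your extra remark about the convention $\omega(0)=+\infty$ is consistent with the paper and requires no further argument.
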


\begin{lemma} \label{lem:bto0} The following limit holds:
	$\omega(b) \to + \infty$ as $b \downarrow 0$. 
\end{lemma}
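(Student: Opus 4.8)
The plan is to argue by contradiction: suppose $\omega(b)\not\to+\infty$ as $b\downarrow 0$. Since $\omega$ is strictly decreasing (by the Corollary), the limit $\lim_{b\downarrow 0}\omega(b)=:\omega_*\in(-\infty,+\infty]$ exists. If $\omega_*<+\infty$, then $\omega(b)\le \omega_*$ for all $b>0$, so every $\y_b$ with $b>0$ has its pole at or before $\omega_*$. The goal is to show this forces a contradiction with the known fact that $\y_0$ (the Hastings--McLeod solution) is pole-free and, in particular, finite at $\omega_*+1$ (say). The mechanism I would use is a continuous-dependence / compactness argument on the real line combined with the monotonicity $\y_b(x)>\y_0(x)$ from Theorem \ref{thm:monotonicity}.

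Concretely, fix a point $x_1<\omega_*$ (for instance $x_1=\omega_*-1$ if $\omega_*>-\infty$, or any fixed real if $\omega_*=-\infty$ is to be excluded — note $\omega_*=-\infty$ would already contradict $\omega(b)\to\omega_*$ being a decreasing limit from $b\downarrow0$, so really $\omega_*\in(-\infty,+\infty]$ and we only need to rule out $\omega_*<+\infty$). On the interval $(-\infty,x_1]$ all the solutions $\y_b$ with $b$ small are analytic, and by the monotonicity and the asymptotic expansion \eqref{eq:familybatinfinityu} we have the sandwich $\y_0(x)<\y_b(x)$, while an upper bound on $(-\infty,x_1]$ can be obtained from the representation in Proposition \ref{prop:ysatisfies} (which controls $(\y_b')^2$ in terms of $(\y_b^2+x)^2$ minus a positive integral, hence $\y_b$ cannot be too large without forcing a pole very quickly). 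This gives uniform bounds on $\y_b$ and $\y_b'$ on a neighborhood of $x_1$, uniformly in $b\in(0,b_0)$ for small $b_0$. Then standard ODE continuous dependence on initial data (Picard--Lindelöf with uniform Lipschitz bounds on a fixed box) shows that as $b\downarrow 0$, the initial data $(\y_b(x_1),\y_b'(x_1))$ converges to $(\y_0(x_1),\y_0'(x_1))$, and hence $\y_b\to\y_0$ uniformly on a fixed interval $[x_1,x_1+\delta]$ for some $\delta>0$ independent of $b$. But if $\omega_*<x_1+\delta$ we would need $\y_b$ to blow up inside $[x_1,x_1+\delta]$ for all small $b>0$, contradicting uniform convergence to the finite, pole-free function $\y_0$. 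Choosing $x_1$ close enough to $\omega_*$ (so that $\omega_*<x_1+\delta$, which one arranges by first fixing the box that determines $\delta$ near the point $\omega_*$, using that $\y_0$ is finite there) closes the argument.

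The main obstacle I anticipate is making the "uniform escape time" quantitative: one needs $\delta$ — the length of the interval past $x_1$ on which Picard iteration converges — to depend only on an a priori box $[x_1,x_1+1]\times[-M,M]$ for $\y$ and not on $b$, and simultaneously to know that for $b$ small the pole $\omega(b)$ lies inside that interval. The cleanest way is: pick any $x_1<\omega_*$; on the box $B=[x_1,x_1+1]\times[0,M]$ with $M:=2\sup_{[x_1,x_1+1]}\y_0+1$ (finite since $\y_0$ is pole-free), the field $(\y,\y')\mapsto(\y',2x\y+2\y^3)$ is Lipschitz with a constant $L=L(x_1,M)$, so there is $\delta=\delta(x_1,M)>0$ such that any solution starting in the lower face of $B$ stays in $B$ on $[x_1,x_1+\delta]$; since $\y_b(x_1)\to\y_0(x_1)<M$ and $\y_b\ge\y_0>0$ there, for small $b$ the solution $\y_b$ stays in $B$, in particular stays finite, on $[x_1,x_1+\delta]$, forcing $\omega(b)\ge x_1+\delta$. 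Letting $b\downarrow0$ gives $\omega_*\ge x_1+\delta>x_1$. Since $x_1<\omega_*$ was arbitrary, this yields $\omega_*\ge \omega_*+\delta_0$ for a fixed $\delta_0>0$ obtained by running the argument at, say, $x_1=\omega_*-\delta_0/2$ — impossible unless $\omega_*=+\infty$. I would also remark that the upper bound on $\y_b$ needed to set up $M$ uniformly is not strictly necessary if one instead phrases everything through the monotone limit $\y_\infty:=\lim_{b\downarrow0}\y_b$ (decreasing in $b$, bounded below by $\y_0$) and shows by dominated/monotone convergence in the integral equation $\y_b(x)=\y_b(x_1)+(x-x_1)\y_b'(x_1)+\int_{x_1}^x\int_{x_1}^t\y_b(\y_b^2+s)\,ds\,dt$ that $\y_\infty$ solves PII with the same data as $\y_0$, whence $\y_\infty=\y_0$ by uniqueness — and then $\y_0$ being pole-free past $x_1$ contradicts $\omega(b)$ staying bounded.
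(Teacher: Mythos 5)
Your route is genuinely different from the paper's. The paper disposes of this lemma in one line by invoking the known asymptotics \eqref{eq:stokesplusinf} at $+\infty$: for $b\neq 0$ the solution oscillates like a cotangent whose poles recede to $+\infty$ as $b\to 0$, so the first real pole goes to $+\infty$. You instead attempt a self-contained ODE argument: set $\omega_*=\lim_{b\downarrow 0}\omega(b)$, and use a uniform Picard escape time on a fixed box around $(\omega_*,\y_0(\omega_*))$ to force $\omega(b)\ge x_1+\delta$ for $x_1$ near $\omega_*$ and $b$ small. That skeleton is sound and would close the argument \emph{provided} $(\y_b(x_1),\y_b'(x_1))\to(\y_0(x_1),\y_0'(x_1))$ as $b\downarrow 0$, which is also needed to keep the box (and hence $\delta$) uniform as $x_1\uparrow\omega_*$.

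That convergence is exactly where the gap is. Continuous dependence on initial data cannot ``show that the initial data converges'' --- convergent initial data is its hypothesis, not its conclusion, and the solutions $\y_b$ share no initial data at any finite point, only asymptotics at $-\infty$. Your fallback, identifying $\y_\infty:=\lim_{b\downarrow 0}\y_b$ with $\y_0$ ``by uniqueness,'' also does not work as stated: the only data you establish for $\y_\infty$ is the leading behavior \eqref{eq:boundary1pre}, which does \emph{not} determine the solution --- the entire one-parameter family $\{\y_b\}$ shares it. To conclude $\y_\infty=\y_0$ you would need (i) to justify that $\y_\infty$ solves \eqref{eq:PIIalt}, which requires controlling $\y_b'(x_0)$ in the integral equation (local $C^1$-compactness from the two-sided bounds $\y_0\le\y_b\le\y_{b_1}$ can supply this, in the spirit of Lemma \ref{lem:btoinfty}), and (ii) to invoke the classification of real solutions satisfying \eqref{eq:boundary1pre} as exactly $\{\y_{b^*}\}_{b^*\in\mathbb R}$, together with the sign of the exponentially small term in \eqref{eq:familybatinfinityu} and Theorem \ref{thm:monotonicity}, to force $b^*=0$ from the sandwich $\y_0\le\y_\infty\le\y_b$. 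Finally, the upper bound you hope to extract from Proposition \ref{prop:ysatisfies} is not available: the integral $\int_{-\infty}^x(\y_b^2+s)\,ds$ is negative throughout region I, so the identity yields a lower, not an upper, bound on $(\y_b')^2$ there. Without the uniform-in-$b$ control of $\y_b$ near $\omega_*$ that $\y_\infty=\y_0$ (plus Dini's theorem) would provide, the escape time $\delta$ degenerates as $x_1\uparrow\omega_*$ and the final contradiction does not close.
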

\begin{proof}
	This is known. Indeed, for $b=0$ the Hastings-McLeod solution has no poles, but for $b>0$ we have poles coming in from $+ \infty$ as follows from  \eqref{eq:stokesplusinf}
\end{proof}
\begin{lemma} \label{lem:btoinfty} The following limit holds:
	$\omega(b) \to - \infty$ as $b \to + \infty$. 
\end{lemma}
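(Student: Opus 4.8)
The plan is to show that the first pole $\omega(b)$ is pushed off to $-\infty$ as $b\to+\infty$ by using the monotonicity from Theorem \ref{thm:monotonicity} together with the lower bound from Theorem \ref{thm:inequ} and the convexity structure of the four regions. The key mechanism is that a solution $\y_b$ with very large $b$ must be very large and steeply increasing at any fixed negative $x_0$ (by the asymptotic \eqref{eq:familybatinfinityu}, the gap $\y_b(x_0)-\y_0(x_0)$ grows linearly in $b$, and similarly for the derivative), and once $\y_b$ is in region II with a large positive slope, the convexity $\y_b''>0$ forces it to blow up quickly, hence to have a pole at a point that can be made arbitrarily negative.

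Concretely, I would proceed as follows. First, fix a reference point, say $x_0=-1$. From \eqref{eq:familybatinfinityu} (differentiated termwise, as justified in the proof of Theorem \ref{thm:monotonicity}) one cannot directly control $\y_b$ at $x_0=-1$ from the $x\to-\infty$ asymptotics alone, so instead I would pick, for each $b$, a point $x_b\to-\infty$ at which the asymptotic expansion \eqref{eq:familybatinfinityu} is already accurate and at which the exponential correction term $2^{1/3}\frac{b}{(-x)^{1/4}}\exp(-\frac43(-x)^{3/2})$ equals a fixed constant, say $1$; solving $\frac43(-x_b)^{3/2}\approx \log b$ gives $x_b\sim -(\tfrac34\log b)^{2/3}\to-\infty$. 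At $x_b$ we then have $\y_b(x_b)\ge \y_0(x_b)+\tfrac12 \ge \sqrt{-x_b/3}+\tfrac12$ and, differentiating the expansion, $\y_b'(x_b) \ge \y_0'(x_b) + c\,(-x_b)^{1/2}$ for some $c>0$ (the derivative of the exponential correction is of order $(-x)^{1/2}$ times the correction itself). Thus $\y_b'(x_b)$ is large and positive while $\y_b(x_b)>\sqrt{-x_b/3}$.

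Next I would run a convexity/blow-up argument on $(x_b,\omega(b))$. By Theorem \ref{thm:inequ}, $\y_b(x)\ge\sqrt{-x/3}$ on $(x_b,\min(\omega(b),0))$, and in particular $\y_b$ never drops below that curve, so either $\y_b$ has already left region I near $x_b$ or it does so shortly after; in any case, once $(x,\y_b(x))$ lies in region II we have $\y_b''(x)=2x\y_b+2\y_b^3>0$, so $\y_b'$ is increasing and $\y_b'(x)\ge \y_b'(x_b)$ (if $\y_b$ is still below $y=\sqrt{-x}$ at $x_b$ one first uses Lemma to cross into region II in a controlled time, then applies this). With $\y_b'$ bounded below by a large positive constant $M_b\to\infty$ and $\y_b\ge\sqrt{-x/3}$, one gets $\y_b''\ge 2\y_b^3\ge \tfrac{2}{3^{3/2}}\y_b |x|^{1/2}$... more simply, from $\y_b'(x)^2+2\int \text{(stuff)} = (\y_b^2+x)^2$ in Proposition \ref{prop:ysatisfies}, or just from $\y_b''\ge 2\y_b^3$ once $\y_b>\sqrt{-x}$, a standard comparison with the ODE $v''=2v^3$ (whose solutions blow up in finite time, with blow-up time decreasing as $v(x_b),v'(x_b)$ increase) shows $\omega(b)-x_b \le C/\sqrt{M_b}\to 0$. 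Hence $\omega(b)\le x_b + o(1)\to-\infty$, which is the claim.

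The main obstacle I anticipate is the bookkeeping in the transition step: ensuring that $\y_b$ actually enters region II (i.e. crosses $y=\sqrt{-x}$) within a distance of $x_b$ that is negligible compared to $|x_b|$, and doing so with its large derivative essentially intact. This requires combining the lower bound $\y_b\ge\sqrt{-x/3}$, the Lemma showing region I cannot be re-entered from region II, and a quantitative version of Lemma \ref{cor:yPrimeBnd} (which controls $\y_b'$ from above by $\y_b^2+x<0$ in region I) to pin down where the crossing happens; then the finite-time blow-up comparison for $v''=2v^3$ is routine. A clean alternative for the blow-up step is to integrate $\y_b''\ge 2\y_b^3$ after multiplying by $\y_b'$ to get $(\y_b')^2\ge \y_b^4+(\text{const})$, then separate variables: $\int^{\infty}\frac{d\y}{\sqrt{\y^4+\text{const}}}<\infty$ gives the finite pole distance explicitly, with the bound shrinking to $0$ as the initial data grows.
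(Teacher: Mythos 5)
Your route is genuinely different from the paper's. The paper argues softly: assuming $\omega_*=\lim_{b\to\infty}\omega(b)$ is finite, it uses the monotonicity of $b\mapsto \y_b$ together with monotone and dominated convergence (the latter enabled precisely by the lower bound $\y_b>\sqrt{-x/3}$) to show that the pointwise limit $\y_\infty$ would again satisfy the integrated form of \eqref{eq:PIIalt} on $(-\infty,\omega_*)$, contradicting the fact that every solution corresponds to a finite Stokes parameter. Your argument is instead quantitative and, if it worked, would even give a rate $\omega(b)\lesssim -(\log b)^{2/3}$.

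However, there is a genuine gap at the very first step. You evaluate the expansion \eqref{eq:familybatinfinityu} at a $b$-dependent point $x_b$ chosen so that the correction term $2^{1/3}b(-x)^{-1/4}\exp(-\tfrac43(-x)^{3/2})$ equals $1$, and conclude that $\y_b(x_b)\ge \y_0(x_b)+\tfrac12$ and that $\y_b'(x_b)$ is large and positive. But \eqref{eq:familybatinfinityu} is an asymptotic statement as $x\to-\infty$ for each \emph{fixed} $b$; nothing in the paper (or in the standard references) asserts that the $o(1)$ is uniform in $b$, and $x_b$ is chosen exactly where the one-term perturbation of $\y_0$ stops being small in the relevant sense: the next term of the transseries is of order $b^2\exp(-\tfrac83(-x)^{3/2})$, which at $x=x_b$ is comparable to the first correction, so the expansion cannot be trusted there. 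Since the largeness of $\y_b(x_b)$ and $\y_b'(x_b)$ is the engine of the whole argument, this is not a technicality. Without it, your blow-up machinery only shows that \emph{if} $\y_b(x_0)\to\infty$ at some fixed $x_0$ then poles appear near $x_0$; to exclude the alternative that $\y_b(x_0)$ stays bounded for every $x_0<\omega_*$ one is pushed back to the compactness/limit-is-a-solution argument, which is exactly the paper's proof. A secondary (fixable) error: in region II with $x<0$ one has $\y_b''=2\y_b(\y_b^2+x)<2\y_b^3$, so the comparison with $v''=2v^3$ is not available right after the crossing of $y=\sqrt{-x}$, where $\y_b^2+x$ is small and the convexity is weak; a cubic lower bound such as $\y_b''\ge \y_b^3$ only holds once $\y_b^2\ge -2x$, and the finite-distance blow-up estimate needs correspondingly more care.
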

\begin{proof}
	We start by recalling that $\y_b(x)$ can not converge to a solution of \eqref{eq:PIIalt} as $b \to \infty$. Indeed, as explained in Remark \ref{remark:Stokes} the set of solutions to the PII equation are parametrized by the so-called Stokes multipliers and $b$ is precisely one such multiplier. All these multipliers must be finite and thus any limit of $\y_b(x)$ as $b\to \infty$ must fall outside the set of solutions. 
	
	Since $\omega(b)$ is strictly decreasing, $\omega_*=\lim_{b \to +\infty} \omega(b)$ must exist in $\mathbb R\cup \{-\infty\}$. Assume that $\omega_* \in \mathbb R$. Then we remark that also $\y_b(x)$ defines a sequence of monotone increasing functions on $(-\infty,\omega_* )$ as $b \to + \infty$. Hence the limit 
	$$\y_\infty(x)=\lim_{b \to \infty} \y_b(x), \qquad x \in (-\infty, \omega_*),$$
	is a well-defined measurable function (possibly taking the value $\pm \infty$). Now we note that also 
	\begin{align*}
		\y_b(x)&=\y_b(x_0)+(x-x_0)\y'_b(x_0)+ \int_{x_0}^x \int_{x_0}^s \y_b''(z)dz ds\\
		&=\y_b(x_0)+(x-x_0)\y'_b(x_0)+ 2\int_{x_0}^x \int_{x_0}^s (z\y_b(z)+\y_b(z)^3) dz ds.
	\end{align*}
	for $x_0< \omega_*$. We will prove that 
	\begin{equation}
		\label{eq:ysatisfies}
		\y_\infty(x)=\y_\infty(x_0)+(x-x_0)\y_\infty'(x_0)+ 2\int_{x_0}^x \int_{x_0}^s (z\y_\infty(z)+\y_\infty (z)^3) dz ds,
	\end{equation}
	which shows that $\y_\infty(x)$ is a solution to \eqref{eq:PIIalt} and we thus arrived at a contradiction, and wwe must have $\omega_*=-\infty$ as stated. 
	
	To prove that $\y_\infty$ satisfies \eqref{eq:ysatisfies} we need to show that 
	$$ \lim_{b \to \infty} \int_{x_0}^x \int_{x_0}^s (z\y_b(z)+\y_b(z)^3) dz ds=\int_{x_0}^x \int_{x_0}^s (z\y_\infty(z)+\y_\infty (z)^3) du ds.$$
	Split the integrand as 
	$$ \y_b(z)(z+\y_b(z)^2)= \max\left(0,\y_b(z)(z+\y_b(z)^2)\right)+ \min\left(0,\y_b(z)(z+\y_b(z)^2)\right).$$
	Now  $\y_b(z)\geq 0$ and $\y_b(z)$ is strictly increasing in $b$,  and thus
	$$
	\max\left(0,\y_b(z)(z+\y_b(z)^2)\right)$$
	is strictly increasing for $b \to +\infty$. By the monotone convergence theorem we then have that 
	\begin{align*}
		\lim_{b \to \infty} \int_{x_0}^x \int_{x_0}^s \max&\left(0,\y_b(z)(z+\y_b(z)^2)\right) dz ds\\
		&=  \int_{x_0}^x \int_{x_0}^s \max\left(0,\y_\infty(z)(z+\y_\infty (z)^2)\right) dz ds,
	\end{align*}
	where the latter possibly takes the value $+ \infty$. 
	We also know that $\y_b(z) > \sqrt{-z/3}$ and thus 
	$$|\min\left(0,\y_b(z)(z+\y_b(z)^2)\right) | \leq \frac{-2z}{3 \sqrt 3}.$$ And thus by dominated convergence we find 
	\begin{align*}
		\lim_{b \to \infty} \int_{x_0}^x \int_{x_0}^s \min &\left(0,\y_b(z)(z+\y_b(z)^2)\right) dz ds\\
		&=  \int_{x_0}^x \int_{x_0}^s \min\left(0,\y_\infty(z)(z+\y_\infty (z)^2)\right) dz ds.
	\end{align*}
	and the latter takes a  finite value.
\end{proof}

\subsection{Proof of Theorems \ref{thm:ybounds} and \ref{thm:ineq}}

\begin{proof}
	Note that Theorem \ref{thm:ybounds} is equivalent to Theorem \ref{thm:ubounds}. Note that the monotonicity  follows  directly from Theorem \ref{thm:monotonicity}. The fact that $ b \mapsto \omega(b)$ is a bijection follows from Lemmas \ref{lem:bto0}
 and \ref{lem:btoinfty}, concluding the proof of Theorem \ref{thm:ubounds}.

	Note that Theorem \ref{thm:ineq} is equivalent to Theorem \ref{thm:inequ}. The latter follows by Theorem \ref{thm:monotonicity}, the positivity of $\y_0$ and Theorem \ref{thm:lowerBnd1} \ref{thm:ubounds}.
 	\end{proof}


\section{Proof of Theorem \ref{thm:last}}

In this section we prove Theorem \ref{thm:last} which states that there is a unique solution to the d-PII \eqref{eq:dPII} with boundary condition \eqref{eq:BC}.

 We start with the following lemma.

\begin{lemma} \label{lem:posdef}
	For $1\leq n$ consider symmetric the tridiagonal matrix $D=D(s_1,\ldots,s_{n-1})$ defined by 
	$$
	D_{jj}=\frac{\sqrt{s_{j-1}}+\sqrt{s_{j+1}}}{s_j^{3/2}}, \qquad j=1,\ldots, n-1,
	$$
	and 
	$$
	D_{j(j+1)}=-\frac{1}{\sqrt{s_j s_{j+1}}}, \qquad j=1,\ldots, n-1,
	$$
	where we have set $s_0=s_n=1$. Then $D$ is a positive definite  matrix.
\end{lemma}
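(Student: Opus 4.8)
The plan is to diagonalize out the weights $s_j^{-1/2}$ and reduce positive definiteness of $D$ to positive definiteness of a cleaner tridiagonal matrix. Concretely, let $S = \diag(s_1^{1/2}, \ldots, s_{n-1}^{1/2})$ and consider $\tilde D = S D S$. A direct computation gives $\tilde D_{jj} = (\sqrt{s_{j-1}} + \sqrt{s_{j+1}})/s_j^{1/2} \cdot s_j^{1/2} \cdot s_j^{-1} \cdot s_j \ldots$ — more carefully, $\tilde D_{jj} = s_j^{1/2} \cdot \frac{\sqrt{s_{j-1}}+\sqrt{s_{j+1}}}{s_j^{3/2}} \cdot s_j^{1/2} = \frac{\sqrt{s_{j-1}}+\sqrt{s_{j+1}}}{s_j^{1/2}}$, which is still not symmetric in an obvious telescoping way, so instead I would conjugate by $\diag(s_j^{1/2})$ only on one side or, cleaner, write $D = M^T M$ directly. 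The cleanest route: set $r_j = s_j^{-1/2}$ for $j = 0, \ldots, n$ and observe that $D$ is exactly the matrix of the quadratic form
\[
Q(v) = \sum_{j=0}^{n-1} \left( r_j v_j - r_{j+1} v_{j+1} \right)^2 \Big/ \text{(something)}
\]
— this is the step to get right. Let me instead propose: since $D_{j(j+1)} = -r_j r_{j+1}$ and $D_{jj} = r_{j-1} r_j^2/\sqrt{s_{j-1}}\ldots$ hmm. The honest approach is the following.

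\textbf{Step 1.} Verify by direct expansion that for any $v = (v_1, \ldots, v_{n-1}) \in \mathbb{R}^{n-1}$, writing $v_0 = v_n = 0$,
\[
v^T D v \;=\; \sum_{j=0}^{n-1} \frac{1}{\sqrt{s_j s_{j+1}}} \left( \frac{v_j}{s_j^{1/2}/s_j^{1/2}} \cdots \right)^2,
\]
i.e. find the correct squares. The natural guess, matching the off-diagonal term $-1/\sqrt{s_j s_{j+1}}$, is
\[
v^T D v = \sum_{j=0}^{n-1} \frac{1}{\sqrt{s_j s_{j+1}}}\left( \frac{v_{j+1}}{\sqrt{s_{j+1}}}\sqrt{s_{j+1}} - \ldots\right)^2;
\]
I would simply compute: the coefficient of $v_j^2$ in $\sum_{i=0}^{n-1} c_i (a_i v_i + b_i v_{i+1})^2$ must match $D_{jj}$ and the coefficient of $v_j v_{j+1}$ must match $2 D_{j(j+1)}$. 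Taking $c_j = 1/\sqrt{s_j s_{j+1}}$, $a_j = (s_{j+1}/s_j)^{1/4}$, $b_j = -(s_j/s_{j+1})^{1/4}$ gives off-diagonal $2 c_j a_j b_j = -2/\sqrt{s_j s_{j+1}}$, correct; and the $v_j^2$ coefficient is $c_{j-1} b_{j-1}^2 + c_j a_j^2 = \frac{1}{\sqrt{s_{j-1}s_j}}\sqrt{s_{j-1}/s_j} + \frac{1}{\sqrt{s_j s_{j+1}}}\sqrt{s_{j+1}/s_j} = \frac{1}{s_j} + \frac{1}{s_j}$? That is not $D_{jj}$ — so the weights need adjusting; I expect the correct choice is $c_j = s_j^{-1/2} s_{j+1}^{-1/2}$ times a factor, or $a_j, b_j$ involving $s^{\pm 1/2}$ rather than $s^{\pm 1/4}$. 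Pinning down these exponents is a short calculation.

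\textbf{Step 2.} Once the identity $v^T D v = \sum_{j=0}^{n-1} w_j (a_j v_j + b_j v_{j+1})^2$ with all $w_j > 0$ is established (using $s_j > 0$ for all $j$, including $s_0 = s_n = 1 > 0$), conclude $v^T D v \ge 0$, with equality forcing $a_j v_j + b_j v_{j+1} = 0$ for all $j = 0, \ldots, n-1$. Since $v_0 = 0$ and $a_0, b_0 \neq 0$, this forces $v_1 = 0$, then inductively $v_2 = \cdots = v_{n-1} = 0$ (the boundary term $j=0$ plus the telescoping chain). Hence $D$ is positive definite.

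\textbf{Main obstacle.} There is no deep obstacle; the only thing to get right is the exact algebraic form of the sum-of-squares decomposition — i.e. the correct powers of $s_j$ in the weights $w_j, a_j, b_j$ so that the diagonal entries $\frac{\sqrt{s_{j-1}}+\sqrt{s_{j+1}}}{s_j^{3/2}}$ come out correctly (the two summands $\sqrt{s_{j-1}}$ and $\sqrt{s_{j+1}}$ should come from the $j-1$ and $j$ terms of the sum respectively). I expect the right decomposition to be
\[
v^T D v = \sum_{j=0}^{n-1}\frac{1}{s_j^{1/2} s_{j+1}^{1/2}}\left(\frac{v_{j+1}}{s_{j+1}^{1/2}} \cdot s_{j+1}^{1/4}\cdot s_j^{?} - \cdots\right)^2,
\]
and I would finalize the exponents by matching coefficients as in Step 1; this is routine. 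Alternatively, if matching proves fiddly, a clean fallback is to note $D = B^T B$ where $B$ is the $n \times (n-1)$ bidiagonal matrix with $B_{jj} = s_j^{-3/4} s_{j-1}^{1/4}$ and $B_{(j+1)j} = -s_j^{-3/4} s_{j+1}^{1/4}$ (exponents again to be checked against $D_{jj}$ and $D_{j(j+1)}$), and then $v^T D v = \|Bv\|^2 \ge 0$ with equality iff $Bv = 0$ iff $v = 0$ since $B$ has full column rank (its columns, having staggered nonzero supports with $v_0 = v_n$ absent, are linearly independent).
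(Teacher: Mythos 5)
Your proposal is correct and takes essentially the same route as the paper, which writes $D$ as a sum of positive semi-definite rank-one $2\times 2$ blocks plus two positive corner diagonal terms --- precisely the Gram decomposition $D=B^TB$ of your fallback. The exponents you left ``to be checked'' are right: with $B_{jj}=s_{j-1}^{1/4}s_j^{-3/4}$ and $B_{(j+1)j}=-s_{j+1}^{1/4}s_j^{-3/4}$ one gets $(B^TB)_{jj}=(\sqrt{s_{j-1}}+\sqrt{s_{j+1}})/s_j^{3/2}$ and $(B^TB)_{j(j+1)}=-1/\sqrt{s_j s_{j+1}}$, and $B$ has full column rank since it is bidiagonal with nonzero diagonal entries.
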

\begin{proof}
	The matrix can be written as a sum of non-negative definite matrices that consists of $2\times2$. Indeed, set
	$$
	H_j=\begin{pmatrix}
		\frac{	\sqrt{s_{j+1}}}{s_j^{3/2}} & -\frac{1}{\sqrt{s_j s_{j+1}}} \\
		- \frac{1}{\sqrt{s_j s_{j+1}} }& \frac{	\sqrt{s_{j}}}{s_{j+1}^{3/2}}
	\end{pmatrix}.
	$$
	Then it is easily verified that $H_j$ is positive semi-definite. By writing $D$ as 
	$$
	D= \begin{pmatrix}
		\frac{1}{s_1^{3/2}} & 0\\ 
		0& 0
	\end{pmatrix}+\begin{pmatrix} H_1 & 0\\
		\\ 
		0& 0
	\end{pmatrix}+ \begin{pmatrix} 0 & 0& 0\\
		0& H_2 & 0\\ 
		0& 0 &0
	\end{pmatrix}+ \cdots +\begin{pmatrix} 0 & 0\\
		\\ 
		0& H_{n-2}
	\end{pmatrix}+\begin{pmatrix}
		0 & 0\\ 
		0& \frac{1}{s_{n-1}^{3/2}}
	\end{pmatrix}
	$$
	we see that $D$ is a sum of positive semi-definite matrices. This immediately implies that $D$ is positive semi-definite. Furthermore, from the sum representation it is also not hard to see that $x^TDx=0$ iff $x=0$. 
\end{proof}
We recall the definition of the Hamiltonian $H_n$ in \eqref{eq:aHamiltonian}. 
\begin{proposition} \label{prop:last}
	The Hamiltonian $H_n$ has a unique minimizer on the space $a_k\in (0,1)$ for $k=0,\ldots, n-2$ with boundary conditions $a_{-1}=a_{n-1}=1$, and that minimizer occurs at its only critical point.
\end{proposition}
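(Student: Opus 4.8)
The plan is to show that $H_n$ is strictly convex on the open cube $(0,1)^{n-1}$, has a critical point there, and that this critical point is therefore the unique minimizer; one also checks no minimizing sequence escapes to the boundary. First I would compute the Hessian of $H_n$. Writing $H_n(a)=-\sum_{k=0}^{n-2}\frac{k+\shift}{t}\log(1-a_k^2)-\sum_{k=-1}^{n-2}a_ka_{k+1}$ with $a_{-1}=a_{n-1}=1$ fixed, the off-diagonal second derivatives are $\partial_{a_k}\partial_{a_{k+1}}H_n=-1$, and the diagonal ones are $\partial_{a_k}^2 H_n=\frac{k+\shift}{t}\cdot\frac{2(1+a_k^2)}{(1-a_k^2)^2}$, which is strictly positive on $(0,1)$. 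So the Hessian is a symmetric tridiagonal matrix with positive diagonal and $-1$ on the off-diagonals. The goal is to see it is positive definite. This is not quite Lemma \ref{lem:posdef} as stated, but it is the same mechanism: I would decompose the Hessian as a sum of $2\times2$ blocks $\begin{pmatrix} c_k & -1\\ -1 & d_{k+1}\end{pmatrix}$ plus leftover positive diagonal terms, and show each block can be chosen positive semidefinite because the diagonal weights $\frac{k+\shift}{t}\cdot\frac{2(1+a_k^2)}{(1-a_k^2)^2}$ are comfortably larger than what is needed (indeed each is $\geq \frac{2\shift}{t}>0$ and grows, so splitting the diagonal entry at index $k$ as a sum of contributions to the blocks on its left and right is easy). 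Strict positivity then follows as in the lemma from the fact that the quadratic form vanishes only at $0$. Alternatively, and perhaps more cleanly, I would substitute $a_k = $ (something) to reduce literally to the matrix $D$ of Lemma \ref{lem:posdef}: note that with $s_j$ chosen appropriately the structure matches, but since the diagonal of our Hessian is even larger than a matrix of type $D$ plus a positive diagonal, positive definiteness is inherited.

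Having established strict convexity, uniqueness of a critical point is automatic, and any critical point is the unique local and global minimizer. So the remaining substantive point is \emph{existence} of a critical point in the interior, equivalently that the infimum of $H_n$ over $(0,1)^{n-1}$ is attained in the interior and not approached only on the boundary. Here I would argue as follows: extend $H_n$ to the closed cube $[0,1]^{n-1}$, where it is lower semicontinuous with values in $(-\infty,+\infty]$ (the $-\log(1-a_k^2)$ terms blow up to $+\infty$ as any $a_k\to 1$). The cube is compact, so the infimum is attained at some point $a^*$. I claim $a^*$ lies in the open cube. It cannot have any coordinate equal to $1$, since there $H_n=+\infty$ while $H_n$ is clearly finite somewhere in the interior (e.g.\ at $a=0$, $H_n(0)=0$ minus the boundary coupling term $-a_{-1}a_0 - a_{n-2}a_{n-1} = 0$ since those involve $a_0=a_{n-2}=0$; in any case finite). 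It cannot have a coordinate equal to $0$ either: if $a^*_k=0$ for some $k$, I would perturb $a_k\mapsto \eps$ with all other coordinates fixed and compute the directional derivative of $H_n$ at $\eps=0^+$. The $-\log(1-\eps^2)$ term contributes $O(\eps^2)$, while the coupling terms $-\eps(a^*_{k-1}+a^*_{k+1})$ contribute $-\eps(a^*_{k-1}+a^*_{k+1})$; since the two neighbours (with the boundary convention $a_{-1}=a_{n-1}=1$) are nonnegative and at least one chain of reasoning shows they are not both zero at a minimizer — actually I would argue directly that at a minimizer all coordinates must be positive by a downward induction from the boundary: if $a^*_{n-2}=0$ then decreasing the ``cost'' by taking $a^*_{n-2}=\eps$ gains $-\eps\,a_{n-1}=-\eps<0$ to leading order, contradicting minimality; once $a^*_{n-2}>0$, the same perturbation at index $n-3$ gains $-\eps\,a^*_{n-2}<0$, and so on down to $a^*_0$ using $a_{-1}=1$. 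Hence $a^*\in(0,1)^{n-1}$, so $\nabla H_n(a^*)=0$, giving a critical point; by strict convexity it is the unique critical point and the unique minimizer.

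The main obstacle I expect is the Hessian positivity step — specifically, matching it to Lemma \ref{lem:posdef} or carrying out the $2\times2$ block decomposition with the correct bookkeeping of how much of each diagonal entry is allocated to the block on its left versus its right, and verifying strictness. The $k=0$ and $k=n-2$ endpoints need the ``extra'' pure-diagonal terms $\frac{1}{s_1^{3/2}}$ and $\frac{1}{s_{n-1}^{3/2}}$ in the lemma's decomposition, which correspond here to the fact that the fixed boundary values $a_{-1}=a_{n-1}=1$ remove a coupling but leave a diagonal contribution; I would make sure the accounting there is right. Once the Hessian is positive definite the rest is soft: compactness of $[0,1]^{n-1}$, lower semicontinuity, the boundary-exclusion perturbation argument, and the standard fact that a strictly convex function has at most one critical point. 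I would also remark that this simultaneously proves Theorem \ref{thm:last}, since $\nabla H_n(a)=0$ is exactly the d-PII recursion \eqref{eq:dPII} with boundary conditions \eqref{eq:BC}, so existence and uniqueness of the critical point is existence and uniqueness of the solution to the boundary value problem.
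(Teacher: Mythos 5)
There is a genuine gap, and it sits exactly where you flagged your "main obstacle": the Hessian of $H_n$ in the original variables $a_k$ is \emph{not} positive definite on all of $(0,1)^{n-1}$, so the direct convexity argument fails. Your computation of the Hessian is correct — tridiagonal with off-diagonal entries $-1$ and diagonal entries $d_k=\frac{k+\shift}{t}\cdot\frac{2(1+a_k^2)}{(1-a_k^2)^2}$ — but the claim that these diagonal entries are "comfortably larger than what is needed" is false. For the $2\times2$ block allocation to work you need products of adjacent (allocated) diagonal entries to be at least $1$, and for the matrix to be positive definite you need in particular the leading $2\times2$ minor $d_0d_1-1$ to be positive. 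Take $t=n$ large, $k=0,1$, and $a_0=a_1=\tfrac12$: then $d_0\approx \frac{4.4\shift}{n}$ and $d_1\approx\frac{4.4(1+\shift)}{n}$, so $d_0d_1-1<0$ for $n$ large. Hence $H_n$ is genuinely non-convex in the $a$-coordinates, and no bookkeeping of the block decomposition can rescue this; the lower bound $d_k\ge \frac{2\shift}{t}$ is tiny precisely in the regime of interest.

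The fix is the change of variables that you gesture at but do not carry out: set $s_{k+1}=a_k^2$. Then the coupling term becomes $-\sqrt{s_ks_{k+1}}$, whose $2\times2$ Hessian block is $\frac14\begin{pmatrix}\sqrt{s_{k+1}}\,s_k^{-3/2} & -(s_ks_{k+1})^{-1/2}\\ -(s_ks_{k+1})^{-1/2} & \sqrt{s_k}\,s_{k+1}^{-3/2}\end{pmatrix}$, which has determinant exactly zero and positive trace, hence is positive semidefinite with no condition on the size of the logarithmic terms — this is precisely the matrix $H_j$ in Lemma \ref{lem:posdef}. The transformed Hamiltonian $\tilde H_n(s)=-\sum_k\frac{k+\shift}{t}\log(1-s_k)-\sum_k\sqrt{s_ks_{k-1}}$ is then strictly convex (positive diagonal from the log terms plus $\frac14D$), and since $a\mapsto s$ is a diffeomorphism of the open cubes, uniqueness of the critical point of $\tilde H_n$ gives uniqueness for $H_n$ even though $H_n$ itself is not convex. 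Your compactness and boundary-exclusion arguments are sound (the downward induction showing no coordinate of the minimizer vanishes works in either coordinate system; in the $s$-variables the inward perturbation gain is of order $\sqrt{\eps}$, which is even more decisive), and your closing remark that this proves Theorem \ref{thm:last} is correct. But as written, the central convexity step does not go through without the substitution.
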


\begin{proof}
	Recall that 
	\begin{equation*}
		H_n(a)= -\sum_{k=0}^{n-2} \frac{k+\shift}{t}\log(1-a_k^2)-\sum_{k=0}^{n-1} a_ka_{k-1}
	\end{equation*}
	with boundary conditions $a_{-1} = a_{n-1} = 1$. After the change of variable $a_k=\sqrt{s_{k+1}}$ we see that 
	$$
	\tilde H_n(s)= -\sum_{k=1}^{n-1}\frac{k+\shift}{t}\log(1-s_k)-\sum_{k=1}^{n}\sqrt{ s_ks_{k-1}},
	$$
	for $s_k\in (0,1)$ for   $k=1,\ldots, n-1$ with boundary conditions $s_{0}=s_{n}=1$.  Any local minimizer for $H_n$  in the space $a_k\in [0,1]$ for $k=0,\ldots, n-2$ with boundary conditions $a_{-1}=a_{n-1}=1$, gives a local minimizer for $\tilde H_n$ on the space $s_k\in [0,1]$ for   $k=1,\ldots, n-1$ with boundary conditions $s_{0}=s_{n}=1$, and vice versa.

	By compactness and continuity, $\tilde H_n$ has a minimizer. Let $s^*$ be a minimizer. We will show that it is unique and lies in the interior $(0,1)^n$. For the latter, note that $\tilde H_n(s_1,\ldots,s_n)=+\infty$ if $s_k=1$ for some $k=1,\ldots,n$, and thus $s^*_k <1$ for $k=1,\ldots,n$. 
	We next show that also $s^*_k>0$ for $k=1,\ldots,n$. Suppose the opposite and let $k^*$ be the first integer such that $s^*_{k^*}=0$. Then $\partial \tilde H_n/ \partial s_{k^*}(s^*+\varepsilon e_{k^*})\sim -\sqrt{(s^*_{k-1}+s^*_{k^*+1})/\varepsilon}$ and thus  $ \tilde H_n(s^*+\varepsilon e_{k^*})<\tilde H_n(s^*)$ for sufficiently small $\varepsilon$, giving a contradiction.  We conclude that the minimizer $s^*$ for $\tilde H_n$ must be in the interior ${s^*_k}\in (0,1)$ for all $k=1,\ldots,n$.  It remains to show it is unique.  To this end, observe that the Hessian of $H_n$ is given by 
	$$
	\begin{pmatrix}
		\frac{\shift}{t (1-s_1)^2}&\\
		& 	\frac{1+\shift}{ t(1-s_2)^2}\\
		&& \ddots \\
		&&&	\frac{n-2+\shift}{t(1-s_{n-1})^2}
	\end{pmatrix}+\tfrac 14 D
	$$
	which is a sum of diagonal matrix with positive entries and $D$ is the matrix from Lemma \ref{lem:posdef}. It is thus positive-definite and therefore $\tilde H_n$ is a strictly convex function. Therefore the minimizer is unique.
	
	This also implies that $H_n$ has a unique minimizer in the interior of the domain. Since any critical point of $H_n$ corresponds to a critical point of $\tilde H_n$, we see that $H_n$ has a unique critical point minimizing $H_n$.
\end{proof}
We are now ready to prove Theorem \ref{thm:last}.
\begin{proof}[Proof of Theorem \ref{thm:last}]
This follows directly from Proposition \ref{prop:last} and the fact that $\nabla H_n=0$ is the same as the d-PII equation. 
\end{proof}

\section{Proof of Theorem \ref{thm:main}}
\label{sec:proofOfMain}

The main idea behind the proof of Theorem \ref{thm:main} is to first construct an approximate minimizer $\azero$ of the Hamiltonian $H_n$.  Indeed,  we will construct $\azero$ so that $\nabla H(\azero) = \mathcal{O}(n^{-4/3})$.  To prove that $a^{(0)}$ is close to the minimizer, we use the Kantorovich theorem for Newton's method \cite{Kant}, which gives sufficient conditions for ensuring that for a function $F(x)$ and a point $x_0$ there exists a solution to $F(x) = 0$ in a neighborhood of $x_0$. This will allow us to show that the true minimizer $a^*$ lies in the ball $B(a^{(0)},b n^{-2/3})$.  We then  conclude by observing that under the appropriate rescaling we have $n^{1/3}a^* \in B(n^{1/3}a^{(0)},b n^{-1/3})$, and so taking the limit as $n\to \infty$ we will have
\[
\lim_{n\to \infty, k/n^{1/3} \to x} n^{1/3}a^*_{n+k} = \lim_{n\to \infty} n^{1/3}\azero_{n+k}.
\]
From our explicit form of $a^{(0)}$ this limit can easily be computed and this will prove Theorem \ref{thm:main}.


\subsection{The approximate minimizer $\azero$ and proof of Theorem \ref{thm:main}}

It will be convenient for the rest of this paper to work with a shifted version of $\y_b$. As the position of the pole $\omega(b)$ is strictly monotone in $b$ we can invert this and consider $b$ to be a function of $\omega$. Define
\begin{equation} \label{eq:defnuu}
\Y_\omega(x) = \y_{b(\omega)}(x+\omega)
\end{equation}
where $\y$ satisfied \eqref{eq:PIIalt} with the given boundary conditions. The new $\Y_\omega$ has a pole at $x = 0$ for all choices of $\omega,$ and we indicate the $\omega$ dependence in the notation to emphasize this dependence. We will use this notation for a generic $\omega$ not just for the particular one introduced as a parameter in the model. Notice that $\Y_\omega$ solves the ODE
\begin{equation}
\label{eq:shiftedPII}
\Y''_{\omega}(x) = 2(x+\omega)\Y_\omega(x) + 2\Y_\omega^3(x).
\end{equation}

To define $\azero$ we begin by introducing the parameter
\begin{equation}
\label{eq:omegan}
\omega_n = \omega + \frac{\shift}{n^{1/3}}.
\end{equation}
The approximate minimizer of $H_n(a)$ is then constructed from a discretization of $\Y_{\omega_n}$ in the following way:
\begin{equation}
\label{eq:azero}
\azero_k = \frac{1}{n^{1/3}} \Y_{\omega_n} \left(\frac{k-n}{n^{1/3}}\right) - \frac{\omega_n}{3(k-n) n^{2/3}} - \frac{1}{2n}
\end{equation}
for $k = 1,\ldots, n-2$.  We  will show that $a^{(0)}$ is close to the unique critical point $a^*$ of $H_n$. But instead of working directly with  the gradient function $\nabla H_n(a)$, we will take a modification and consider $F$ defined by
\begin{equation}
	\label{eq:F}
	\left(F(a)\right)_k = -(1-a_k^2) \left(\nabla H_n(a)\right)_k.
\end{equation}
This modification still identifies the minimizer because for $a_k \in (0,1)$ we will have that 
\[
F(a) = 0 \quad \iff \quad \nabla H_n(a) = 0.
\]
The following theorem is the key to the proof of Theorem \ref{thm:main}.

\begin{theorem}
\label{thm:Fzero}
With $F$ as in \eqref{eq:F} and $a^{(0)}$ as in\eqref{eq:azero}, there exists a $c>0$ such that for each $n$ sufficiently large there exists an $a^* \in B(a^{(0)},c n^{-2/3})$ such that $F(a^*)=0$. 
\end{theorem}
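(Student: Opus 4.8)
The strategy is to apply the Kantorovich theorem for Newton's method to $F$ at the starting point $a^{(0)}$, so the whole proof reduces to verifying the three quantitative hypotheses of that theorem with constants that are uniform in $n$ (for $n$ large). Concretely, writing $\Gamma_0 = F'(a^{(0)})^{-1}$, one must produce bounds of the form $\|F(a^{(0)})\|_\infty \le \eta_n$, $\|\Gamma_0\|_\infty \le \beta_n$, and a Lipschitz bound $\|F'(a) - F'(b)\|_\infty \le L_n \|a-b\|_\infty$ on a ball around $a^{(0)}$, and then check that the Kantorovich quantity $h_n := \beta_n L_n \eta_n$ stays below the critical threshold ($\tfrac12$) as $n \to \infty$, with the resulting ball radius $\sim \beta_n \eta_n$ of order $n^{-2/3}$. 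The design of $a^{(0)}$ in \eqref{eq:azero} — the leading term is the discretized shifted Painlevé transcendent $\Y_{\omega_n}$, corrected by the $-\tfrac{\omega_n}{3(k-n)n^{2/3}}$ and $-\tfrac{1}{2n}$ terms — is exactly what is needed to make $F(a^{(0)})$ small; the first job is to Taylor-expand the difference equation around this discretization and confirm that these two correction terms kill the $n^{-2/3}$ and $n^{-1}$ errors, leaving $\|F(a^{(0)})\|_\infty = \mathcal O(n^{-4/3})$, i.e. $\eta_n = \mathcal O(n^{-4/3})$.

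First I would carry out the local truncation error analysis: plug $a^{(0)}$ into \eqref{eq:dPIIaa} (equivalently into $(\nabla H_n)_k$, then multiply by $1-(a^{(0)}_k)^2$ to get $F$), using that $\Y_{\omega_n}$ solves \eqref{eq:shiftedPII} with $\omega_n = \omega + \shift n^{-1/3}$ and that $t = n - 2^{-1/3}\sigma n^{1/3}$ enters through the coefficient $(k+\shift)/t$. A second-order Taylor expansion of $\Y_{\omega_n}$ at the grid points, together with the matching of $\omega_n$ to the $\shift$-shift in the recurrence and the $A(\xi) = \sqrt{1-\xi}$ outer behavior, should show that the discrete equation is satisfied up to $\mathcal O(n^{-4/3})$ uniformly in the relevant range of $k$. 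This requires care near both "ends" of the range — near $k \approx n$ where $\Y_{\omega_n}$ is large and varying on scale $n^{-1/3}$, and near the left end where the outer solution $\sqrt{1-k/n}$ governs the behavior and the correction term $-\tfrac{\omega_n}{3(k-n)n^{2/3}}$ is singular — so a matched-asymptotics bookkeeping across overlapping regions is needed; I expect the bound $\eta_n = \mathcal O(n^{-4/3})$ to hold but with constants that must be tracked honestly.

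The crux, however, is controlling $\|\Gamma_0\|_\infty = \|F'(a^{(0)})^{-1}\|_\infty$. Here $F'$ is a perturbation of the Hessian-type tridiagonal operator (diagonal plus the matrix $\tfrac14 D$ of Lemma \ref{lem:posdef}), and the danger flagged in the introduction is that $F'$ has eigenvalues that degenerate as $n \to \infty$ because of the Painlevé regime; a crude bound would give $\|\Gamma_0\|$ blowing up faster than $\eta_n^{-1}$ allows. The plan is to get an $L^\infty \to L^\infty$ (not $L^2$) bound on the inverse by a barrier/maximum-principle argument for the tridiagonal second-difference operator: the off-diagonal coupling is $\sim -\tfrac14 a_{k\pm1}/(\text{stuff})$ and the diagonal dominance margin is controlled from below using precisely the inequality $\yy_b(x) \ge \sqrt{-x/6}$ (equivalently $\y_b(x) \ge \sqrt{-x/3}$) of Theorem \ref{thm:inequ} — this is where that lower bound is "crucial", as the paper says, since it prevents the solution (hence $a^{(0)}_k$) from getting so small that the operator loses invertibility. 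One constructs an explicit positive supersolution $\phi_k$ (likely built from the outer profile $\sqrt{1-k/n}$ and a boundary-layer piece near $k=n$) with $(F'(a^{(0)})\phi)_k \ge c > 0$, which yields $\|\Gamma_0\|_\infty \le \|\phi\|_\infty/c$; I expect $\beta_n$ to be bounded, or at worst growing like a small power of $n$ that is comfortably absorbed. The Lipschitz constant $L_n$ for $F'$ on a ball of radius $\sim n^{-2/3}$ around $a^{(0)}$ is the easiest ingredient — $F$ is a fixed smooth function of the coordinates with the $(k+\shift)/t \sim 1$ coefficients bounded, and on that ball $a^{(0)}_k$ stays bounded away from $\pm 1$ in the relevant range, so $L_n = \mathcal O(1)$ (or a mild power of $n$) follows from differentiating the explicit expressions.

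\textbf{Main obstacle.} The genuinely hard step is the uniform-in-$n$ lower bound on $F'(a^{(0)})$ (equivalently the upper bound on $\|\Gamma_0\|_\infty$): one must choose the barrier function delicately and combine the outer region, where diagonal dominance is strong, with the inner Painlevé region near $k=n$, where the pole of $\Y_\omega$ at $x=0$ and the delicate cancellations in $1-(a^{(0)}_k)^2$ make the margin thin — and it is exactly there that Theorem \ref{thm:inequ} must be invoked to keep the margin positive. Once $\eta_n = \mathcal O(n^{-4/3})$, $\beta_n = \mathcal O(n^{\epsilon})$ for small $\epsilon$, and $L_n = \mathcal O(n^{\epsilon})$ are in hand, $h_n = \beta_n L_n \eta_n \to 0 < \tfrac12$, Kantorovich applies, and the zero $a^*$ of $F$ lies in $B(a^{(0)}, 2\beta_n\eta_n) \subset B(a^{(0)}, c n^{-2/3})$ for a suitable $c$, which is the claim; then, as in the roadmap, rescaling by $n^{1/3}$ and letting $n \to \infty$ along $k/n^{1/3} \to x$ replaces $a^*$ by $a^{(0)}$ in the limit and gives Theorem \ref{thm:main} from the explicit form \eqref{eq:azero}.
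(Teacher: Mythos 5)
Your overall architecture coincides with the paper's: apply the Kantorovich theorem at $a^{(0)}$, show $\|F(a^{(0)})\|_\infty=\mathcal O(n^{-4/3})$ by Taylor-expanding the discretized Painlev\'e solution (with the two correction terms killing the lower-order errors, handled separately in a bulk region and a Painlev\'e region near $k=n$), bound the inverse of $F'(a^{(0)})$ using the lower bound $\Y_\omega^2(x)>-(1{+}\delta)(x{+}\omega)/3$ from Theorem \ref{thm:inequ} to keep the diagonal-dominance margin positive, and bound the Lipschitz constant of $F'$. The paper implements the inverse bound via Varah's diagonal-dominance theorem rather than a barrier/supersolution, but these are essentially the same mechanism.

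There is, however, a genuine gap in your quantitative bookkeeping, and it sits exactly at the step you flag as the crux. Your hope that $\|F'(a^{(0)})^{-1}\|_\infty=\mathcal O(n^{\epsilon})$ is false: in the Painlev\'e region the margin $\gamma_k=2a^{(0)}_k(a^{(0)}_{k+1}+a^{(0)}_k+a^{(0)}_{k-1})+2\frac{x_k+\omega_n}{n^{2/3}}+\mathcal O(n^{-1})$ is genuinely of order $n^{-2/3}$ (the operator there is $n^{-2/3}$ times a discretization of the linearized PII operator on an interval of length $\mathcal O(n^{1/3})$), so $\|F'(a^{(0)})^{-1}\|_\infty\asymp n^{2/3}$. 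With the correct orders, the Kantorovich product — which contains \emph{two} factors of the inverse, one inside $\beta=\|\Gamma_0F(a^{(0)})\|$ and one inside the Lipschitz constant $M$ of $\Gamma_0F'$ (your formula $h_n=\beta_nL_n\eta_n$ with $\beta_n=\|\Gamma_0\|$ has only one) — becomes $\mathcal O(n^{2/3})\cdot\mathcal O(n^{-4/3})\cdot\mathcal O(n^{2/3})\cdot\mathcal O(1)=\mathcal O(1)$, which is borderline and does not verify $\beta M<\tfrac12$. The paper closes this gap by preconditioning with $D=\diag(a^{(0)})$: one shows $\|(D^{-1}F'(a^{(0)}))^{-1}\|_\infty\le Cn^{1/3}$ (dividing row $k$ by $a^{(0)}_k\sim n^{-1/3}$ inflates the margin in the Painlev\'e region from $n^{-2/3}$ to $n^{-1/3}$) while $\|D^{-1}(F'(a)-F'(b))\|_\infty\le C\|a-b\|_\infty$ still holds with $C=\mathcal O(1)$, because the entries of $F'(a)-F'(b)$ are themselves $\mathcal O(a^{(0)}_k\|a-b\|)$ precisely where the inverse is large. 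This asymmetric, region-dependent cancellation is what turns the $\mathcal O(1)$ product into $\mathcal O(n^{-1/3})$; your proposal does not identify the need for it, and without it (or an equivalent weighted barrier that is small in the Painlev\'e region) the argument does not close.
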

We postpone the proof of this theorem to the next subsection.

Note that it does not immediately follow that the point  $a^*$ from Theorem \ref{thm:Fzero}  is also the unique critical point (and  unique minimizer) of $H_n$ in the domain $a_k\in (0,1)$ for $k=0,\ldots, n-2$. Indeed, since the ball $B(a^{(0)},c n^{-2/3})$ lies partly outside our domain and it could be possible that $a^*_k \notin (0,1)$ for some $k$. In the following proposition we show that this does not happen and that $a^*$ is indeed the unique minimizer of $H_n$.

\begin{proposition}
	Let $a^*$ be the solution to $F(a) = 0$ that lies in the ball $B(a^{(0)},c n^{-2/3})$. Then for $n$ sufficiently large we have $0<a^*_k < 1$ for all $k = 0,1,...,n-2$.
\end{proposition}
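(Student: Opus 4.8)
The plan is to use the explicit form of $\azero$ together with the already-established properties of $\Y_{\omega_n}$ (equivalently $\y_{b(\omega)}$) to show that $\azero_k$ itself stays strictly between $0$ and $1$ with room to spare, and then to absorb the $cn^{-2/3}$ perturbation coming from Theorem \ref{thm:Fzero}. Concretely, I would split the index range into two regimes: a ``bulk'' regime where $n-k$ is of order $n$ (so $(k-n)/n^{1/3}$ is large and negative), and a ``Painlev\'e'' regime where $n-k = O(n^{1/3})$ (so $(k-n)/n^{1/3}$ stays in a fixed compact set to the left of the pole at $0$).

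In the Painlev\'e regime, write $\azero_k = n^{-1/3}\Y_{\omega_n}((k-n)/n^{1/3}) + O(n^{-2/3})$; since $\Y_{\omega_n}$ is continuous and bounded on compact subsets of $(-\infty,0)$, uniformly in $n$ large (because $\omega_n\to\omega$ and $b(\omega_n)\to b(\omega)$, and by Theorem \ref{thm:ubounds} the pole depends continuously on the parameter), we get $\azero_k = O(n^{-1/3})$, hence $0 < \azero_k < 1$ for $n$ large, and moreover $\azero_k \ge c' n^{-1/3}$ by the lower bound $\Y_{\omega_n} \ge \sqrt{-x/3}$ from Theorem \ref{thm:inequ}. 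The $cn^{-2/3}$ ball perturbation is then negligible against both the $O(n^{-1/3})$ lower bound and the gap to $1$. The one index that needs separate care is $k$ nearest to the pole (the largest $k$ in range, $k=n-2$): there $(k-n)/n^{1/3} = -2n^{-1/3}$ is close to the pole of $\Y_{\omega_n}$, so one uses the residue $-1$ expansion $\Y_\omega(x) = -1/x + O(1)$ near $x=0$ to see $\azero_{n-2} \approx n^{-1/3}\cdot(n^{1/3}/2) = 1/2$, which combined with the correction terms is still safely in $(0,1)$; the $-\omega_n/(3(k-n)n^{2/3})$ and $-1/(2n)$ terms were in fact designed to cancel the subleading terms of that expansion, so a short computation confirms $\azero_{n-2}$ is bounded away from $0$ and $1$.

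In the bulk regime, I would use the matching with the outer solution: for $x\to-\infty$ one has $\Y_{\omega_n}(x) = \y_{b(\omega_n)}(x+\omega_n) \sim \sqrt{-(x+\omega_n)}$, so that $n^{-1/3}\Y_{\omega_n}((k-n)/n^{1/3}) \sim \sqrt{(n-k)/n - \omega_n/n^{1/3}\cdot(1/n^{1/3})}\cdot$ — more precisely $\azero_k$ tracks $\sqrt{1-k/n}$ to leading order (this is exactly the function $A(\xi)=\sqrt{1-\xi}$ from the discussion before Theorem \ref{thm:ybounds}), with the two explicit correction terms accounting for the next orders. Since $\sqrt{1-k/n}\in(0,1)$ for $1\le k\le n-2$ and is bounded below by $\sqrt{2/n}$ at the worst end and bounded above by $\sqrt{1-1/n}<1$, and the corrections are lower order, one gets $\delta < \azero_k < 1-\delta/n$ type bounds; again the $cn^{-2/3}$ ball is absorbed. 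The only delicate point is the overlap/transition zone between the two regimes and the endpoint $k\to n$; there one must check the asymptotic expansion \eqref{eq:boundary1} of $\y_b$ at $-\infty$ is used with enough precision that the claimed error in $\nabla H(\azero)=O(n^{-4/3})$ (and hence the consistency of $\azero$ being an approximate minimizer) is compatible with $\azero_k$ staying in the open interval.

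The main obstacle, I expect, is the index $k=n-2$ adjacent to the pole: there $\azero_{n-2}$ is order one rather than order $n^{-1/3}$, so one cannot simply say ``$\azero_k$ is small''; instead one needs the precise local behavior of $\Y_\omega$ near its pole and must verify that the three terms in \eqref{eq:azero} combine to give a value in a compact subinterval of $(0,1)$ \emph{uniformly in $n$}, and then that a $cn^{-2/3}$ perturbation cannot push it out. A secondary obstacle is uniformity in $n$ of the bounds on $\Y_{\omega_n}$: one needs that as $\omega_n\to\omega$ the solution $\y_{b(\omega_n)}$ and its pole location vary continuously (so the compact sets on which $\Y_{\omega_n}$ is controlled can be taken $n$-independent), which follows from Theorem \ref{thm:ubounds} but should be invoked explicitly. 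Once these are in hand, the proposition follows by combining the regime-wise two-sided bounds on $\azero_k$ with $\|a^*-\azero\|_\infty \le c n^{-2/3} = o(n^{-1/3})$.
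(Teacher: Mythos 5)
Your lower bound $a^*_k>0$ is essentially the paper's argument: $\Y_{\omega_n}$ is bounded below by a positive constant $\delta$ on $(-\infty,0)$, so $\azero_k\geq \delta n^{-1/3}+\mathcal O(n^{-2/3})$, which dominates the radius $cn^{-2/3}$ of the Kantorovich ball. Likewise your treatment of the Painlev\'e window $n-k=\mathcal O(n^{1/3})$ (including $k=n-2$, where $\azero_{n-2}\approx 1/2$) is sound. The genuine gap is in the upper bound at the left edge. For small $k$ the leading term of $\azero_k$ is $n^{-1/3}\Y_{\omega_n}(x_k)=\sqrt{(n-k-\omega_n n^{1/3})/n}\,(1+\mathcal O(n^{-2}))$, so the gap $1-\azero_k\approx (k+\omega_n n^{1/3})/(2n)$ is at most of order $n^{-2/3}$ for $k=\mathcal O(n^{1/3})$ --- the \emph{same} order as the perturbation you want to absorb --- and when $\omega<0$ it is negative for $k<|\omega_n|n^{1/3}$, i.e.\ $\azero_k$ itself exceeds $1$ there. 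Your claimed ``$\azero_k<1-\delta/n$, and the $cn^{-2/3}$ ball is absorbed'' therefore fails twice: a gap of order $n^{-1}$ could never absorb a perturbation of order $n^{-2/3}$, and in general no positive gap exists at all. This is precisely why the paper warns that $B(\azero,cn^{-2/3})$ lies partly outside the domain: no argument based solely on $\|a^*-\azero\|_\infty\leq cn^{-2/3}$ can give $a^*_k<1$ near $k=0$.

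The repair is non‑perturbative and uses that $a^*$ solves the equation exactly. From $F(a^*)=0$ one has
\begin{equation*}
\bigl(1-(a^*_k)^2\bigr)\bigl(a^*_{k+1}+a^*_{k-1}\bigr)=2\,\frac{k+\shift}{t}\,a^*_k .
\end{equation*}
In the region where $\azero$ is close to $1$, the inclusion $a^*\in B(\azero,cn^{-2/3})$ guarantees $a^*_{k-1},a^*_k,a^*_{k+1}>0$, so the right-hand side is strictly positive; then $a^*_k=1$ would annihilate the left-hand side, and $a^*_k>1$ would force $a^*_{k+1}+a^*_{k-1}<0$. Both are impossible, whence $a^*_k<1$. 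You should keep your quantitative argument for the lower bound and for the Painlev\'e window, and replace the bulk upper bound by this equation-based step.
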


\begin{proof}
	We start by showing that $0<a^*_k$ for $n$ sufficiently large. Recall that $\Y_\omega(x)>0$ on $(-\infty,0)$, a statement that can be strengthened to $y_\omega (x)>\delta$ for some $\delta>0$ due to the boundary conditions of $\Y_\omega$. We then note that 
	\[
	\azero_k = \frac{1}{n^{1/3}}\Y_{\omega_n} \left(\frac{k-n}{n^{1/3}}\right) + \mathcal{O}(n^{-2/3}) > \frac{\delta}{n^{1/3}} + \mathcal{O}(n^{-2/3}).
	\]
	Then since $a^*\in B(a^{(0)},c n^{-2/3})$ we can choose $n$ sufficiently large so that $a^*_k >0$.
	
	Now to show $a^*_k<1$, begin by observing that $a^*_k$ is close to $1$ only for $k < n-tn^{1/3}$ for some $t$. Once we reach the region where $n-k = \mathcal{O}(n^{1/3})$ the approximate solution $\azero$ follows the behavior of the rescaled Painlev\'e solution and descends towards $0$. On this region it follows that $a^*\in B(a^{(0)},c n^{-2/3})$ ensures that $a^*_k< 1$.
	For $k$ small, i.e. the region where $\azero$ is close to $1$, we start by observing that
	\[
	\left(F(a)\right)_k = -(1-a_k^2) \left(\nabla H_n(a)\right)_k = (1-a_k^2)(a_{k+1} + a_{k-1}) - 2 \frac{k+\shift}{t} a_k.
	\]
	Since $a^*$ solves $F(a) = 0$ it follows that 
	\[
	\big(1-(a^*_k)^2\big)(a^*_{k+1} + a^*_{k-1}) - 2 \frac{k+\shift}{t} a^*_k = 0.
	\]
	Rewritten we have
	\[
	\big(1-(a^*_k)^2\big)(a^*_{k+1} + a^*_{k-1}) = 2 \frac{k+\shift}{t} a^*_k
	\]
	where the right hand side must be strictly positive since $a^*$ is close to $\azero$ which is close to $1$ and  $\shift\ge 0$. This means there is no solution where $a^*_k = 1$ or the left side would be $0$ and equality couldn't hold. Similarly we can never have $a^*_k>1$, otherwise we would need 
	\[
	a^*_{k+1} + a^*_{k-1}<0,
	\]
	meaning that at least one of $a^*_{k+1}$ and $a^*_{k-1}$ is negative. However this is impossible since $a^* \in B(a^{(0)},c n^{-2/3})$ and $\azero$ is close to $1$ in this region.
\end{proof}

We are now ready to prove the main result of this paper. 

\begin{proof}[Proof of Theorem \ref{thm:main}]
	Let $a^*$ be the unique minimizer of $H_n$ over the set $a_k\in (0,1)$ for $k=0,\ldots, n-2$ with $a_{-1}= a_{n-1}=0$ from Proposition \ref{prop:last}. Then with $a^{(0)}$ as \eqref{eq:azero} we have
	\[
	\lim_{n\to \infty, k/n^{1/3}\to x} n^{1/3} a^*_{n+k}  = \lim_{n\to \infty, k/n^{1/3}\to x} \left(n^{1/3} (a^*_{n+k} - \azero_{n+k}) +n^{1/3} \azero_{n+k}\right)
	\]
	By Theorem \ref{thm:Fzero} we have $a^*\in B(a^{(0)},c n^{-2/3})$ it follows that $n^{1/3} (a^*_{n+k} - \azero_{n+k}) = \mathcal{O}(n^{-1/3})$, so that term vanishes in the limit. That leaves us with 
	\[
	\lim_{n\to \infty, k/n^{1/3}\to x} n^{1/3}\azero_{n+k} = \lim_{n\to \infty, k/n^{1/3}\to x}\left( \Y_{\omega_n}\left(\frac{k}{n^{1/3}}\right) + \mathcal{O}(n^{-1/3})\right)
	\]
	Observe that $\omega_n \to \omega$ monotonically, which also give $b(\omega_n)\to b(\omega)$ monotonically. Since solutions of the PII are meromorphic in $b$ and $x$ we find:
	\begin{align*}
		\lim_{n\to \infty, k/n^{1/3}\to x}& \left(\Y_{\omega_n}\left(\frac{k}{n^{1/3}}\right)- \Y_{\omega_n}(x)\right) + \left(\y_{b(\omega_n)}(x+\omega_n) -\y_{b(\omega_n)}(x+\omega)\right)\\
		& \hspace{1cm} +\left(\y_{b(\omega_n)}(x+\omega)-\y_{b(\omega)}(x+\omega)\right) + \y_{b(\omega)}(x+\omega)\\
		&= \y_{b(\omega)}(x+\omega). 
	\end{align*}
	Here we are using the relationship $\Y_\omega(x) = \y_{b(\omega)}(x+\omega)$ for all $\omega \in (-\infty,\infty)$.
	This concludes the proof of Theorem \ref{thm:main}. 
\end{proof}
\subsection{Proof of Theorem \ref{thm:Fzero}}

The proof of Theorem \ref{thm:Fzero} will be a direct application of the  Kantorovich Theorem \cite{Kant} for Newtons methods in Banach spaces. There are many variations to this theorem and we refer to \cite{Yam} for a detailed discussion and  historical overview.  The version that we state here is a slight simplification of     Theorem 2.2 combined with Remark 2.3 in \cite{Yam}. 
\begin{theorem}
\label{thm:kantorovich}
Let $\mathcal B$ be a Banach space and $F:X \to \mathcal B$ a differentiable function on an open convex subset $X \subset \mathcal B$.  Let $x_0 \in X$ be such that $F(x_0)\neq 0$ and $(F'(x_0))^{-1}$ exists. Furthermore,  assume that there exists $\beta, M$ such that 

\begin{enumerate}[(1)]
	\item $\|\left(F'(x_0)\right)^{-1} F(x_0)\| \leq  \beta$, \label{item:1}
	\item $\|\left(F'(x_0)\right)^{-1} \left(F'(x)-F'(y)\right)\| \leq M\|x-y\|$ for all $x, y \in B(x_0,2\beta)\subset X$, \label{item:2}
	\item $\beta M < \frac{1}{2}$ \label{item:3}
\end{enumerate}
Then, as $k \to \infty$, the Newton iteration $x_{k+1}=x_k-F(x_k)^{-1} F(x_k)$ starting from $x_0$,  converges to an $x^* \in B(x_0,t^*)$ such that $F(x^*)=0$.  Here $t^*=2 \beta/(1+\sqrt{1-2\beta M})<2 \beta$.
\end{theorem}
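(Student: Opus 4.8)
The plan is to prove this by the classical Kantorovich majorizing--sequence method. First I would reduce to the case $F'(x_0)=I$: set $G=(F'(x_0))^{-1}F$, so that $G(x)=0\iff F(x)=0$ and the Newton iterates for $G$ coincide with those for $F$ (because $x-(G'(x))^{-1}G(x)=x-(F'(x))^{-1}F(x)$), while $G'(x_0)=I$ and the hypotheses turn into $\|G(x_0)\|\le\beta$ and $\|G'(x)-G'(y)\|\le M\|x-y\|$ on $B(x_0,2\beta)$. Alongside this I would introduce the scalar majorant
\[
\phi(t)=\tfrac{M}{2}t^2-t+\beta ,
\]
with $\phi(0)=\beta$, $\phi'(0)=-1$, $\phi''\equiv M$; using $\beta M<\tfrac12$ it has two positive roots, the smaller being $t^*=\frac{2\beta}{1+\sqrt{1-2\beta M}}<2\beta$, and the scalar Newton iteration $t_0=0$, $t_{k+1}=t_k-\phi(t_k)/\phi'(t_k)$ increases monotonically to $t^*$, with $t_1=\beta$ and (since $\phi$ is quadratic) $\phi(t_k)=\tfrac M2(t_k-t_{k-1})^2$ for $k\ge1$.

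The heart of the argument is an induction on $k$ establishing simultaneously that (i) $x_k$ is well defined with $\|x_k-x_0\|\le t_k\le t^*<2\beta$, (ii) $G'(x_k)$ is invertible with $\|G'(x_k)^{-1}\|\le\frac{1}{1-Mt_k}=-\frac{1}{\phi'(t_k)}$, and (iii) $\|x_{k+1}-x_k\|\le t_{k+1}-t_k$. For (ii) I would invoke the Neumann-series (Banach) lemma: since $\|G'(x_k)-I\|=\|G'(x_k)-G'(x_0)\|\le M\|x_k-x_0\|\le Mt_k\le Mt^*<2\beta M<1$, the operator $G'(x_k)$ is boundedly invertible with the stated estimate. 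For (iii), the defining relation of the Newton step, $G(x_{k-1})+G'(x_{k-1})(x_k-x_{k-1})=0$, combined with the fundamental theorem of calculus gives
\[
G(x_k)=\int_0^1\bigl(G'(x_{k-1}+\theta(x_k-x_{k-1}))-G'(x_{k-1})\bigr)(x_k-x_{k-1})\,d\theta ,
\]
so $\|G(x_k)\|\le\tfrac M2\|x_k-x_{k-1}\|^2\le\tfrac M2(t_k-t_{k-1})^2=\phi(t_k)$; together with (ii) this yields
\[
\|x_{k+1}-x_k\|\le\|G'(x_k)^{-1}\|\,\|G(x_k)\|\le\frac{\phi(t_k)}{-\phi'(t_k)}=t_{k+1}-t_k ,
\]
whence $\|x_{k+1}-x_0\|\le\sum_{j\le k}(t_{j+1}-t_j)=t_{k+1}\le t^*$, which closes the induction (the base case is $\|x_1-x_0\|=\|G(x_0)\|\le\beta=t_1-t_0$, using $x_1=x_0-G(x_0)$).

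Finally I would pass to the limit. By (iii), $\|x_m-x_k\|\le t_m-t_k$ for $m>k$, and since $t_k\uparrow t^*<\infty$ the sequence $(x_k)$ is Cauchy, hence convergent to some $x^*\in\overline{B(x_0,t^*)}\subset B(x_0,2\beta)$ by completeness of $\mathcal B$; the estimate $\|G(x_k)\|\le\tfrac M2\|x_k-x_{k-1}\|^2\to0$ and continuity of $G$ give $G(x^*)=0$, hence $F(x^*)=F'(x_0)G(x^*)=0$ (the degenerate case $M=0$ is immediate, since then $F$ is affine, one step is exact, and $t^*=\beta$). The step I expect to be most delicate is the bookkeeping that keeps \emph{every} iterate inside $B(x_0,2\beta)$, so that the Lipschitz hypothesis (2) is genuinely available at each stage; this is precisely where $\beta M<\tfrac12$ is used — both via $t^*<2\beta$ and via $Mt^*<1$ (needed for the invertibility bound) — and it is the reason the majorant $\phi$ has to be calibrated so exactly.
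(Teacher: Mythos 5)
Your proof is correct. Note that the paper does not actually prove this theorem: it is quoted from the literature (Kantorovich's original work and Yamamoto's survey, Theorem 2.2 and Remark 2.3 there), so there is no internal proof to compare against. Your argument is the classical affine-invariant majorizing-sequence proof: the normalization $G=(F'(x_0))^{-1}F$ correctly converts hypotheses (1)--(2) into $\|G(x_0)\|\le\beta$ and a genuine Lipschitz bound for $G'$ with $G'(x_0)=I$, the quadratic majorant $\phi(t)=\tfrac M2 t^2-t+\beta$ is calibrated so that its smaller root is exactly the stated $t^*$, and the induction (Neumann-series invertibility from $Mt_k\le Mt^*<2\beta M<1$, the integral remainder estimate $\|G(x_k)\|\le\tfrac M2\|x_k-x_{k-1}\|^2=\phi(t_k)$, and the resulting domination $\|x_{k+1}-x_k\|\le t_{k+1}-t_k$) is complete; in particular you correctly track that all iterates and the segments between them stay inside $B(x_0,2\beta)\subset X$, which is where condition (3) is genuinely needed. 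The only cosmetic remark is that the limit lands a priori in the closed ball $\overline{B(x_0,t^*)}$ rather than the open one, which matches the standard statement and is harmless here since $t^*<2\beta$.
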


We will also make use of the following bounds.

\begin{theorem}
\label{thm:Fbounds}
With $F$ as in \eqref{eq:F} and $a^{(0)}$ as in\eqref{eq:azero}, there exists a $c>0$ such that  
\begin{enumerate}[(i)]
	\item $F$ satisfies $$ \left\|\left(F'(a^{(0)})\right)^{-1}F(a^{(0)})\right\|_\infty \leq \frac c2 n^{-2/3}$$ \label{item:i}
	\item for $a,b \in B(a^{(0)}, cn^{-2/3})$, 
	$$
	\left\|\left(F'(a^{(0)})\right)^{-1}\left(F'(a))-F'(b)\right)\right\|_\infty\le M \|a-b\|_\infty
	$$
	where $M=\mathcal O(n^{1/3})$ as $n \to \infty$. \label{item:iii}
\end{enumerate}
\end{theorem}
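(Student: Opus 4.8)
The plan is to obtain the two estimates from the classical ``consistency times stability'' scheme for discretizations, with the stability half resting entirely on Theorem~\ref{thm:inequ}. First note from \eqref{eq:F} that $F'(a)$ is tridiagonal with
\[
(F'(a))_{kk}=-2a_k(a_{k-1}+a_{k+1})-2\tfrac{k+\shift}{t},\qquad (F'(a))_{k,k\pm1}=1-a_k^2,
\]
(with the obvious truncation at $k=0$ and $k=n-2$), so $-F'(a)$ has nonpositive off-diagonal entries, positive diagonal, and $k$-th row sum $\delta_k(a):=2\big(a_k^2+a_k(a_{k-1}+a_{k+1})-1+\tfrac{k+\shift}{t}\big)$. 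If one can show $\delta_k(\azero)>0$ for every $k$, then $-F'(\azero)$ is an irreducible, strictly row-diagonally-dominant $L$-matrix, hence a nonsingular $M$-matrix with $(-F'(\azero))^{-1}\ge0$ entrywise and $(-F'(\azero))^{-1}\delta(\azero)=\mathbf 1$; consequently
\[
\big\|(F'(\azero))^{-1}\big\|_\infty=\big\|(-F'(\azero))^{-1}\mathbf 1\big\|_\infty\le\frac1{\min_k\delta_k(\azero)} .
\]
Thus the entire stability question is reduced to a lower bound on $\min_k\delta_k(\azero)$.

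To produce this lower bound I would split the range of $k$ into a ``bulk'' regime $n-k\gg n^{1/3}$ and a ``Painlev\'e window'' $n-k=\mathcal O(n^{1/3})$. In the bulk, $\azero_k$ is close to $\sqrt{1-k/n}$, and a short computation gives $\delta_k(\azero)=4(1-k/n)+o(1-k/n)$, which is bounded below by a constant multiple of $1-k/n\gtrsim n^{-2/3}$. In the window, substituting \eqref{eq:azero} and expanding (using $\tfrac{k+\shift}{t}=1+(x_k+\omega_n)n^{-2/3}+\mathcal O(n^{-1})$ with $x_k:=(k-n)n^{-1/3}$) yields
\[
\delta_k(\azero)=2n^{-2/3}\Big(3\,\Y_{\omega_n}(x_k)^2+x_k+\omega_n\Big)+\mathcal O(n^{-1}),
\]
and here is the crucial point: since $\Y_{\omega_n}(x)=\y_{b(\omega_n)}(x+\omega_n)$ with $\omega(b(\omega_n))=\omega_n$, Theorem~\ref{thm:inequ} together with the positivity of $\y_{b(\omega_n)}$ on $(-\infty,\omega_n)$ gives $3\y_{b(\omega_n)}(y)^2+y>0$ for all $y<\omega_n$, hence $3\Y_{\omega_n}(x)^2+x+\omega_n>0$ for all $x<0$; as this quantity tends to $+\infty$ both as $x\to-\infty$ and as $x\to0^-$ it has a strictly positive minimum, which stays bounded below uniformly in $n$ because $\omega_n\to\omega$. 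So $\delta_k(\azero)\gtrsim n^{-2/3}$ on the window, and altogether $\min_k\delta_k(\azero)\gtrsim n^{-2/3}$, giving $\|(F'(\azero))^{-1}\|_\infty=\mathcal O(n^{2/3})$. This is precisely where the bound $\y_b(x)\ge\sqrt{-x/3}$ of Theorem~\ref{thm:inequ} is indispensable: without it $\delta_k(\azero)$ could change sign inside the window and the $M$-matrix argument would collapse.

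For the first estimate it then remains to prove the consistency bound $\|F(\azero)\|_\infty=\mathcal O(n^{-4/3})$; combined with the stability bound this gives $\|(F'(\azero))^{-1}F(\azero)\|_\infty\le\|(F'(\azero))^{-1}\|_\infty\|F(\azero)\|_\infty=\mathcal O(n^{-2/3})$, which is the claimed bound with $c=\mathcal O(1)$. The consistency estimate is where most of the (routine but lengthy) work sits: Taylor-expand the d-PII \eqref{eq:dPIIaa} about $\Y_{\omega_n}$, use \eqref{eq:shiftedPII} to cancel the leading $\mathcal O(n^{-1})$ term, and check that this cancellation survives uniformly in $k$ — the two correction terms $-\omega_n/(3(k-n)n^{2/3})$ and $-1/(2n)$ in \eqref{eq:azero} are exactly what removes the $\mathcal O(n^{-2/3})$ and $\mathcal O(n^{-1})$ residuals left by the bare discretisation $n^{-1/3}\Y_{\omega_n}(x_k)$ in the matching regime $x_k\to-\infty$ and in the bulk, after which the residual is $\mathcal O(n^{-4/3})$ everywhere. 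I expect this region-by-region bookkeeping — Painlev\'e window, matching region, bulk, with their three interacting scales and with $\Y_{\omega_n}$ growing like $\sqrt{-x_k}$ in the matching region — to be the main obstacle of the whole argument.

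For the second estimate, since $F$ is cubic in the $a_k$ with $\mathcal O(1)$ coefficients, each entry of $F'(a)-F'(b)$ in row $m$ is $\lesssim(\azero_m+\azero_{m\pm1}+n^{-2/3})\|a-b\|_\infty$ for $a,b\in B(\azero,cn^{-2/3})$; since $(-F'(\azero))^{-1}\ge0$ and $\azero\ge0$ this yields
\[
\big\|(F'(\azero))^{-1}(F'(a)-F'(b))\big\|_\infty\lesssim\Big(\big\|(-F'(\azero))^{-1}\azero\big\|_\infty+n^{-2/3}\big\|(-F'(\azero))^{-1}\mathbf 1\big\|_\infty\Big)\|a-b\|_\infty .
\]
The second term is $\mathcal O(1)\|a-b\|_\infty$ by the stability bound. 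For the first, the region-wise asymptotics of $\azero$ and of $\delta_k(\azero)$ give the pointwise inequality $\azero_k\le Cn^{1/3}\delta_k(\azero)$ for every $k$ (in the window $\azero_k/\delta_k\asymp n^{1/3}\,\Y_{\omega_n}(x_k)/(3\Y_{\omega_n}(x_k)^2+x_k+\omega_n)$, which is bounded; in the bulk $\azero_k/\delta_k\asymp(1-k/n)^{-1/2}\lesssim n^{1/3}$), and since $(-F'(\azero))^{-1}\delta(\azero)=\mathbf 1$ and $(-F'(\azero))^{-1}$ is monotone, $(-F'(\azero))^{-1}\azero\le Cn^{1/3}\mathbf 1$, so $\|(-F'(\azero))^{-1}\azero\|_\infty=\mathcal O(n^{1/3})$. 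Hence $M=\mathcal O(n^{1/3})$. With these two bounds the Kantorovich hypotheses close, since $\beta M=\mathcal O(n^{-2/3})\cdot\mathcal O(n^{1/3})=\mathcal O(n^{-1/3})\to0$; the conceptual content is entirely in the second paragraph, where the invertibility of $F'(\azero)$ with a controlled inverse is reduced to the Painlev\'e inequality $3\y_b(x)^2+x>0$.
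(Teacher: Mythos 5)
Your proposal follows essentially the same route as the paper: the same ``consistency times stability'' splitting, the same diagonal-dominance margin $\delta_k$ (this is exactly the paper's $\gamma_k$ in the proof of Proposition~\ref{prop:Fbounds2}), the same expansion $\gamma_k=2n^{-2/3}\bigl(3\Y_{\omega_n}^2(x_k)+x_k+\omega_n\bigr)+\cdots$, and the same decisive use of the lower bound $\y_b(x)>\sqrt{-x/3}$ from Theorem~\ref{thm:inequ} to keep this quantity bounded below by $cn^{-2/3}$. The differences are presentational: you derive $\|(F'(\azero))^{-1}\|_\infty\le 1/\min_k\delta_k$ from the $M$-matrix structure of $-F'$, where the paper quotes Varah's bound (Theorem~\ref{thm:inversebound}), which needs only diagonal dominance; and for part (ii) you replace the paper's preconditioning by $D=\diag(\azero)$ with the pointwise inequality $\azero_k\le Cn^{1/3}\delta_k$ plus monotonicity of $(-F'(\azero))^{-1}$ --- but this is literally the paper's estimate $\gamma_k^D=\gamma_k/\azero_k\ge cn^{-1/3}$ in Proposition~\ref{prop:Fbounds3}, so nothing is gained or lost. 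The one substantive omission is the consistency estimate $\|F(\azero)\|_\infty=\mathcal O(n^{-4/3})$ (the paper's Proposition~\ref{prop:Fbounds1}), which you correctly identify as the main technical burden but only sketch. In particular, ``Taylor-expand about $\Y_{\omega_n}$ and check uniformity in $k$'' does not go through verbatim in the window $n-k=\mathcal O(1)$: the derivatives of $\Y_{\omega_n}$ blow up at its pole at $x=0$, so the Taylor remainders are not uniformly $\mathcal O(n^{-4/3})$ there, and the paper has to pass to the regularized quantities $f_\omega(x)=x\Y_\omega(x)+1$ and $b_k=k\azero_{n+k}+1$ of Section~\ref{subsec:regionII} to control that region. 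This piece must be supplied before part (i) of the theorem is actually established; everything else in your argument is sound.
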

This theorem follows from a long technical argument and is proved in Section \ref{sec:Fboundsproof}.

\begin{proof}[Proof of Theorem \ref{thm:Fzero}]
It follows directly from Statement \textit{\ref{item:i}} in the previous theorem that $F$ satisfies condition \textit{\ref{item:1}} in the Kantorovich Theorem (\ref{thm:kantorovich}) in a neighborhood of $\azero$.

From \textit{\ref{item:iii}} we can see that $F$ satisfies \textit{\ref{item:2}} with $M = \mathcal{O}(n^{1/3})$. Finally to conclude we note that 
\[
\beta M = \mathcal{O}(n^{-1/3})<\frac{1}{2},
\]
if $n$ is sufficiently large. 

Since all the conditions of the Kantorovich theorem hold we have the desired conclusion for all big enough $n$ (i.e. when $\beta M < 1/2$).\end{proof}



\section{The approximate minimizer}
\label{sec:regionI}
It remains to prove Theorem \ref{thm:Fbounds}. In this section we will prepare its proof by showing that $\|F(a)\|_\infty$ is small. The  rest of the proof of Theorem \ref{thm:Fbounds} will then be given  in Section \ref{sec:Fboundsproof}.

Recall from  \eqref{eq:azero} that we defined
\[
\azero_k = \frac{1}{n^{1/3}} \Y_{\omega_n} \left(\frac{k-n}{n^{1/3}}\right) - \frac{\omega_n}{3(k-n) n^{2/3}} - \frac{1}{2n},
\]
with $\Y_{\omega_n}$ as in \eqref{eq:defnuu}.  This is a global solution, but the correction terms (the terms other than $\Y_{\omega_n}$) only begin to play a role as $k$ gets close to $n$, and were in fact introduced to improve the error in that region. This region where $k$ is close to $n$ will need to be handled separately. To this end we introduce two regions which will be studied separately:
\begin{itemize}
	\item Region I: $k = 0, \ldots, n- \lfloor \Rdelta n^{1/3}\rfloor$
	\item Region II: $k = n- \lfloor \Rdelta n^{1/3}\rfloor, \ldots, n-2$.
\end{itemize}

\begin{proposition}
\label{prop:Fbounds1}
There exists $c>0,$ uniform in $k$ and $n$ such that 
\[
\left( F(a^{(0)})\right)_k \le cn^{-4/3}.
\]
\end{proposition}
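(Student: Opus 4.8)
## Proof Proposal for Proposition 5.1 (bound on $F(a^{(0)})_k$ in Region I)

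The plan is to Taylor-expand the d-PII recursion around the continuous object $\Y_{\omega_n}$ and show that the leading discretization errors are cancelled by the correction terms $-\frac{\omega_n}{3(k-n)n^{2/3}}-\frac{1}{2n}$ that were built into $\azero$. Concretely, write $m = k-n \le -\lfloor\Rdelta n^{1/3}\rfloor$ (so $|m|$ is at least of order $n^{1/3}$ throughout Region I), set $h = n^{-1/3}$, and abbreviate $\nu(\cdot)=\Y_{\omega_n}(\cdot)$. From \eqref{eq:F} we have $(F(a))_k = (1-a_k^2)(a_{k+1}+a_{k-1}) - 2\frac{k+\shift}{t}a_k$. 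First I would substitute $t = n - 2^{-1/3}\sigma n^{1/3}$ and $\azero_k = h\,\nu(mh) - \frac{\omega_n}{3m}h^2 - \frac{h^3}{2}$ and expand everything in powers of $h$, keeping careful track that $1/m = \mathcal{O}(h)$ since $|m|\gtrsim h^{-1}$. The key analytic input is the Taylor expansion $\nu(mh\pm h) = \nu(mh) \pm h\nu'(mh) + \tfrac{h^2}{2}\nu''(mh) \pm \tfrac{h^3}{6}\nu'''(mh) + \mathcal{O}(h^4)$, valid with uniform constants because on Region I the argument $mh$ stays bounded away from the pole at $0$, so $\nu$ and its derivatives are uniformly bounded there (this uses that $\Y_{\omega_n}(x)$ is analytic on $(-\infty,0)$ with the Hastings–McLeod-type decay at $-\infty$, plus the monotone convergence $\omega_n\to\omega$, so the family is uniformly controlled).

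Next I would organize the expansion by order. The $\mathcal{O}(h)$ and $\mathcal{O}(h^2)$ terms of $(F(\azero))_k$ should vanish identically: the $\mathcal{O}(h)$ order just reproduces the leading balance, and the $\mathcal{O}(h^2)$ order, after using $t^{-1} = n^{-1}(1 + 2^{-1/3}\sigma n^{-2/3} + \cdots)$ and $\frac{k+\shift}{t} = \frac{n+m+\shift}{t}$, is exactly the statement that $\nu$ solves the shifted PII equation \eqref{eq:shiftedPII} — recall $\omega_n = \omega + \shift n^{-1/3}$ was chosen precisely so that the shift in the ODE matches the $\shift$ in the recursion and the $\mathcal{O}(n^{1/3})$ piece of $t$. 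At order $h^3$ one collects: the third-derivative term $\tfrac{h^3}{3}\nu'''$... actually $\tfrac{h^3}{6}\nu'''$ cancels between $a_{k+1}$ and $a_{k-1}$ by parity (only even derivatives survive in $a_{k+1}+a_{k-1}$), so the genuinely dangerous $\mathcal{O}(h^3)$ contributions come from (a) the $-\frac{h^3}{2}$ correction interacting with the linear term $-2\frac{k+\shift}{t}a_k \approx -2a_k$, (b) the $-\frac{\omega_n}{3m}h^2$ correction, which is itself of order $h^3$ since $1/m = \mathcal{O}(h)$, interacting with the same linear term, and (c) the subleading pieces of $\frac{k+\shift}{t}$. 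The claim — which I would verify by direct computation — is that the constants $\tfrac12$ and $\tfrac{\omega_n}{3}$ in the two correction terms are chosen exactly so that the total $\mathcal{O}(h^3)$ coefficient vanishes, leaving $(F(\azero))_k = \mathcal{O}(h^4) = \mathcal{O}(n^{-4/3})$ with a constant uniform in $k$ and $n$ over Region I.

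The main obstacle I anticipate is bookkeeping: one must simultaneously expand three small quantities ($h$, $1/m$, and $n^{-2/3}$ coming from $t^{-1}-n^{-1}$), and the uniformity in $k$ is delicate because near the right end of Region I, $|m|$ is as small as $\Rdelta n^{1/3}$, so $1/m$ is only $\mathcal{O}(n^{-1/3})$ rather than smaller, and the correction terms are genuinely needed; whereas for $|m|$ of order $n$ the correction terms are negligible and the bound comes from $\nu$ decaying. I would therefore split the uniform estimate into the subcase $|m| \ge \eta n$ (where $\nu(mh)\sim\sqrt{-mh}$ and its derivatives are small, and crude bounds suffice) and $\Rdelta n^{1/3} \le |m| \le \eta n$ (where the full Taylor cancellation argument is run, using that $\nu$ is uniformly smooth on the corresponding compact-away-from-$0$ set of arguments). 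A secondary point to be careful about is that $\nu$ has the square-root-type behavior $\nu(x) \sim \sqrt{-x}$ near $x \to -\infty$; its derivatives $\nu', \nu''$ decay, so the $\mathcal{O}(h^4)$ remainder in the Taylor expansion is in fact uniformly bounded (and even small) there, which is what makes the constant $c$ uniform all the way out to $k=0$.
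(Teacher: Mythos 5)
Your Region~I argument is essentially the paper's: Taylor-expand $\Y_{\omega_n}$ at $x_k=(k-n)/n^{1/3}$, observe that the three blocks $a_{k+1}-2a_k+a_{k-1}$, $-a_k^2(a_{k+1}+a_{k-1})$ and $2a_k(n-k-\omega_n n^{1/3})/t$ each contribute at order $n^{-1}$ a multiple of $\Y''-2(x+\omega_n)\Y-2\Y^3$, which vanishes by the shifted PII equation, and that the correction terms $-\tfrac{\omega_n}{3(k-n)n^{2/3}}-\tfrac{1}{2n}$ together with the finiteness of $\sup_{x\le-\Rdelta}|\Y^{(4)}|$ control the remainder at order $n^{-4/3}$. (Your order-labelling is shifted by one --- the PII cancellation happens at order $h^3=n^{-1}$, not $h^2$ --- and the paper does not achieve exact cancellation of all $n^{-4/3}$ terms, it merely bounds them uniformly; but these are cosmetic.)

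The genuine gap is that the proposition is a statement for \emph{all} $k=0,\dots,n-2$, and your proof, as you yourself flag in its title, only covers $k\le n-\lfloor\Rdelta n^{1/3}\rfloor$. For $k$ within $O(n^{1/3})$ of $n$ the argument collapses: there $x_k\to 0^-$, and $\Y_{\omega_n}$ has a simple pole at $0$, so $\Y_{\omega_n}(x_k)$ is of size $n^{1/3}/(n-k)$, $a^{(0)}_k$ is close to $1$ rather than small, and the derivatives $\Y',\dots,\Y^{(4)}$ are nowhere near uniformly bounded --- the Taylor remainder $n^{-4/3}\sup|\Y^{(4)}|$ is useless. This near-pole regime is not a routine extension; the paper devotes Section~\ref{subsec:regionII} to it, desingularizing via $f_\omega(x)=x\Y_\omega(x)+1$ (which satisfies the regular ODE \eqref{eq:ODEf}) and the discrete analogue $b_k=k\,a^{(0)}_{n+k}+1$, and then showing that the recursion written in the $b_k$ variables is an $O(n^{-4/3})$-accurate discretization of that ODE, with the correction terms in $a^{(0)}$ again engineered to kill the offending $1/n$ terms (Lemmas~\ref{lem:frombktof2}--\ref{lem:frombktof3}). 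Without some such desingularization your proof does not establish the proposition as stated.
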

This is proved in Subsection \ref{subsec:gradboundI} for $k$ in Region~I, and in Subsection \ref{subsec:regionII} for $k$ in Region~II.


\subsection{Bounding $F(\azero)$ on Region I}
\label{subsec:gradboundI}

For simplification we will use the following notations:
\begin{equation}
\Y(x) =\Y_{\omega_n}(x)  \qquad \text{ and } \qquad x_k = \frac{k-n}{n^{1/3}}.
\end{equation}
We will need the following lemma.
\begin{lemma}
	\label{lemma:secondDiff}
	For $k$ in Region I we have that
	\begin{enumerate}[(1)]
		\item \[
		\azero_{k+1} - 2\azero_k + \azero_{k-1}  = \frac{\Y''(x_k)}{n} + \mathcal{O}(n^{-5/3})
		\]
		
		\item \[
		\left(\azero_k\right)^2 \left(\azero_{k+1} + \azero_{k-1}\right) = 2\frac{\Y^3(x_k)}{n} + \mathcal{O}(n^{-5/3})
		\]
		
		\item \[
		2\azero_k\left(\frac{n-k -\omega_n n^{1/3} }{N}\right) = -2(x_k+\omega_n)\frac{\Y(x_k)}{n} + \mathcal{O}(n^{-4/3})
		\]
	\end{enumerate}
	as $n \to \infty$,  where the constants in the $\mathcal O$ terms is uniform in $k$.
\end{lemma}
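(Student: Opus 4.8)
The plan is to obtain each of the three estimates by Taylor expanding the smooth function $\Y = \Y_{\omega_n}$ around the point $x_k = (k-n)/n^{1/3}$, using that $\Y$ satisfies the ODE \eqref{eq:shiftedPII}. Throughout, $k$ in Region~I means $k \le n - \lfloor \Rdelta n^{1/3}\rfloor$, so $x_k \le -\Rdelta$ stays bounded away from the pole of $\Y$ at $0$; hence on the relevant range $\Y$ and all its derivatives are bounded, and the expansions are uniform in $k$. Also note $\omega_n = \omega + \shift n^{-1/3} = \mathcal O(1)$, and the correction terms in \eqref{eq:azero} are $\frac{\omega_n}{3(k-n)n^{2/3}} = \mathcal O(n^{-1}|k-n|^{-1})$ and $\frac1{2n}$; in Region~I we have $|k-n| \ge \Rdelta n^{1/3}$, so the first correction is $\mathcal O(n^{-4/3})$ and both corrections are $o(n^{-1})$ uniformly. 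Note also that adjacent grid points differ by $x_{k\pm 1} - x_k = \pm n^{-1/3}$.

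For part (1), I would write $\azero_{k+1} - 2\azero_k + \azero_{k-1}$ as $n^{-1/3}$ times the second difference of $x \mapsto \Y(x)$ at spacing $h = n^{-1/3}$, plus the second differences of the two correction terms. The standard centered second-difference formula gives $\Y(x_k + h) - 2\Y(x_k) + \Y(x_k - h) = h^2 \Y''(x_k) + \tfrac{h^4}{12}\Y^{(4)}(\xi)$ for some intermediate $\xi$; multiplying by $n^{-1/3}$ turns $n^{-1/3}\cdot h^2 = n^{-1/3}\cdot n^{-2/3} = n^{-1}$ into the main term $\Y''(x_k)/n$, and the remainder is $n^{-1/3}\cdot h^4 = n^{-5/3}$. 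The contributions of the correction terms: the second difference of $\frac1{2n}$ is exactly $0$, and the second difference of $\frac{\omega_n}{3(k-n)n^{2/3}}$ in $k$ is $\mathcal O(n^{-2/3}|k-n|^{-3}) = \mathcal O(n^{-2/3}\cdot n^{-1}) = \mathcal O(n^{-5/3})$ on Region~I (by the discrete analogue of the second derivative of $1/x$). Collecting these gives the claimed $\Y''(x_k)/n + \mathcal O(n^{-5/3})$, uniformly, since $\Y^{(4)}$ is bounded on $(-\infty, -\Rdelta]$ thanks to the asymptotics at $-\infty$ and analyticity.

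For parts (2) and (3), I would first record that $\azero_k = n^{-1/3}\Y(x_k) + \mathcal O(n^{-1})$ uniformly on Region~I (the corrections being $o(n^{-1})$ as noted). For part (2): $(\azero_k)^2 = n^{-2/3}\Y(x_k)^2 + \mathcal O(n^{-4/3})$, and $\azero_{k+1} + \azero_{k-1} = n^{-1/3}(\Y(x_{k+1}) + \Y(x_{k-1})) + \mathcal O(n^{-1})$. Taylor expanding, $\Y(x_{k+1}) + \Y(x_{k-1}) = 2\Y(x_k) + h^2\Y''(x_k) + \mathcal O(h^4) = 2\Y(x_k) + \mathcal O(n^{-2/3})$, so $\azero_{k+1} + \azero_{k-1} = 2n^{-1/3}\Y(x_k) + \mathcal O(n^{-1})$. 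Multiplying, $(\azero_k)^2(\azero_{k+1}+\azero_{k-1}) = n^{-2/3}\Y(x_k)^2 \cdot 2n^{-1/3}\Y(x_k) + \mathcal O(n^{-2/3}\cdot n^{-1}) + \mathcal O(n^{-4/3}\cdot n^{-1/3}) = 2\Y^3(x_k)/n + \mathcal O(n^{-5/3})$. For part (3): here I should first check that $\frac{n-k-\omega_n n^{1/3}}{N}$ — reading $N$ as $n$, consistent with the normalization elsewhere, I would flag that $t = n - 2^{-1/3}\sigma n^{1/3}$ and $\omega_n n^{1/3} = \omega n^{1/3} + \shift$, so that $\frac{n-k-\omega_n n^{1/3}}{n} = \frac{-(k-n)}{n} - \frac{\omega_n n^{1/3}}{n} = -n^{-2/3}x_k - n^{-2/3}\omega_n = -n^{-2/3}(x_k + \omega_n)$ exactly. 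Then $2\azero_k \cdot (-n^{-2/3}(x_k+\omega_n)) = 2(n^{-1/3}\Y(x_k) + \mathcal O(n^{-1}))\cdot(-n^{-2/3}(x_k+\omega_n)) = -2(x_k+\omega_n)\Y(x_k)/n + \mathcal O(n^{-2/3}\cdot n^{-1}) = -2(x_k+\omega_n)\Y(x_k)/n + \mathcal O(n^{-4/3})$, which is the stated bound (the weaker $\mathcal O(n^{-4/3})$ error reflecting the coarser correction-term estimate when multiplied by the $\mathcal O(n^{-2/3})$ prefactor).

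The main obstacle I anticipate is bookkeeping the exact prefactors and the role of the $t$-versus-$n$ discrepancy: since $t = n - 2^{-1/3}\sigma n^{1/3} \ne n$, the factor $\frac{k+\shift}{t}$ appearing in d-PII is not exactly $\frac{k+\shift}{n}$, and one must track whether the difference $t^{-1} - n^{-1} = \mathcal O(n^{-5/3})$ feeds an error that is genuinely $o(n^{-4/3})$ after multiplication by the $\mathcal O(1)$ terms; this is why $\omega_n$ in \eqref{eq:omegan} carries the shift $\shift n^{-1/3}$ and why the correction terms in \eqref{eq:azero} have the precise coefficients $\frac13$ and $\frac12$ — they are chosen to cancel the leading discretization error. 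A careful proof should verify that with these exact choices the residual errors in (1)–(3) are as claimed, and that all $\mathcal O$-constants can be taken uniform over $k$ in Region~I by bounding $\sup_{x \le -\Rdelta}|\Y^{(j)}(x)|$ for $j \le 4$ using the asymptotic expansion \eqref{eq:familybatinfinityu} and the boundedness of $\Y$ on compact subsets of $(-\infty,0)$.
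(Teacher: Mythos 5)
Your argument follows the paper's proof essentially step for step: Taylor expansion of $\Y_{\omega_n}$ at $x_k$ with step $n^{-1/3}$ and a uniform bound on $\sup_{x\le -\Rdelta}|\Y^{(4)}|$, exact cancellation of the $\tfrac1{2n}$ term in the second difference together with the explicit second difference of $(k-n)^{-1}$ for the other correction, and the product expansions based on $\azero_k=n^{-1/3}\Y(x_k)+\mathcal O(n^{-1})$ for parts (2) and (3). One reading issue: $N$ in part (3) is $t$, not $n$ (this is forced by the identity $1-\frac{k+\shift}{t}=\frac{n-k-\omega_n n^{1/3}}{t}$ used in the proof of Proposition \ref{prop:Fbounds1}); the resulting extra relative factor $1+\omega n^{1/3}/t=1+\mathcal O(n^{-2/3})$ then multiplies the main term and has to be tracked, which your version with $N=n$ sidesteps.

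The substantive weak point is the uniformity over Region I in parts (2) and (3): your bookkeeping treats $x_k+\omega_n$ and $\Y(x_k)$ as $\mathcal O(1)$, but on Region I $x_k$ ranges down to $-n^{2/3}$ and $\Y(x_k)\sim\sqrt{-x_k}$ up to order $n^{1/3}$. Concretely, in (2) the cross terms of the form $(n^{-1/3}\Y(x_k))\cdot r\cdot(n^{-1/3}\Y(x_k))$ with $r=\mathcal O(n^{-1})$ are of size $\Y(x_k)^2 n^{-5/3}\sim (n-k)n^{-2}$, and in (3) the term $r\cdot n^{-2/3}(x_k+\omega_n)$ is of the same size; these are $\mathcal O(n^{-5/3})$, respectively $\mathcal O(n^{-4/3})$, only when $n-k=\mathcal O(n^{1/3})$, respectively $\mathcal O(n^{2/3})$, not throughout Region I. (Also note $n^{-2/3}\cdot n^{-1}=n^{-5/3}$, not $n^{-4/3}$, in your part (3).) To be fair, the paper's proof has exactly the same soft spot — it asserts that $-2(x_k+\omega_n)n^{-5/3}$ is uniformly $\mathcal O(n^{-4/3})$ over Region I, which the bound $|x_k|\le n^{2/3}$ alone does not give — so your proposal is faithful to the paper; but as written neither argument establishes the claimed $k$-uniform constants deep in Region I, where the residual error is really of order $(n-k)/n^2$, and a fully rigorous treatment would have to carry this $(n-k)$-dependence into Proposition \ref{prop:Fbounds1} or restrict the range of $k$.
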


\begin{proof}[Proof of Lemma \ref{lemma:secondDiff} \textit{(1)}]
	We begin by using a Taylor estimate:
	\[
	\Y(x_{k\pm1}) = \Y(x_k) \pm \frac{\Y'(x_k)}{n^{1/3}} + \frac{\Y''(x_k)}{2n^{2/3}} \pm \frac{\Y'''(x_k)}{6n} + \frac{\Y^{(4)}(\xi)}{4! n^{4/3}}.
	\]
	This give us that:
	\[
	\frac{1}{n^{1/3}} \left(\Y\left(x_{k+1}\right)-2\Y\left(x_{k}\right)+ \Y\left(x_{k-1}\right)\right) = \frac{\Y''(x_k)}{n} + \mathcal{O}(n^{-5/3}).
	\]
	Here the error term may be bounded in absolute value for all $k$ in Region I by 
	\[
	n^{-5/3} \sup_{x\in (-\infty,-\Rdelta]}{|\Y^{(4)}(x)|}{},
	\]
	and from the asymptotics for $\nu(x)$  for $x \to -\infty$  the supremum must be finite. 
	This gives us the contribution from the main terms of $\azero_k$. For the correction terms we note that the final $-\frac{1}{2n}$ terms cancel perfectly, leaving us with:
	\begin{align*}
		\Bigg|\frac{\omega_n}{3n^{2/3}} &\left( \frac{1}{k+1-n} -   \frac{2}{k-n}+  \frac{1}{k-1-n}\right)\Bigg| \\
		& \hspace{1.5cm} =\Bigg| \frac{2\omega_n}{3n^{2/3}(k+1-n)(k-n)(k-1-n)}\Bigg| \le \frac{2|\omega_n|}{3n^{5/3}\Rdelta^3}
	\end{align*}
	for $k$ in Region I. 
\end{proof}

\begin{proof}[Proof of Lemma \ref{lemma:secondDiff} \textit{(2)}]
	The proof of this is similar to the proof of  \textit{(1)}. We Taylor expand the main terms to find that 
	\[
	\Y(x_{k+1})+\Y(x_{k-1}) = 2\Y(x_k) + \mathcal{O}(k^{-2/3}).
	\]
	Since the leading term of $\azero$ is on the order of $n^{-1/3}$ and the correction terms are on the order of $1/n$ it follows that any cross term of $(\azero_k)^2(\azero_{k+1}+ \azero_{k-1})$ other than the main terms will be $\mathcal{O}(n^{-5/3})$. Careful tracking of the form of these terms shows us that we can bound the magnitude of the error terms uniformly in $k$ for all $k$ in Region I.
\end{proof}

\begin{proof}[Proof of Lemma \ref{lemma:secondDiff} \textit{(3)}]
	We begin by recognizing that 
	\[
	\frac{n-k -\omega_n n^{1/3} }{N} = -\frac{x_k+\omega_n}{n^{2/3}}\left(1 + \frac{\omega_n n^{1/3}}{t}\right)
	\]
	Which gives us
	\[
	2\azero_k\left(\frac{n-k -\omega_n n^{1/3} }{t}\right) = -2(x_k+\omega_n)\frac{\Y(x_k)}{n} - \frac{\omega_n(x_k+\omega_n)}{3x_k n^{5/3}}  -2\frac{x_k+\omega_n}{n^{5/3}}+ \mathcal{O}(n^{-5/3}).
	\]
	We can check that the second term on the right-hand-side is also on the order of $\mathcal{O}(n^{-5/3}),$ however the third term will be $\mathcal{O}(n^{-4/3})$ for $k$ small (since $x_k = (n-k)/n^{1/3}$). The magnitude of this term can be bounded uniformly in $k$ by $Cn^{-4/3}$ for some constant $C$.
\end{proof}
Now we are for the proof of Proposition \ref{prop:Fbounds1} on Region I.

\begin{proof}[Proof of Proposition \ref{prop:Fbounds1} on Region I]
Recall that 
\[
 \left(F(a)\right)_k  = (1-a_k^2)(a_{k+1} + a_{k-1}) - 2 \frac{k+\shift}{t} a_k.
\]
We begin by observing that $ \left(F(a)\right)_k$ can be rewritten as
\[
 \left(F(a)\right)_k =  a_{k+1} - 2a_k + a_{k-1}  - a_k^2 (a_{k+1} + a_{k-1}) + 2a_k\left(\frac{n-k -\omega_n n^{1/3} }{t}\right),
\]
where $t = n - \omega n^{1/3}$ and $\omega_n = \omega + \frac{\shift}{n^{1/3}}$ as before.

Lemma \ref{lemma:secondDiff} gives us that 
\[
 \left(F(\azero)\right)_k = \frac{1}{n}\left(\Y''(x_k) - 2\Y(x_k)(x_k + \omega_n) - 2\Y^3(x_k)\right) + \mathcal{O}(n^{-4/3})
\]
Now since $\Y$ satisfies \eqref{eq:shiftedPII} we find that the main terms cancel perfectly giving us the desired proposition.
\end{proof}


\subsection{Bounding $F(\azero)$ on Region II}
\label{subsec:regionII}

On Region II, not only do the correction terms become relevant, but we also have that $\Y_\omega(x)$ is close to a pole. Indeed the behavior of $\Y_\omega(x)$ near $0$ is given by the following lemma:
\begin{lemma}
\label{lemma:yexpansion}
The function $\Y_\omega$ has an expansion 
\[
\Y_\omega(x)  = -\frac{1}{x} \left(1 - \frac{\omega}{3}x^2 - \frac{x^3}{2} + \sum_{k=4}^\infty c_k x^k\right).
\]
\end{lemma}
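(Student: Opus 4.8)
The plan is a standard Painlev\'e--type Laurent-expansion argument. By the definition \eqref{eq:defnuu} of $\Y_\omega$ and Theorem~\ref{thm:ubounds}, the function $\Y_\omega$ is meromorphic near $x=0$ with a simple pole there of residue $-1$; hence it has a Laurent expansion $\Y_\omega(x)=\sum_{j\ge -1}a_j x^j$, convergent on some punctured disk, with $a_{-1}=-1$. The task is then to pin down $a_0,a_1,a_2$, to show that every further coefficient $a_j$ is produced by an explicit recursion, and finally to rewrite the series in the claimed form with $c_k=-a_{k-1}$.

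First I would substitute the ansatz into the shifted equation \eqref{eq:shiftedPII} and equate coefficients of powers of $x$. Using $\Y''_\omega=\sum_j j(j-1)a_j x^{j-2}$, the lowest order $x^{-3}$ gives $2a_{-1}=2a_{-1}^3$, i.e. $a_{-1}^2=1$, so $a_{-1}=-1$ by the residue normalization. For $j\ge 0$, equating the coefficient of $x^{j-2}$ produces a relation of the form
\[
\bigl(j(j-1)-6\bigr)a_j=(j-3)(j+2)\,a_j=\rho_j(a_{-1},\dots,a_{j-1};\omega),
\]
where $\rho_j$ is an explicit polynomial in the previously determined coefficients and in $\omega$ (the $6a_j$ comes from the term of $2\Y_\omega^3$ in which two of the three factors supply the residue). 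For $j=0,1,2$ the indicial factor $(j-3)(j+2)$ is nonzero and $\rho_j$ involves only earlier data, so these relations force successively $a_0=0$, $a_1=\tfrac{\omega}{3}$, $a_2=\tfrac12$ --- exactly the three explicit terms in the claimed expansion.

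The step needing care is $j=3$, where the indicial factor $(j-3)(j+2)$ vanishes: this is the Painlev\'e resonance, and one must check that $\rho_3$ vanishes identically once $a_{-1},a_0,a_1,a_2$ have been substituted. A short computation confirms it does, so the local recursion imposes no constraint and $a_3$ is left free; this is precisely the free parameter responsible for the one-parameter family of PII solutions with a prescribed pole noted in Section~\ref{sec:state}. Its value is fixed by the global solution $\Y_\omega$ but is irrelevant to the statement. For every $j\ge 4$ one has $(j-3)(j+2)\ne 0$, so $a_j$ is uniquely and explicitly determined by $a_{-1},\dots,a_{j-1}$; convergence of the resulting series near $0$ is already guaranteed by the meromorphy of $\Y_\omega$ (alternatively it follows from a routine majorant estimate on the recursion). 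Factoring $-1/x$ out of $\Y_\omega(x)=-\tfrac1x+\tfrac{\omega}{3}x+\tfrac12 x^2+\sum_{j\ge 3}a_j x^j$ and setting $c_k=-a_{k-1}$ for $k\ge 4$ yields the asserted formula. The only genuine obstacle is the bookkeeping of the $\rho_j$ together with the verification that $\rho_3\equiv 0$; everything else is automatic.
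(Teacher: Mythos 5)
Your proposal is correct and takes essentially the same route as the paper: both start from the fact that $\Y_\omega$ is meromorphic with a simple pole of residue $-1$ at the origin, insert the Laurent ansatz into \eqref{eq:shiftedPII}, and read off the low-order coefficients (the paper states this in two lines, while you additionally spell out the indicial factor $(j-3)(j+2)$, verify the resonance $\rho_3\equiv 0$ at $j=3$, and note that $a_3$ is the free parameter). Your coefficient values $a_0=0$, $a_1=\omega/3$, $a_2=\tfrac12$ check out against the stated expansion.
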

\begin{proof}
Since $\Y_\omega$ is meromorphic with a simple pole at zero with residue $-1$ it follows that it has an expansion of the form 
\[
\Y_\omega(x)  = -\frac{1}{x} \left(1  + \sum_{k=1}^\infty c_k x^k\right).
\]
We can then determine the first coefficients by inserting this expansion into the equation above.
\end{proof}

In order to treat this region we will need to do consider transformed $\Y_\omega(x)$ and $\azero_k$ that remove this singularity. 

\subsubsection{Transforming the problem}

We begin by removing the singularity from $\Y_\omega(x)$:
\begin{lemma}
Let $\y$ be the solution to \eqref{eq:PIIalt} with the given boundary conditions. In particular recall that $\y$ has a simple pole at $\omega$, and set 
\begin{equation}
\label{eq:f}
f_{\omega}(x) = x\y(x+\omega) +1 = x\Y_\omega(x)+1.
\end{equation}
Then $f_{\omega}$ satisfies the following ODE:
\begin{equation}
\label{eq:ODEf}
x^2 f_\omega''(x) - 2x f_\omega'(x) = 2\big(f_\omega(x)-1\big)\Big(f_\omega^2(x) - 2f_\omega(x) + x^2(x+\omega)\Big)
\end{equation}
\end{lemma}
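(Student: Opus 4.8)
The plan is to derive the ODE \eqref{eq:ODEf} for $f_\omega$ directly from the shifted Painlevé equation \eqref{eq:shiftedPII} satisfied by $\Y_\omega$, via the substitution $f_\omega(x) = x\,\Y_\omega(x) + 1$. First I would record the derivatives: from $f_\omega = x\Y_\omega + 1$ we have $f_\omega' = \Y_\omega + x\Y_\omega'$ and $f_\omega'' = 2\Y_\omega' + x\Y_\omega''$. Hence $x^2 f_\omega'' - 2x f_\omega' = x^2(2\Y_\omega' + x\Y_\omega'') - 2x(\Y_\omega + x\Y_\omega') = x^3 \Y_\omega'' - 2x\Y_\omega$. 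Now substitute $\Y_\omega'' = 2(x+\omega)\Y_\omega + 2\Y_\omega^3$ from \eqref{eq:shiftedPII} to get
\[
x^2 f_\omega''(x) - 2x f_\omega'(x) = 2x^3(x+\omega)\Y_\omega + 2x^3\Y_\omega^3 - 2x\Y_\omega = 2x\Y_\omega\big(x^2(x+\omega) + x^2\Y_\omega^2 - 1\big).
\]
The remaining task is purely algebraic: rewrite the right-hand side in terms of $f_\omega$ using $x\Y_\omega = f_\omega - 1$. Then $x^2\Y_\omega^2 = (f_\omega-1)^2 = f_\omega^2 - 2f_\omega + 1$, so the bracket becomes $x^2(x+\omega) + f_\omega^2 - 2f_\omega + 1 - 1 = f_\omega^2 - 2f_\omega + x^2(x+\omega)$, while the prefactor $2x\Y_\omega = 2(f_\omega - 1)$. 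Combining these gives exactly
\[
x^2 f_\omega''(x) - 2x f_\omega'(x) = 2\big(f_\omega(x)-1\big)\Big(f_\omega^2(x) - 2f_\omega(x) + x^2(x+\omega)\Big),
\]
which is \eqref{eq:ODEf}. One should note that $f_\omega$ is analytic in a neighborhood of $x=0$ (indeed $f_\omega(0) = 1$, with Taylor coefficients read off from Lemma \ref{lemma:yexpansion}: $f_\omega(x) = 1 - \tfrac{\omega}{3}x^2 - \tfrac12 x^3 + \sum_{k\ge 4} c_k x^k$), so all the manipulations above, initially valid for $x \ne 0$ on the domain where $\Y_\omega$ is analytic, extend to $x=0$ by continuity.

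There is essentially no serious obstacle here; the only thing to be slightly careful about is bookkeeping of the $-1$ versus $+1$ constants when passing between $x^2\Y_\omega^2$ and $(f_\omega - 1)^2$, and the fact that the identity is first established away from the pole of $\y$ at $\omega$ (equivalently the point $x=0$ for $\Y_\omega$) and then propagated to that point using that $f_\omega$ is holomorphic there. This lemma is the starting point for the Region~II analysis, where working with the regular function $f_\omega$ rather than the singular $\Y_\omega$ is what makes the Taylor-expansion estimates tractable near $k \approx n$.
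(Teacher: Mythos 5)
Your verification is correct and is exactly the computation the paper intends (the paper's own proof is a one-line ``direct verification'' remark), so the approaches coincide. One small slip in your closing aside: since the residue of $\Y_\omega$ at $0$ is $-1$, we have $x\Y_\omega(x)\to -1$ and hence $f_\omega(0)=0$ rather than $1$, with expansion $f_\omega(x)=\tfrac{\omega}{3}x^2+\tfrac{x^3}{2}+\cdots$ as in Lemma~\ref{lemma:fexpansion}; this does not affect your argument, since all you use there is that $f_\omega$ is analytic at $x=0$.
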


\begin{proof}
The verification can be done by taking derivatives of $f_\omega$ and using \eqref{eq:PIIalt} to make the appropriate substitutions.
\end{proof}

Notice that the $f_\omega$ in the previous lemma satisfies also that 
\[
\frac{f_\omega(x) -1}{x} = \y(x+\omega) = \Y_\omega(x).
\]
This leads us to the natural introduction of $b_k$ defined by
\begin{equation}
\label{eq:bk}
\azero_{n+k} = \frac{b_k -1}{k}, \qquad \text{ or } \qquad b_k = k\azero_{n+k} +1
\end{equation}
where $k$ plays the role of $x$. We make this connection more concrete by defining $x_k = \frac{k}{n^{1/3}}$ which leads us to 
\[
b_k = f_{\omega_n} (x_k) - \frac{\omega_n}{3n^{2/3}} - \frac{k}{2n}.
\]
\begin{remark}
With this reindexing, Region II corresponds to $- \delta n^{1/3} \le k \le -2$. It's worth keeping in mind that $k$ will always be negative.
\end{remark}

\begin{lemma}
\label{lemma:fexpansion}
The function $f_\omega$ has the following expansion at $0$:
\begin{equation*}
f_\omega(x) = \frac{\omega}{3}x^2 + \frac{x^3}{2} + \mathcal{O}(x^{4}), 
\end{equation*}
and therefore for $k =  -\lfloor \delta n^{1/3}\rfloor,\ldots, -2,$
\[
|b_k| \le C_\delta \frac{k^2}{n^{2/3}}.
\]
\end{lemma}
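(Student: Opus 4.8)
The statement of Lemma~\ref{lemma:fexpansion} has two parts: first the analytic expansion of $f_\omega$ near $0$, and second the resulting bound on the discretized quantities $b_k$ in Region~II. The expansion is the heart of the matter, and it follows almost immediately from what we already have. Indeed, from \eqref{eq:f} we have $f_\omega(x) = x\,\Y_\omega(x)+1$, and Lemma~\ref{lemma:yexpansion} gives
\[
\Y_\omega(x) = -\frac1x\left(1 - \frac{\omega}{3}x^2 - \frac{x^3}{2} + \sum_{k=4}^\infty c_k x^k\right).
\]
Multiplying by $x$ and adding $1$, the constant term $-1$ coming from $x\cdot(-1/x)$ cancels the $+1$, and the remaining terms are exactly $\frac{\omega}{3}x^2 + \frac{x^3}{2} - \sum_{k\ge 4} c_k x^k$. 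Hence $f_\omega(x) = \frac{\omega}{3}x^2 + \frac{x^3}{2} + \mathcal{O}(x^4)$ as $x\to 0$, with a convergent power series in a fixed neighborhood of $0$ (since $\Y_\omega$ is meromorphic with a simple pole at $0$, $f_\omega$ is analytic there).

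**Passing to the bound on $b_k$.** Recall from the discussion following \eqref{eq:bk} that
\[
b_k = f_{\omega_n}(x_k) - \frac{\omega_n}{3 n^{2/3}} - \frac{k}{2n}, \qquad x_k = \frac{k}{n^{1/3}}.
\]
Using the expansion just established, $f_{\omega_n}(x_k) = \frac{\omega_n}{3}x_k^2 + \frac{x_k^3}{2} + \mathcal{O}(x_k^4)$. Now $\frac{\omega_n}{3}x_k^2 = \frac{\omega_n}{3}\cdot\frac{k^2}{n^{2/3}}$, and $\frac{x_k^3}{2} = \frac{k^3}{2n} = \frac{k}{2n}\cdot k^2$; wait — more carefully, $\frac{x_k^3}{2} = \frac{1}{2}\cdot\frac{k^3}{n}$. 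The two correction terms in the definition of $b_k$ are designed to cancel the \emph{lowest-order} parts: $\frac{\omega_n}{3n^{2/3}}$ is of a different shape than $\frac{\omega_n}{3}\frac{k^2}{n^{2/3}}$, so these do not exactly cancel, but for $|k|\le \delta n^{1/3}$ we have $x_k$ bounded, so every term on the right is $\mathcal{O}(k^2/n^{2/3})$: indeed $\frac{\omega_n}{3}\frac{k^2}{n^{2/3}}$ is exactly of that size, $\frac{k^3}{2n} = \frac{k}{2n^{2/3}}\cdot\frac{k^2}{n^{1/3}}\cdot\frac{1}{1}$... the clean way to see it is $|x_k|\le \delta$ so $|f_{\omega_n}(x_k)| \le C\,x_k^2 = C\,k^2/n^{2/3}$, while $\frac{\omega_n}{3n^{2/3}} \le C/n^{2/3} \le C k^2/n^{2/3}$ (using $|k|\ge 2$, say, or simply $k^2\ge 1$... in fact $k^2 \ge 4$ in Region~II) and $|k/(2n)| = |k|/(2n) \le \delta n^{1/3}/(2n) \le C/n^{2/3} \le C k^2/n^{2/3}$. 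Combining, $|b_k|\le C_\delta\, k^2/n^{2/3}$.

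**Main obstacle and remaining care.** There is no deep obstacle here; the lemma is essentially bookkeeping once Lemma~\ref{lemma:yexpansion} is in hand. The one point requiring genuine attention is \emph{uniformity in $n$} of the implied constant in $f_{\omega_n}(x_k) = \mathcal{O}(x_k^4)$: the expansion of $f_\omega$ has coefficients $c_k = c_k(\omega)$ depending on $\omega$, and we are evaluating at $\omega = \omega_n = \omega + \shift\, n^{-1/3}$, which varies with $n$. However $\omega_n \to \omega$, so $\omega_n$ ranges over a compact set, and one checks (either from the recursion determining the $c_k$ via \eqref{eq:ODEf}, which depends polynomially and smoothly on $\omega$, or from the fact that the radius of convergence of the $f_\omega$-series is locally bounded below) that the $\mathcal O$-constant and the neighborhood of $0$ on which the expansion holds can be taken uniform for $\omega_n$ in a fixed compact neighborhood of $\omega$. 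Thus for $n$ large, and for all $|k|\le \delta n^{1/3}$ so that $|x_k|\le\delta$ with $\delta$ small enough to lie inside that common neighborhood, the estimate $|f_{\omega_n}(x_k)|\le C x_k^2$ holds with $C$ independent of $n$ and $k$, which is exactly what the proof above needs. Hence the stated bound $|b_k| \le C_\delta\, k^2/n^{2/3}$ follows.
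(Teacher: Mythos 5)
Your proof is correct and follows the same route as the paper: read off the expansion of $f_\omega$ from Lemma \ref{lemma:yexpansion} via $f_\omega(x)=x\Y_\omega(x)+1$, then bound $b_k$ by noting that $x_k$ stays in a compact neighborhood of $0$ where $f_{\omega_n}(x)=\mathcal O(x^2)$ and that the two correction terms are each $\mathcal O(k^2/n^{2/3})$. Your remark on uniformity of the constant in $\omega_n$ is a point the paper leaves implicit, and it is handled correctly.
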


\begin{proof}
The expansion of $f$ follow from the definition together with Lemma \ref{lemma:yexpansion}. The bound on $b_k$ follows because we need only consider the expansion of $f_\omega$ in a compact neighborhood of $0$.
\end{proof}

\subsubsection{Bounding $(F(a))_{n+k}$}

We begin by rewriting our function $F(a)$ and it's derivative in terms of our $b_k$'s. 

\begin{lemma} Using $x_k = \frac{k}{n^{1/3}}$ we can rewrite $\left(F(\azero)\right)_{n+k}$ in terms of the $b_k$'s as:
\begin{align}
\left(F(\azero)\right)_{n+k} &= \frac{(b_{k+1} - 2b_k+ b_{k-1})}{k} - \frac{b_{k-1}-b_{k+1}}{k^2} \notag \\
&\quad - \left[\frac{b_{k+1}+ b_{k-1}}{k}+\frac{b_{k-1}-b_{k+1}}{k^2}-\frac{2}{k}  \right]\frac{b_k^2-2b_k}{k^2-1} \notag\\
&\quad - 2 \frac{b_k-1}{k}\cdot \frac{x_k+\omega_n}{n^{2/3}}  + \mathcal{O}(n^{-4/3}), \label{eq:F(b)}
\end{align}
as $n \to \infty$,  where the constant in the $\mathcal O(n^{-4/3})$ term is uniform in $k = n- \lfloor \delta n^{1/3}\rfloor, \ldots, n-2$   (but can depend on $\delta$ and $\omega$).
\end{lemma}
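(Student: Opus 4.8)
The plan is to derive the displayed formula for $\left(F(\azero)\right)_{n+k}$ by a direct but careful substitution, reindexing everything that appears in the definition of $F$ in terms of the quantities $b_k = k\azero_{n+k}+1$. First I would recall from the previous subsection that
\[
\left(F(a)\right)_m = (1-a_m^2)(a_{m+1}+a_{m-1}) - 2\frac{m+\shift}{t}a_m,
\]
and, as was done on Region~I, rewrite this as
\[
\left(F(a)\right)_m = a_{m+1}-2a_m+a_{m-1} - a_m^2(a_{m+1}+a_{m-1}) + 2a_m\left(\frac{n-m-\omega_n n^{1/3}}{t}\right),
\]
using $t = n-\omega n^{1/3}$ and the definition of $\omega_n$. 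Setting $m = n+k$ and substituting $\azero_{n+j} = (b_j-1)/j$ for $j \in \{k-1,k,k+1\}$ turns each of the three groups of terms into a rational expression in $b_{k-1},b_k,b_{k+1}$ and $k$.

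Next I would treat each of the three groups separately. For the discrete Laplacian $\azero_{n+k+1}-2\azero_{n+k}+\azero_{n+k-1}$, writing each term over a common structure and expanding $\frac{1}{k\pm 1} = \frac{1}{k}\mp\frac{1}{k^2}+\mathcal{O}(k^{-3})$ produces the leading pieces $\frac{b_{k+1}-2b_k+b_{k-1}}{k}$ and $-\frac{b_{k-1}-b_{k+1}}{k^2}$, with the discarded terms being $\mathcal{O}(k^{-3}\,b_{k\pm 1})$; since Lemma \ref{lemma:fexpansion} gives $|b_j| \le C_\delta j^2/n^{2/3}$ and $|k| \le \delta n^{1/3}$ throughout Region~II, this error is $\mathcal{O}(n^{-1}\cdot n^{-2/3}) = \mathcal{O}(n^{-5/3})$, hence absorbable into the claimed $\mathcal{O}(n^{-4/3})$. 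For the cubic term $a_{n+k}^2(a_{n+k+1}+a_{n+k-1})$, substituting gives $\frac{(b_k-1)^2}{k^2}\left(\frac{b_{k+1}-1}{k+1}+\frac{b_{k-1}-1}{k-1}\right)$; expanding $\frac{1}{k\pm 1}$ and combining the $-1$'s from $b_{k\pm 1}-1$ and from $(b_k-1)^2 = b_k^2-2b_k+1$ yields, after collecting, the factor $\frac{b_k^2-2b_k}{k^2-1}$ multiplied by the bracket $\left[\frac{b_{k+1}+b_{k-1}}{k}+\frac{b_{k-1}-b_{k+1}}{k^2}-\frac{2}{k}\right]$; I would keep exact the factor $k^2-1$ rather than $k^2$ because this slight rewriting makes the error bookkeeping cleaner, and again estimate the omitted terms using the $b_j$ bound. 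Finally, the linear term $2\azero_{n+k}\frac{n-(n+k)-\omega_n n^{1/3}}{t}$ equals $2\frac{b_k-1}{k}\cdot\frac{-k-\omega_n n^{1/3}}{t}$; dividing through and writing $t = n-\omega n^{1/3} = n(1+\mathcal{O}(n^{-2/3}))$ and $x_k = k/n^{1/3}$, one isolates the principal term $-2\frac{b_k-1}{k}\cdot\frac{x_k+\omega_n}{n^{2/3}}$ with the remainder controlled by the same bounds.

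The main obstacle I anticipate is the uniformity of the error term over all of Region~II, i.e. over $k = n-\lfloor \delta n^{1/3}\rfloor,\ldots,n-2$, equivalently the reindexed range $-\delta n^{1/3} \le k \le -2$. The delicate point is that $k$ ranges over quantities as small as $2$ in absolute value and as large as $\delta n^{1/3}$, so estimates of the form $\mathcal{O}(k^{-3})$ are large for small $|k|$ while $|b_j|$ is small there, and conversely. One must check that in every discarded term the powers of $k$ in the denominator are compensated by the growth $|b_j| \le C_\delta j^2/n^{2/3}$ (and, where a term has no $b$-factor, that the $1/k$-type singularity is genuinely absent from the combination, as happens because the constant $-\frac{1}{2n}$ and $-\frac{\omega_n}{3n^{2/3}}$ pieces of $b_k$ conspire — cf. the cancellation of the $-\tfrac{1}{2n}$ terms observed in the proof of Lemma \ref{lemma:secondDiff}). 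Concretely, the worst terms are cubic in the $b$'s divided by $k^{4}$ or so; using $|b_j|^3/|k|^4 \le C_\delta^3 |k|^{6}/(n^{2}\,|k|^4) = C_\delta^3 |k|^2/n^2 \le C_\delta^3 \delta^2 n^{2/3}/n^2 = \mathcal{O}(n^{-4/3})$ shows this is exactly at the claimed threshold, which is why the statement has $\mathcal{O}(n^{-4/3})$ and not something smaller. I would organize the proof so that after the algebraic identity is established exactly (no errors), each leftover monomial in $b_{k-1},b_k,b_{k+1},1/k,1/n$ is bounded individually by $\mathcal{O}(n^{-4/3})$ using only $|b_j|\le C_\delta j^2 n^{-2/3}$, $|k|\le\delta n^{1/3}$, $|k|\ge 2$, and $|\omega_n|\le C$, and note that the constant depends on $\delta$ and $\omega$ only through $C_\delta$, $|\omega|$, exactly as stated.
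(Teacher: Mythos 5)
Your overall strategy (substitute $\azero_{n+j}=(b_j-1)/j$, reorganize, bound leftovers) is the right one, but the concrete error analysis in your second paragraph does not work, and the direction in which it fails matters. You propose to expand $\tfrac{1}{k\pm1}=\tfrac1k\mp\tfrac1{k^2}+\mathcal O(k^{-3})$ and to discard the remainder, estimating it by $\mathcal O(k^{-3}b_{k\pm1})=\mathcal O(n^{-1}\cdot n^{-2/3})$. That step uses $|k|^{-3}\le n^{-1}$, which is the \emph{reverse} of what holds in Region~II: there $2\le|k|\le\delta n^{1/3}$, so $|k|^{-3}\ge\delta^{-3}n^{-1}$ and can be as large as $1/8$. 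Concretely, the exact second difference is
\begin{equation*}
\azero_{n+k+1}-2\azero_{n+k}+\azero_{n+k-1}
=\frac{b_{k+1}-2b_k+b_{k-1}}{k}+\frac{b_{k-1}-b_{k+1}}{k^2}
+\frac{2(b_k-1)}{k(k^2-1)}+\frac{\Delta_k}{k(k^2-1)}+\frac{D_k}{k^2(k^2-1)},
\end{equation*}
with $\Delta_k=b_{k+1}-2b_k+b_{k-1}$ and $D_k=b_{k-1}-b_{k+1}$; the $b$-free piece $-2/\bigl(k(k^2-1)\bigr)$ is of order $1$ at $|k|=2$, and the $b_k$-piece is of order $n^{-2/3}/|k|$, neither of which is $\mathcal O(n^{-4/3})$. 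These large pieces cancel only against contributions from your "cubic" group, so approximating the three groups separately cannot give a uniform $\mathcal O(n^{-4/3})$ error down to $|k|=2$. (Your remark that the dangerous $1/k$-singularities cancel because of the $-\tfrac1{2n}$ and $-\tfrac{\omega_n}{3n^{2/3}}$ corrections is not the relevant mechanism here either.)

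The paper avoids the problem by never expanding $1/(k\pm1)$: it uses the exact identities \eqref{eq:beq1} and \eqref{eq:beq2} and observes that the factor $\tfrac{k^2-1}{k^2}$ coming from $1-(\azero_{n+k})^2$ cancels the $k^2-1$ denominator of $\azero_{n+k+1}+\azero_{n+k-1}$ exactly. With that cancellation the entire displayed formula is an exact algebraic identity in the $b_j$'s; the sole approximation producing the $\mathcal O(n^{-4/3})$ term is $\tfrac{n+k+\shift}{t}=1+\tfrac{k+\omega_n n^{1/3}}{n}+\mathcal O(n^{-4/3})$ (together with rewriting $t$ as $n$ in the last term, which is harmless since $\azero_{n+k}\cdot\tfrac{k+\omega_n n^{1/3}}{n}\cdot\mathcal O(n^{-2/3})=\mathcal O(n^{-4/3})$). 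Your closing suggestion to "establish the algebraic identity exactly and then bound each leftover monomial" is exactly the repair needed, but as written your proof would not reach that identity: you must carry out the multiplication $(\azero_{n+k+1}+\azero_{n+k-1})(1-(\azero_{n+k})^2)$ before splitting into groups, rather than Taylor-expanding the reciprocals group by group.
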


\begin{proof}
From the definition of $F(\azero)$ we get
\begin{align}
\left( F(\azero) \right)_{n+k} &= \left(\azero_{n+k+1} + \azero_{n+k-1}\right)(1-(\azero_{n+k})^2) - 2\azero_{n+k} \left(\frac{n+k + \shift}{t} \right)\notag \\
& = \left(\azero_{n+k+1} + \azero_{n+k-1}\right)(1-(\azero_{n+k})^2) - 2\azero_{n+k}\notag \\
& \qquad \qquad  - 2\azero_{n+k}\frac{k+\omega_n n^{1/3}}{n} + \mathcal{O}(n^{-4/3}). \label{eq:F(\azero)}
\end{align}
The final error term comes from
\[
\frac{n+k+\shift}{t} - 1 = \frac{k+ \omega_n n^{1/3}}{n}\left(1 - \frac{\omega n^{1/3}}{n+\omega n^{1/3}}\right).
\]

Now, to rewrite $F(\azero)$ in terms of the $b_k$'s we begin by rewriting individual pieces:
\begin{align}
\azero_{n+k+1} +\azero_{n+k-1} & = \frac{k(b_{k+1}+b_{k-1}) + b_{k-1}-b_{k+1} - 2k}{k^2-1}  \label{eq:beq1}\\
1-(\azero_{n+k})^2 & = \frac{k^2-1}{k^2} - \frac{b_k^2-2b_k}{k^2}  \label{eq:beq2}
\end{align}
Together these give us that 
\begin{align*}
\big(\azero_{n+k+1}& + \azero_{n+k-1}\big)(1-(\azero_{n+k})^2) - 2\azero_{n+k} \\
&= \frac{(b_{k+1} - 2b_k+ b_{k-1})}{k} - \frac{b_{k-1}-b_{k+1}}{k^2}\\
 &\qquad \quad - \frac{b_k^2-2b_k}{k^2}\left[ \frac{k(b_{k+1}+b_{k-1}) + b_{k-1}-b_{k+1} - 2k}{k^2-1}\right]
\end{align*}
where the final line comes from the product of second term in \eqref{eq:beq2} with the terms in \eqref{eq:beq1}. To complete the proof we just need to rewrite the final $\azero_{n+k}$ in \eqref{eq:F(\azero)} in terms of $b_k$ and substitute in $x_k = \frac{k}{n^{1/3}}$.
\end{proof}

We now need to show that the right hand side of \eqref{eq:F(b)} is a discretization of the differential equation for $f_{\omega_n}$ \eqref{eq:ODEf}.

	\begin{lemma} \label{lem:frombktof2}
		We have
		$$
			\frac{k(b_{k+1}+b_{k-1}-2b_k) -(b_{k+1}-b_{k-1})}{k^2}
			= 
			\frac{x_k^2 f''(x_k)-2 x_k f'(x_k)}{k^3} + \mathcal O(n^{-4/3}),
		$$
		and 
		\[
		b_{k-1}-b_{k+1} = -\frac{2}{n^{1/3}} f'(x_k) + \frac{1}{n} + \mathcal O(n^{-4/3}).
		\]
as $n \to \infty$,  where the constant in the $\mathcal O(n^{-4/3})$ term is uniform in $k = n- \lfloor \delta n^{1/3}\rfloor, \ldots, n-2$ (but depends on  $\delta$).
	\end{lemma}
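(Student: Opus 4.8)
The plan is to prove both identities by Taylor-expanding $f:=f_{\omega_n}$ about $x_k=k/n^{1/3}$ with step $h:=n^{-1/3}$ (so that $x_{k\pm1}=x_k\pm h$), while keeping careful track of the two polynomial correction terms in $b_j=f(x_j)-\frac{\omega_n}{3n^{2/3}}-\frac{j}{2n}$ that enter the definition of $\azero$.

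First I would note that the $j$-independent correction $-\frac{\omega_n}{3n^{2/3}}$ drops out of every first and second difference of the $b_j$'s, while the linear correction $-\frac{j}{2n}$ drops out of the second difference and contributes exactly $+\tfrac1n$ to $b_{k-1}-b_{k+1}$. Thus
\[
b_{k+1}+b_{k-1}-2b_k=f(x_k+h)+f(x_k-h)-2f(x_k),\qquad
b_{k-1}-b_{k+1}=\bigl(f(x_k-h)-f(x_k+h)\bigr)+\tfrac1n ,
\]
and Taylor's theorem with Lagrange remainder gives $f(x_k+h)+f(x_k-h)-2f(x_k)=h^2f''(x_k)+\tfrac{h^4}{12}f^{(4)}(\xi_k)$ and $f(x_k-h)-f(x_k+h)=-2hf'(x_k)-\tfrac{h^3}{3}f'''(x_k)+O(h^5)$. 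The point that makes these remainders uniform in $k$ is that $f_{\omega_n}=x\,\Y_{\omega_n}(x)+1$ is analytic on a fixed complex neighbourhood of the compact interval $[-\delta,0]$ for all large $n$ — indeed $\omega_n\to\omega$ and $\Y_{\omega_n}$ has no real pole on $(-\infty,0)$ — so all derivatives $f^{(j)}_{\omega_n}$ occurring above are bounded uniformly in $n$ and in $k$ on the range $x_k\in[-\delta,0]$.

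For the first identity I would then substitute $k=x_kn^{1/3}$, $k^2=x_k^2n^{2/3}$ to obtain
\[
k\bigl(b_{k+1}+b_{k-1}-2b_k\bigr)-\bigl(b_{k+1}-b_{k-1}\bigr)
=n^{-1/3}\bigl(x_kf''(x_k)-2f'(x_k)\bigr)+E_k ,
\]
where $E_k$ collects the Taylor remainders together with the $\tfrac1n$ coming from the linear correction. Using the expansion $f_{\omega_n}(x)=\tfrac{\omega_n}{3}x^2+\tfrac12 x^3+O(x^4)$ from Lemma \ref{lemma:fexpansion} one checks both that $x_kf''(x_k)-2f'(x_k)=O(x_k)$ and that the cubic term $\tfrac12x^3$ forces $\tfrac{h^3}{3}f'''(x_k)$ to cancel the $\tfrac1n$ up to an $O(x_k/n)$ error, so that $E_k=O(x_k/n)$. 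Dividing by $k^2=x_k^2n^{2/3}$ produces the main term $\dfrac{x_kf''(x_k)-2f'(x_k)}{x_k^2n}=\dfrac{x_k^2f''(x_k)-2x_kf'(x_k)}{k^3}$, plus an error $O\!\bigl(E_k/(x_k^2n^{2/3})\bigr)=O\!\bigl(1/(x_kn^{5/3})\bigr)$; since $|k|\ge2$ forces $1/|x_k|=n^{1/3}/|k|\le n^{1/3}/2$, this error is $O(n^{-4/3})$ uniformly in $k$. For the second identity one reads off the expansion of $b_{k-1}-b_{k+1}$ displayed above and organises its $\tfrac1n$-scale terms, again using $f'''_{\omega_n}(x_k)=3+O(x_k)$ and $|x_k|\le\delta$, into the form $-\frac{2}{n^{1/3}}f'(x_k)+\frac1n+O(n^{-4/3})$ asserted in the lemma.

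I expect the main obstacle to be the uniform-in-$k$ control of the errors across the full range $-\lfloor\delta n^{1/3}\rfloor\le k\le-2$. When $|k|$ is of order $n^{1/3}$ this is just boundedness of the $f^{(j)}_{\omega_n}$ on $[-\delta,0]$; but for small $|k|$ the divisions by $k$, $k^2$ (and by $k^3$ on the right-hand side) are delicate, and one must exploit the vanishing $f_{\omega_n}(0)=f'_{\omega_n}(0)=0$ together with the precise value $f'''_{\omega_n}(0)=3$ from Lemma \ref{lemma:fexpansion}, so that the numerators vanish fast enough as $x_k\to0$ to compensate. Fixing $\delta$ small enough that $f_{\omega_n}$ stays analytic with uniformly bounded derivatives on $[-\delta,0]$ for all large $n$ is what ties the two regimes together, and the $\tfrac1n$-scale cancellation above is, incidentally, what the $-\tfrac1{2n}$ correction in the definition of $\azero$ was designed to produce.
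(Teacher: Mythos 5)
Your argument is correct in substance and follows the same route as the paper: exact cancellation of the constant and linear corrections in the second difference, the surviving $+\tfrac1n$ in the first difference, Taylor expansion with uniform remainders, the use of $f'''(x_k)=3+\mathcal O(x_k)$ to turn the $h^3$-term into $-\tfrac1n+\mathcal O(x_k/n)$, and finally division by $k^2$ with $|k|\ge 2$ to convert $\mathcal O(x_k/n)=\mathcal O(|k|\,n^{-4/3})$ errors into a uniform $\mathcal O(n^{-4/3})$. Your treatment of the first identity is complete and, if anything, more careful than the paper's.

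One wrinkle on the second identity: your own display gives $b_{k-1}-b_{k+1}=-2hf'(x_k)-\tfrac{h^3}{3}f'''(x_k)+\tfrac1n+\mathcal O(h^5)$, and since $\tfrac{h^3}{3}f'''(x_k)=\tfrac1n+\mathcal O(x_k/n)$ the two $\tfrac1n$-scale terms cancel, leaving $-\tfrac{2}{n^{1/3}}f'(x_k)+\mathcal O(x_k/n)$ — not the literal form $-\tfrac{2}{n^{1/3}}f'(x_k)+\tfrac1n+\mathcal O(n^{-4/3})$ with a surviving $+\tfrac1n$, which would require $f'''(x_k)=\mathcal O(n^{-1/3})$. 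So you cannot actually ``organise'' the terms into the form asserted; the statement as printed retains a spurious $\tfrac1n$ (the paper only says the second identity follows ``by the same tools'' and never uses it except after dividing by $k^2$, where the discrepancy is invisible). You should either state the corrected form or note explicitly that the error term there is $\mathcal O(x_k/n)$ rather than $\mathcal O(n^{-4/3})$; the rest of your argument is unaffected.
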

	\begin{proof}
From a simple Taylor expansion we get 
	$$	f(x_{k\pm1})=f(x_k)\pm \frac{f'(x_k)}{n^{1/3}}+\frac{f''(x_k)}{2n^{2/3}}\pm\frac{f'''(x_k)}{6n}+ \mathcal O(n^{-4/3}),$$
	Now note that from Lemma \ref{lemma:yexpansion} we find  that $f'''(x)=3 + \mathcal O(x)$. Therefore,
\begin{multline*}
		k(f(x_{k+1})+f(x_{k-1})-2f(x_k)) -(f(x_{k+1})-f(x_{k-1}))
			\\ = \frac{x_k^2 f''(x_k)-2x_k f'(x_k)}{k}+ \frac{1}{n}+ \mathcal O(\delta n^{-4/3}).
\end{multline*}
And here we see another reason that the choice $b_k = f(x_k)$ isn't good enough. Indeed, we need to cancel the term $\frac{1}{n}$ if we want to obtain an error term of order $\mathcal O(n^{-4/3})$. This cancellation comes from the fact that while the extra terms cancel in the second order difference  for the first order difference we have:
\[
b_{k-1}-b_{k+1} = f(x_{k-1})-f(x_{k+1})+ \frac{1}{n}.
\]
The second equality is proved in using exactly the same tools.
	\end{proof}

\begin{lemma} \label{lem:adjustingbk}
	For $m\geq 1$, we have 
		$$	\frac{k^2}{k^2-1} b_k^m=f(x_k)^m+\mathcal O\left(\frac{k^2}{n^{4/3}}\right)$$
		 where the error term can be bounded uniformly for $-2\ge k\ge -\delta n^{1/3}$ as $n \to \infty$.
	\end{lemma}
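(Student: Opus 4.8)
\textbf{Proof plan for Lemma \ref{lem:adjustingbk}.}

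The plan is to expand both sides around $x_k = k/n^{1/3}$ and track the sizes of the error terms carefully, using the two facts we already have in hand: the relation $b_k = f_{\omega_n}(x_k) - \tfrac{\omega_n}{3n^{2/3}} - \tfrac{k}{2n}$ from the discussion following \eqref{eq:bk}, and the bound $|b_k| \le C_\delta k^2/n^{2/3}$ together with the vanishing estimate $f_\omega(x) = \tfrac{\omega}{3}x^2 + \tfrac{x^3}{2} + \mathcal O(x^4)$ from Lemma \ref{lemma:fexpansion}. The key structural observation is that on Region~II both $b_k$ and $f(x_k)$ are $\mathcal O(k^2/n^{2/3}) = \mathcal O(n^{-1/3})$ (since $|k|\le \delta n^{1/3}$), so every quantity in sight is small, and the correction terms $\tfrac{\omega_n}{3n^{2/3}}$ and $\tfrac{k}{2n}$ are of size $\mathcal O(n^{-2/3})$, hence much smaller.

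First I would write $b_k^m = \bigl(f(x_k) + r_k\bigr)^m$ with $r_k = -\tfrac{\omega_n}{3n^{2/3}} - \tfrac{k}{2n} = \mathcal O(n^{-2/3})$, and expand by the binomial theorem. Every cross term contains at least one factor of $r_k$ and at least $m-1$ factors that are either $f(x_k) = \mathcal O(k^2/n^{2/3})$ or another $r_k$; in particular every such term is bounded by $\mathcal O(n^{-2/3}) \cdot \mathcal O(n^{-(m-1)/3})$. For $m\ge 1$ this is $\mathcal O(n^{-2/3})$, which is certainly $\mathcal O(k^2/n^{4/3})$ only when $k^2 \gtrsim n^{2/3}$; to get the claimed uniform bound for \emph{all} $-\delta n^{1/3}\le k\le -2$ I would instead keep the sharper bookkeeping $f(x_k) = \mathcal O(k^2/n^{2/3})$ and note that the dominant cross term is $m\, f(x_k)^{m-1} r_k = \mathcal O\!\bigl(k^{2(m-1)}/n^{2(m-1)/3}\bigr)\cdot \mathcal O(n^{-2/3})$, which for $m\ge 1$ and $|k|\le\delta n^{1/3}$ is $\mathcal O(k^2/n^{4/3})$ precisely because $k^{2(m-1)} n^{-2(m-1)/3 - 2/3} \le \delta^{2(m-1)} k^2 n^{-4/3}$ once one factor $k^2/n^{2/3}\le\delta^2$ is pulled out and the rest bounded by $\delta^{2(m-2)}\le\delta^{2(m-2)}$ (trivial for $m=1,2$, and a bounded constant otherwise since $\delta$ is fixed). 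Thus $b_k^m = f(x_k)^m + \mathcal O(k^2/n^{4/3})$.

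Second, I would handle the prefactor $\tfrac{k^2}{k^2-1} = 1 + \tfrac{1}{k^2-1}$. Since $|k|\ge 2$ we have $\tfrac{1}{k^2-1}\le\tfrac13$, and multiplying through gives $\tfrac{k^2}{k^2-1}b_k^m = b_k^m + \tfrac{b_k^m}{k^2-1}$. The second piece is $\mathcal O\!\bigl((k^2/n^{2/3})^m / k^2\bigr) = \mathcal O\!\bigl(k^{2m-2} n^{-2m/3}\bigr)$, and for $m\ge1$ with $|k|\le\delta n^{1/3}$ this is again $\mathcal O(k^2/n^{4/3})$ by the same pull-out-one-factor argument (for $m=1$ it is $\mathcal O(n^{-2/3}) = \mathcal O(k^2/n^{4/3})$ only for $|k|\gtrsim n^{-1/6}$, which always holds since $|k|\ge 2$; more simply $k^{0}n^{-2/3}\le k^2 n^{-2/3}/4\le k^2 n^{-4/3}\cdot (n^{2/3}/4)$ — one should just note $m=1$ directly gives $\tfrac{b_k}{k^2-1}=\mathcal O(n^{-2/3})$ which is dominated by the error already incurred). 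Combining the two steps yields $\tfrac{k^2}{k^2-1}b_k^m = f(x_k)^m + \mathcal O(k^2/n^{4/3})$ uniformly on Region~II, as claimed.

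The only real subtlety — the ``hard part'' — is making the uniformity in $k$ genuinely uniform across the full range $2\le|k|\le\delta n^{1/3}$, because the naive $\mathcal O$-bounds degrade at the two ends (small $|k|$, where $1/(k^2-1)$ is largest, versus $|k|\sim\delta n^{1/3}$, where $f(x_k)$ is largest). The resolution is exactly the one used repeatedly in this section: always extract one explicit factor of $k^2/n^{2/3} \le \delta^2$ from the product, bound the remaining (bounded number of) factors each by the fixed constant $\delta^2$ or by $\tfrac13$, and collect everything into a single constant depending only on $\delta$ (and, through $f$, on $\omega$). Since $f_{\omega_n}$ is analytic in a fixed compact neighborhood of $0$ with $\omega_n\to\omega$, all the implied constants for its Taylor coefficients are uniform in $n$, so no issue arises there.
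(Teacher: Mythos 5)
There is a genuine gap, concentrated in the case $m=1$ (and, more generally, in every term of your expansion that carries only one power of $f(x_k)$ or of $b_k$). You treat the shift $r_k = -\tfrac{\omega_n}{3n^{2/3}} - \tfrac{k}{2n}$ and the prefactor correction $\tfrac{b_k^m}{k^2-1}$ as two independent errors, each of size $\mathcal O(n^{-2/3})$. But $n^{-2/3}$ is \emph{not} $\mathcal O(k^2/n^{4/3})$ uniformly on Region II: the inequality $n^{-2/3} \le C k^2 n^{-4/3}$ requires $|k| \gtrsim n^{1/3}$, whereas the range includes $|k|=2$, where $k^2/n^{4/3}$ is smaller than $n^{-2/3}$ by a factor of order $n^{2/3}$. (Your parenthetical ``only for $|k|\gtrsim n^{-1/6}$'' is an arithmetic slip — the correct threshold is $n^{1/3}$ — and the fallback remark that $\tfrac{b_k}{k^2-1}=\mathcal O(n^{-2/3})$ is ``dominated by the error already incurred'' is circular, since the error you are allowed to incur is $\mathcal O(k^2/n^{4/3})$.)

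The missing idea is a cancellation, which is the entire point of including the correction terms in the definition of $a^{(0)}_k$. Since $x_k = k/n^{1/3}$, one has the exact identities $\tfrac{\omega_n}{3n^{2/3}} = \tfrac{\omega_n x_k^2}{3k^2}$ and $\tfrac{k}{2n} = \tfrac{x_k^3}{2k^2}$, so $r_k = -\tfrac{1}{k^2}\bigl(\tfrac{\omega_n}{3}x_k^2 + \tfrac{1}{2}x_k^3\bigr)$, which by Lemma \ref{lemma:fexpansion} is $-\tfrac{1}{k^2}\bigl(f(x_k) + \mathcal O(x_k^4)\bigr)$. Hence $b_k = \tfrac{k^2-1}{k^2}f(x_k) + \mathcal O(x_k^4/k^2) = \tfrac{k^2-1}{k^2}f(x_k) + \mathcal O(k^2/n^{4/3})$: the two $\mathcal O(n^{-2/3})$-sized quantities you kept separate cancel to leading and subleading order, leaving exactly the prefactor $\tfrac{k^2-1}{k^2}$ plus an admissible error. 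This is how the paper proves $m=1$, and the case $m\ge 2$ then follows by raising this to the $m$-th power, where your cruder size bookkeeping ($f(x_k)^{m-1}r_k = \mathcal O(k^2/n^{4/3})$ for $m\ge 2$) does suffice. Without the cancellation your argument only yields $\tfrac{k^2}{k^2-1}b_k = f(x_k) + \mathcal O(n^{-2/3})$, which is too weak for the use of this lemma in Lemma \ref{lem:frombktof3}.
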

	\begin{proof}
		First we deal with the case $m=1$. Then 
		$$ 
		b_k=f(x_k)-\frac{\omega_n}{3 n^{2/3}}-\frac{k}{2n}=f(x_k)-\frac{x^2_k \omega_n}{3k^2}-\frac{x^3_k}{2k^2}=\frac{k^2-1}{k^2} f(x_k)+	\mathcal O\left(\frac{k^{2}}{n^{4/3}}\right),$$
		from which the statmement follows. 
		For $m>1$ we fist note that 
		$$f(x_k)^m=\mathcal O\left(\frac{k^{2m}}{n^{2m/3}}\right),$$
		which implies that  
		$$\left(\frac{k^2-1}{k^2}\right)^{m-1}f(x_k)^m=f(x_k)^m+\mathcal O \left(\frac{k^{2m-2}}{n^{2m/3}}\right)=f(x_k)^m+\mathcal O\left(\frac{k^{2}}{n^{4/3}}\right)$$
	and this proves the statement for $m\geq 2$.
	\end{proof}

\begin{lemma} \label{lem:frombktof3}
	We  have
\begin{align*}
\bigg[\frac{b_{k+1}+ b_{k-1}}{k}+&\frac{b_{k-1}-b_{k+1}}{k^2}-\frac{2}{k}\bigg] \frac{b_k^2-2b_k}{k^2-1}\\
&  = 2\frac{f(x_k)-1}{k^3}\Big(f^2(x_k)-2f(x_k)\Big) + \mathcal{O}(n^{-4/3}),
\end{align*}
as $n \to \infty$,  where the constants in the $\mathcal O(n^{-4/3})$ term is uniform in $k = n- \lfloor \delta n^{1/3}\rfloor, \ldots, n-2$.
\end{lemma}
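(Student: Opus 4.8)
The strategy is to expand the left-hand side into a product of two factors and estimate each factor separately using the preceding lemmas, keeping careful track of the orders of magnitude in $k$ and $n$. The first factor is the bracketed quantity
$$
\frac{b_{k+1}+b_{k-1}}{k}+\frac{b_{k-1}-b_{k+1}}{k^2}-\frac{2}{k},
$$
and the second factor is $\dfrac{b_k^2-2b_k}{k^2-1}$. For the second factor, I would first note that $b_k^2 - 2b_k = (b_k-1)^2 - 1$, and then apply Lemma \ref{lem:adjustingbk} with $m=1$ and $m=2$ to replace $b_k$ and $b_k^2$ by $f(x_k)$ and $f(x_k)^2$ respectively, modulo errors of size $\mathcal O(k^2/n^{4/3})$; since $\dfrac{b_k^2-2b_k}{k^2-1} = \dfrac{k^2}{k^2-1}\cdot\dfrac{b_k^2-2b_k}{k^2}$, this rewrites the second factor as $\dfrac{f(x_k)^2-2f(x_k)}{k^2} + \mathcal O(1/n^{4/3})$.

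For the first (bracketed) factor, I would Taylor expand as in Lemma \ref{lemma:secondDiff} and Lemma \ref{lem:frombktof2}: write $b_{k+1}+b_{k-1}-2b_k$ in terms of $f''$ and $b_{k-1}-b_{k+1}$ in terms of $f'$ (here the $\frac1n$ correction terms from Lemma \ref{lem:frombktof2} come into play, but they contribute only at order $\mathcal O(1/n \cdot 1/k)$, which after multiplication by the second factor $\mathcal O(k^2/n^{2/3})/k^2 = \mathcal O(1/n^{2/3})$ at worst still must be checked against $n^{-4/3}$ — this is where the fine bookkeeping matters). Combining, the bracket becomes
$$
\frac{2b_k - 2}{k} + \frac{x_k^2 f''(x_k) - 2x_k f'(x_k)}{k^3} + (\text{lower order}),
$$
and using the ODE \eqref{eq:ODEf} satisfied by $f = f_{\omega_n}$, namely $x^2 f'' - 2xf' = 2(f-1)(f^2-2f+x^2(x+\omega))$ evaluated at $x=x_k$, the term $x_k^2 f'' - 2x_k f'$ can be traded for $2(f(x_k)-1)(f^2(x_k)-2f(x_k)+x_k^2(x_k+\omega_n))$. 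The dominant piece of the bracket is then $\dfrac{2(f(x_k)-1)}{k}$ (noting $x_k^2/k^3 = 1/(k n^{2/3})$ is subleading), so that multiplying the two factors gives $\dfrac{2(f(x_k)-1)}{k}\cdot\dfrac{f(x_k)^2-2f(x_k)}{k^2} = 2\dfrac{f(x_k)-1}{k^3}\big(f^2(x_k)-2f(x_k)\big)$, which is the claimed main term.

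The main obstacle, and the part requiring genuine care rather than routine computation, is verifying that every cross term and every correction term is genuinely $\mathcal O(n^{-4/3})$ uniformly for $-\delta n^{1/3}\le k\le -2$. The delicate point is that $|b_k|$ and $|f(x_k)|$ are only $\mathcal O(k^2/n^{2/3})$, which is large (up to $\mathcal O(n^{-0})=\mathcal O(1)$ when $|k|\sim n^{1/3}$), while the denominators $k, k^2, k^3$ can be as small as $\mathcal O(1)$; so products such as $\dfrac{1}{k}\cdot\dfrac{b_k^3}{k^2}$ must be checked to lie below $n^{-4/3}$ across the whole range. I would organize this by assigning to each symbol its size ($b_{k}, f(x_k) = \mathcal O(k^2/n^{2/3})$, their differences $\mathcal O(k/n^{1/3})$ or $\mathcal O(1/n)$, second differences $\mathcal O(1/n^{2/3})$) and then tabulating the order of each resulting monomial in the expansion, confirming the worst-case term (the term matched by the ODE substitution) is exactly the stated main term and all others are $\mathcal O(n^{-4/3})$ or smaller. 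This is essentially the same mechanism already used in Lemmas \ref{lem:frombktof2} and \ref{lem:adjustingbk}, so the argument is a careful but conceptually straightforward continuation of those.
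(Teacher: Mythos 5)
Your proposal is correct and follows essentially the same route as the paper: isolate the dominant piece $2(f(x_k)-1)/k$ of the bracket, check that the remaining $f'$, $f''$ and correction terms are killed upon multiplication by the second factor (which is only $\mathcal O(n^{-2/3})$ by Lemma \ref{lemma:fexpansion}), and convert $\frac{b_k^2-2b_k}{k^2-1}$ into $\frac{f^2(x_k)-2f(x_k)}{k^2}$ via Lemma \ref{lem:adjustingbk}. The only superfluous step is the substitution of the ODE \eqref{eq:ODEf} for $x_k^2f''(x_k)-2x_kf'(x_k)$ inside the bracket: that term is already subleading by the size estimate $x_k^2/k^3=1/(kn^{2/3})$ you give, and the ODE is only invoked later, when this lemma is combined with Lemma \ref{lem:frombktof2} to bound $F(\azero)$ on Region II.
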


\begin{proof}
We begin by rewriting the quantity in brackets in terms of $f$:
\begin{align*}
\frac{b_{k+1}+ b_{k-1}}{k}&+\frac{b_{k-1}-b_{k+1}}{k^2}-\frac{2}{k}\\
&  = 2\frac{f(x_k)-1}{k}+ \frac{f''(x_k)}{k n^{2/3}} - \frac{2f'(x_k)}{k^2 n^{1/3}} + \mathcal{O}\left(\frac{1}{kn^{2/3}}\right)
\end{align*}
We will show that all terms but the first won't contribute. We start by recalling that by

To show this we begin by recalling that $f_\omega$ has the expansion given in Lemma \ref{lemma:fexpansion} $|b_k|\le C \frac{k^2}{n^{2/3}}$. This means that 
\begin{equation}
\label{eq:bbound}
\frac{b_k^2-2b_k}{k^2-1} \le \frac{2k^2}{k^2-1} \frac{1}{n^{4/3}}.
\end{equation}

Now consider the terms inside the bracket.
\[
f_\omega'(x) = \frac{2\omega}{3} x + \frac{3}{2}x^2 + \mathcal{O}(x^3)
\]
which gives us the bound 
\[
f_{\omega_n}'(x_k)\le C  \frac{k}{n^{1/3}} 
\]
for $-\delta n^{1/3} \le k \le 2$. This together with \eqref{eq:bbound} gives us that 
\[
 \frac{2f'(x_k)}{k^2 n^{1/3}} \cdot \frac{b_k^2-2b_k}{k^2-1} \le C' \frac{k^2}{k^2-1}\frac{1}{n^{4/3}}.
\]
We can similarly show that the terms $ \frac{f''(x_k)}{k n^{2/3}}$ and $ \frac{1}{kn^{2/3}}$ will contribute errors that can be bounded uniformly by $C n^{-4/3}$ for $k = n- \lfloor \delta n^{1/3}\rfloor, \ldots, n-2$  leaving us with:
\[
\bigg[\frac{b_{k+1}+ b_{k-1}}{k}+\frac{b_{k-1}-b_{k+1}}{k^2}-\frac{2}{k}\bigg] \frac{b_k^2-2b_k}{k^2-1} = 2\frac{f(x_k)-1}{k}\cdot \frac{b_k^2-2b_k}{k^2-1} + \mathcal{O}(n^{-4/3})
\]
The result follows by applying Lemma \ref{lem:adjustingbk}.
\end{proof}

\section{Proof of Theorem \ref{thm:Fbounds}}
\label{sec:Fboundsproof}

In order to prove the bounds in Theorem \ref{thm:Fbounds} we begin with the following breakdowns:
\begin{equation}
\label{eq:split1}
 \left\|\left(F'(a^{(0)})\right)^{-1}F(a^{(0)})\right\|_\infty \le  \left\|\left(F'(a^{(0)})\right)^{-1}\right\|_\infty \left\|F(a^{(0)})\right\|_\infty
\end{equation}
\begin{equation}
\label{eq:split2}
\left\|\left(F'(a^{(0)})\right)^{-1}\left(F'(a))-F'(b)\right)\right\|_\infty \le  \left\|\left(D^{-1}F'(a^{(0)})\right)^{-1}\right\|_\infty \left\|D^{-1}\left(F'(a))-F'(b)\right)\right\|_\infty 
\end{equation}
where $D$ is a diagonal matrix with $\azero$ on the diagonal. Our computation is therefore broken down into bounding the four terms in the upper bounds. Proposition  \ref{prop:Fbounds1} from the previous section gives us the necessary bounds on $\|F(\azero)\|_\infty$. The remaining three bounds are proved below.

\begin{proposition}
\label{prop:Fbounds4}
There exists $C>0$ uniform in $k$ and $n$ such that 
\[
\| D^{-1}(F'(a)-F'(b))_{jk} \|_\infty \le C \|a-b\|_\infty.
\]
\end{proposition}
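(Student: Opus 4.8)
The plan is to compute $F'$ explicitly, observe that it is tridiagonal, bound $F'(a)-F'(b)$ entrywise in a Lipschitz fashion, and then absorb the division by $D$ using a uniform lower bound on the entries of $\azero$ together with a bound on the ratio of consecutive entries. Throughout, $\|\cdot\|_\infty$ on matrices is the operator norm induced by $\ell^\infty$, i.e.\ the maximal absolute row sum.

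First I would record the Jacobian. From $\left(F(a)\right)_k=(1-a_k^2)(a_{k+1}+a_{k-1})-2\frac{k+\shift}{t}a_k$, with $a_{-1}=a_{n-1}=1$ held fixed, $F'(a)$ is tridiagonal with
\[
\left(F'(a)\right)_{kk}=-2a_k(a_{k+1}+a_{k-1})-2\tfrac{k+\shift}{t},\qquad \left(F'(a)\right)_{k,k\pm1}=1-a_k^2,
\]
the off-diagonal entry being absent when the neighbouring index equals $-1$ or $n-1$ (where the value is the fixed boundary value). Subtracting, the term $2\frac{k+\shift}{t}$ drops out, so $\left(F'(a)-F'(b)\right)_{k,k\pm1}=(b_k-a_k)(b_k+a_k)$, while the telescoping identity
\[
a_k(a_{k+1}+a_{k-1})-b_k(b_{k+1}+b_{k-1})=(a_k-b_k)(a_{k+1}+a_{k-1})+b_k\big((a_{k+1}-b_{k+1})+(a_{k-1}-b_{k-1})\big)
\]
yields $|\left(F'(a)-F'(b)\right)_{kk}|\le 2\|a-b\|_\infty\big(|a_{k+1}|+|a_{k-1}|+2|b_k|\big)$ and $|\left(F'(a)-F'(b)\right)_{k,k\pm1}|\le\|a-b\|_\infty(|a_k|+|b_k|)$. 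Consequently the absolute sum of row $j$ of $F'(a)-F'(b)$ is at most $\|a-b\|_\infty\big(2|a_{j+1}|+2|a_{j-1}|+2|a_j|+6|b_j|\big)$.

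The $j$-th row sum of $D^{-1}(F'(a)-F'(b))$ is the previous quantity divided by $\azero_j$. Since $a,b\in B(\azero,cn^{-2/3})$ we have $|a_m|,|b_m|\le\azero_m+cn^{-2/3}$, so the proposition reduces to two \emph{a priori} facts about $\azero$, uniform in $j$ and in $n$ for $n$ large: (a) $\azero_{j\pm1}\le C_1\,\azero_j$, and (b) $\azero_j\ge c_0\,n^{-1/3}$. Granting these, the $j$-th row sum is bounded by $\|a-b\|_\infty\big(C_2\azero_j+C_3 n^{-2/3}\big)/\azero_j\le\|a-b\|_\infty\big(C_2+C_3 c_0^{-1}n^{-1/3}\big)\le C\|a-b\|_\infty$ for all $n$ large, and taking the maximum over $j$ finishes the proof.

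The real work, and the main obstacle, is (a) and (b). I would split along the two regions of Section \ref{sec:regionI}. On Region~I, $\azero_k=\frac1{n^{1/3}}\Y_{\omega_n}(x_k)+O(n^{-1})$ with $x_k=(k-n)/n^{1/3}\le-\delta$, so (b) follows from $\Y_{\omega_n}>0$ on $(-\infty,0)$ together with the lower bound $\y_b(x)\ge\sqrt{-x/3}$ of Theorem \ref{thm:inequ} (which in fact gives $\azero_k\gtrsim\sqrt{(n-k)/n}\ge\sqrt\delta\,n^{-1/3}$), while (a) holds because this leading term is a slowly varying discretization of a smooth positive function (its relative increments are $O\big((n-k)^{-1}n^{-1/3}\big)$ once the $\sqrt{\,\cdot\,}$ behavior is accounted for, uniformly for $\delta$ fixed). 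On Region~II, Lemma \ref{lemma:yexpansion} gives $\azero_{n+k}=(1-b_k)/|k|$ with $|b_k|\le C_\delta\delta^2<\tfrac12$ once $\delta$ is fixed small enough, so $\azero_{n+k}\in[\tfrac1{2|k|},\tfrac3{2|k|}]$ with $2\le|k|\le\delta n^{1/3}$; this delivers both (b) (then $\azero_{n+k}\ge\tfrac1{2\delta}n^{-1/3}$) and (a) (consecutive ratios lie in a fixed compact subset of $(0,\infty)$). The two regions are compatible at the junction since both sides give $\azero\asymp n^{-1/3}$ there (indeed $\Y_{\omega_n}(-\delta)=\tfrac1\delta+O(\delta)$ by Lemma \ref{lemma:yexpansion}), and the endpoints $j=0,n-2$ have $\azero_j$ bounded below by an absolute constant. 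Since these same estimates on $\azero$ are needed to control $\|(D^{-1}F'(\azero))^{-1}\|_\infty$ in the companion bound, I would isolate (a) and (b) as a preliminary lemma and reuse it.
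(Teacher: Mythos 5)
Your proof is correct and follows essentially the same route as the paper's: compute the tridiagonal difference $F'(a)-F'(b)$, telescope the diagonal entry, and absorb the division by $D$ using the comparability of $a,b$ with $\azero$. The only difference is one of completeness — the paper compresses the division step into the one-line assertion $a_j/\azero_j=1+\mathcal O(n^{-2/3})$, whereas you isolate and actually verify the two facts this hides (that $\azero_{j\pm1}\lesssim\azero_j$ and that $\azero_j\gtrsim n^{-1/3}$, via Theorem \ref{thm:inequ} on Region I and Lemma \ref{lemma:yexpansion} on Region II), which makes your write-up the more rigorous of the two.
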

It can in fact be checked that $C\approx 4$.

\begin{proof}
We begin by computing $F'(a)-F'(b)$
\begin{align*}
(F'(a)-F'(b))_{jk} = \begin{cases}
(a_j - b_j)(a_j+b_j) & k = j\pm 1\\
2a_j(a_{j+1}+a_{j-1}) - 2b_j(b_{j+1}+b_{j-1}) & k = j\\
0 & \text{ otherwise}
\end{cases}
\end{align*}
Using that $a,b \in B(\azero, cn^{-2/3})$ we get that
\[
\frac{a_j}{\azero_j} = 1+ \mathcal{O}(n^{-2/3}) = \frac{b_j}{\azero_j}.
\]
This means that 
\[
D^{-1}(F'(a)-F'(b))_{jk} = \begin{cases}
2(a_j - b_j)(1 + \mathcal{O}(n^{-2/3}))  & k = j\pm 1\\
2\Big(( a_{j+1}- b_{j+1})+ (a_{j-1}-b_{j-1})\Big)\left(1  + \mathcal{O}(n^{-2/3})\right) & k = j\\
0 & \text{ otherwise}
\end{cases},
\]
which gives
\[
\| D^{-1}(F'(a)-F'(b))_{jk} \|_\infty \le C \|a-b\|_\infty 
\]

\end{proof}

We now consider the terms in \eqref{eq:split1} and \eqref{eq:split2} that involve a matrix inverse:
\begin{equation}
\label{eq:matrixinversenorms}
\left\|\left(F'(\azero)\right)^{-1}\right\|_\infty \quad \text{and} \quad \left\|\left(D^{-1}F'(a^{(0)})\right)^{-1}\right\|_\infty.
\end{equation}
We find that we need  the following tool:

\begin{theorem}[Varah, \cite{Varah}]
\label{thm:inversebound}
Assume $M$ is diagonally dominant by rows (i.e. $|m_{kk}| > \sum_{j\ne k} |m_{kj}|$ for all $k$) and set 
\[\gamma = \min_k \left(|m_{kk}| - \sum_{j\ne k} |m_{kj}|\right).\]
Then $\|M^{-1}\|_\infty < 1/\gamma.$
\end{theorem}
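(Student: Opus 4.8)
The plan is to reduce the operator-norm bound to a statement about the components of a single linear system, which is the standard route to Varah's inequality. Throughout, $\|\cdot\|_\infty$ denotes the max-norm on vectors and the induced (maximum absolute row sum) operator norm on matrices, so it is enough to show that whenever $Mx = y$ one has $\gamma\,\|x\|_\infty \le \|y\|_\infty$; taking the supremum over $y$ with $\|y\|_\infty = 1$ then gives $\|M^{-1}\|_\infty \le 1/\gamma$. Before that I would record that $M$ is invertible at all: this is the L\'evy--Desplanques theorem, and it follows from the same argument applied to $y=0$ (if $Mx=0$ with $x\ne 0$, choosing an index $k$ with $|x_k|=\|x\|_\infty>0$ and looking at the $k$-th equation contradicts strict row-dominance), so that the identity $\|M^{-1}\|_\infty = \sup_{\|y\|_\infty=1}\|M^{-1}y\|_\infty$ is meaningful.

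The heart of the matter is an elementary estimate. Given $Mx=y$, pick an index $k$ attaining $|x_k| = \|x\|_\infty$. Isolating the diagonal term in the $k$-th equation gives $m_{kk}x_k = y_k - \sum_{j\ne k} m_{kj}x_j$, so that by the triangle inequality and $|x_j|\le\|x\|_\infty$ for every $j$,
\[
|m_{kk}|\,\|x\|_\infty \;\le\; |y_k| + \Big(\sum_{j\ne k}|m_{kj}|\Big)\|x\|_\infty \;\le\; \|y\|_\infty + \Big(\sum_{j\ne k}|m_{kj}|\Big)\|x\|_\infty .
\]
Rearranging and using $|m_{kk}|-\sum_{j\ne k}|m_{kj}|\ge\gamma$ yields $\gamma\,\|x\|_\infty\le\|y\|_\infty$, which is exactly the bound sought.

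Combining the two pieces: since $M$ is invertible, every $y$ equals $Mx$ with $x=M^{-1}y$, so the estimate above gives $\|M^{-1}y\|_\infty\le\gamma^{-1}\|y\|_\infty$ for all $y$, hence $\|M^{-1}\|_\infty\le 1/\gamma$ --- which is the form we invoke for the inverses in \eqref{eq:matrixinversenorms}. (The strict inequality as literally stated can be recovered by inspecting the equality case in the chain above; since only $\le 1/\gamma$ is used below I would be content to prove that.) I do not expect a genuine obstacle here --- this is a soft, classical argument. The only points deserving care are being explicit that the relevant matrix norm is the maximum absolute row sum, so that passing to the single coordinate $k$ is legitimate, and keeping the invertibility of $M$ logically separate from the norm estimate; both are dispatched by the one max-component trick.
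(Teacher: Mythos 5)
Your argument is correct, and it is the standard (and essentially the only) proof of this bound: the paper itself gives no proof, since the result is simply cited from Varah's note, so there is nothing to compare against beyond confirming that your max-component argument is the classical one. Two small remarks. First, you are right to be cautious about the strict inequality: as literally stated the theorem is slightly too strong, since equality $\|M^{-1}\|_\infty = 1/\gamma$ can occur (take $M$ to be the identity, for which $\gamma=1$); the correct and standard form of Varah's bound is $\|M^{-1}\|_\infty \le 1/\gamma$, and that weaker form is all that is used in Propositions \ref{prop:Fbounds2} and \ref{prop:Fbounds3}. Second, your separation of the L\'evy--Desplanques invertibility statement from the norm estimate is clean, though one can also fold them together by observing that the inequality $\gamma\,\|x\|_\infty\le\|Mx\|_\infty$ for all $x$ already shows $M$ is injective, hence invertible in finite dimensions, after which the same inequality applied to $x=M^{-1}y$ gives the norm bound.
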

From this we can see that if we define 
\[
\gamma_k =  |m_{kk}| - \sum_{j\ne k} |m_{kj}|,
\]
where $M$ is the appropriate matrix, then giving lower bounds for $\gamma_k$ will be enough to get upper bounds on quantities in \eqref{eq:matrixinversenorms}.

\begin{proposition}
\label{prop:Fbounds2}
There exists $C>0,$ uniform in $k$ and $n$ such that $\gamma_k \ge \frac{1}{Cn^{2/3}}$, and by extension
\[
\left\|\left(F'(\azero)\right)^{-1}\right\|_\infty \le Cn^{2/3}.
\]
\end{proposition}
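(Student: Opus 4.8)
The goal is to produce a lower bound $\gamma_k \ge 1/(Cn^{2/3})$ for the row sums of $M = F'(a^{(0)})$, uniformly in $k$ and $n$, where $\gamma_k = |m_{kk}| - \sum_{j\ne k}|m_{kj}|$. Since $F$ is defined by $(F(a))_k = (1-a_k^2)(a_{k+1}+a_{k-1}) - 2\frac{k+\shift}{t}a_k$, the Jacobian is tridiagonal with
\[
m_{kk} = -2a_k(a_{k+1}+a_{k-1}) - 2\tfrac{k+\shift}{t}, \qquad m_{k,k\pm1} = 1 - a_k^2,
\]
so that $\gamma_k = 2a_k(a_{k+1}+a_{k-1}) + 2\frac{k+\shift}{t} - 2(1-a_k^2)$ (using that $a_k \in (0,1)$ makes all the relevant quantities positive). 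The plan is to evaluate this at $a = a^{(0)}$ and show it is bounded below by a positive multiple of $n^{-2/3}$ in each of Region~I and Region~II, then invoke Varah's theorem (Theorem~\ref{thm:inversebound}) to conclude $\|(F'(a^{(0)}))^{-1}\|_\infty \le 1/\min_k\gamma_k \le Cn^{2/3}$.

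\textbf{Region~I.} Here $a^{(0)}_k = n^{-1/3}\Y_{\omega_n}(x_k) + O(n^{-2/3})$ with $x_k = (k-n)/n^{1/3} \le -\delta$. Substituting and using $a_{k+1}^{(0)}+a_{k-1}^{(0)} = 2a_k^{(0)} + O(n^{-1})$ together with $\frac{k+\shift}{t} = 1 - (n-k-\omega_n n^{1/3})/n + O(n^{-4/3})$, one finds after expansion that
\[
\gamma_k = \frac{2}{n^{1/3}}\Y_{\omega_n}(x_k)^2 + \frac{2}{n^{2/3}}\bigl(n^{1/3} - \text{(linear in }x_k)\bigr) - \frac{2}{n^{1/3}}\Bigl(\text{correction}\Bigr) + \cdots,
\]
and the dominant contribution is $2n^{-2/3}(n-k) \cdot n^{-1/3} \cdot(\text{something})$ — more precisely, writing everything in terms of $x_k$, the leading behavior is $\gamma_k \sim \frac{2}{n^{2/3}}\bigl(|x_k| - \text{stuff}\bigr)$ for $|x_k|$ large, and for $|x_k|$ of order one it is controlled by $2n^{-1/3}\Y_{\omega_n}(x_k)^2 \ge 2n^{-1/3}(-x_k/3)$ by Theorem~\ref{thm:inequ}. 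The key inputs are that $\Y_\omega(x)^2 \ge -x/3$ (giving a bound that grows like $|x_k|$, hence like $(n-k)/n^{1/3}$, so $\gamma_k \gtrsim (n-k)/n \ge \delta/n^{2/3}$ once $n-k \ge \delta n^{1/3}$), and that $\frac{k+\shift}{t} - (1-a_k^2)$ is bounded below appropriately. So in Region~I we get $\gamma_k \ge c n^{-2/3}$ with room to spare, and this is where the lower bound $\Y_\omega(x) \ge \sqrt{-x/3}$ from Theorem~\ref{thm:inequ} is essential — exactly as the introduction flagged.

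\textbf{Region~II and the main obstacle.} Near $k = n$ the approximate solution $a^{(0)}$ descends toward $0$ following the rescaled Painlevé pole, so $a_k^{(0)} = (b_k - 1)/k$ with $b_k = O(k^2 n^{-2/3})$, hence $a_k^{(0)} \approx -1/k$ is of order $n^{-1/3}/\delta$ when $n - k \sim \delta n^{1/3}$, but becomes of order $1$ (close to $1$) when $n-k$ is of order $1$. Here $1 - a_k^2$ is no longer close to $1$; instead $1 - (a_k^{(0)})^2 = (k^2-1)/k^2 - (b_k^2-2b_k)/k^2$, and the subtraction $\frac{k+\shift}{t} - (1-a_k^2)$ needs to be handled carefully since both terms are close to $1$. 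Rewriting $\gamma_k$ in terms of $b_k$ (using the identities from Lemma~\ref{lem:frombktof2} and the relation $a^{(0)}_{n+k\pm1} = (k(b_{k\pm1}) \mp(\cdots))/(k^2-1)$) and cancelling the leading $1$'s, I expect $\gamma_k$ to come out as (approximately) $\frac{2}{n^{2/3}}(\text{positive})$ plus the genuinely positive discretization of the quantity $-(x_k + \omega_n)$ or of $f_{\omega_n}$'s defining data near its double zero — the point being that $\Y_{\omega_n}(x) > 0$ strictly on $(-\infty, 0)$ and stays bounded away from zero once we back off from the pole (the $\delta$ in $y_\omega(x) > \delta$ invoked in an earlier proposition), while near the pole the $-2(k+\shift)/t \cdot a_k$ term and the $2a_k(a_{k+1}+a_{k-1})$ term each contribute pole-like positive pieces that do not cancel. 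The hard part is precisely this region: one must track the competition between $\frac{k+\shift}{t} \approx 1$ and $1-a_k^2 \approx 1$ at the level of the $O(1/k) = O(n^{-1/3})$ and $O(1/k^2)$ corrections and verify the residual is positive and at least of order $n^{-2/3}$ — and to do so one uses the explicit expansions of $\Y_\omega$ and $f_\omega$ near $0$ (Lemmas~\ref{lemma:yexpansion} and \ref{lemma:fexpansion}) together with the strict positivity/monotonicity of $\Y_\omega$ away from the pole. Once both regions give $\gamma_k \ge c n^{-2/3}$, taking $C = 1/c$ and applying Varah's theorem finishes the proof.
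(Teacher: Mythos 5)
Your setup is right: the tridiagonal form of $F'$, the expression $\gamma_k = 2\azero_k(\azero_{k+1}+\azero_k+\azero_{k-1}) + 2\bigl(\tfrac{k+\shift}{t}-1\bigr)$, the appeal to Varah's theorem, and the identification of $\Y_\omega^2(x)\ge -(x+\omega)/3$ as the essential input all match the paper. But the proof has two genuine gaps. First, in the bulk regime your chain of inequalities does not close. After Taylor expansion one has $\gamma_k \ge \tfrac{6}{n^{2/3}}\Y^2(x_k) + \tfrac{2}{n^{2/3}}(x_k+\omega_n) + \mathcal O(n^{-1})$, and the lower bound $6\Y^2(x_k) \ge -2(x_k+\omega)$ is cancelled \emph{almost exactly} by the second term: substituting it gives only $\gamma_k \ge 2(\omega_n-\omega)/n^{2/3} + \mathcal O(n^{-1}) = \mathcal O(n^{-1})$, which is not enough. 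Your claim that the bound ``grows like $|x_k|$, hence $\gamma_k\gtrsim (n-k)/n$ with room to spare'' ignores the competing negative term $2(x_k+\omega_n)/n^{2/3}$ of the same order. The paper closes this by upgrading to a strict inequality with a uniform margin: $\Y_\omega^2(x) > (1+\delta)\bigl(-(x+\omega)/3\bigr)$ on $(-\infty,-\omega]$, which holds because the ratio tends to $3$ as $x\to-\infty$ (the first, stronger bound in Theorem \ref{thm:inequ}) and exceeds $1$ strictly on compacts. That extra factor $(1+\delta)$ is what produces the surviving $\delta\cdot 6 n^{-2/3}$. (There is also a power slip in your Region I display: $2\azero_k(\cdots)$ is of order $n^{-2/3}\Y^2$, not $n^{-1/3}\Y^2$.)

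Second, near the pole you do not give a proof at all — the passage is a list of things ``one must track'' and what you ``expect.'' In fact this region is easier than you anticipate, not harder: as $x_k\to 0^-$ the function $\Y_{\omega_n}$ blows up like $-1/x_k$, so $\azero_k$ is bounded below by a constant times $n^{-1/3}$ (indeed $\azero_k$ approaches order one), and the single term $2\azero_k(\azero_{k+1}+\azero_k+\azero_{k-1})\approx 6(\azero_k)^2$ already dominates everything else by a margin far exceeding $n^{-2/3}$; no delicate cancellation analysis with the expansions of $f_\omega$ is needed. What does require care, and what you do not address, is the sign of the correction term $-\tfrac{\omega_n}{3(k-n)n^{2/3}}$ in $\azero_k$: for $\omega\ge 0$ it is a positive contribution and can be discarded, but for $\omega<0$ it is negative and of order $n^{-2/3}$ when $k$ is within $\mathcal O(n^{1/3})$ of $n$, which is why the paper splits into cases on $\mathrm{sgn}(\omega)$ and, in the negative case, treats the window $k-n>\delta_\omega n^{1/3}$ separately using the blow-up of $\Y_\omega$.
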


\begin{proof}
We compute:
\[
\left(F'(a)\right)_{kj} = \begin{cases}
1- a_k^2 &  j = k\pm1\\
-2a_k(a_{k+1} + a_{k-1}) - 2 \frac{k+\shift}{t} & j = k\\
0 & \text{ otherwise}
\end{cases}
\]
Taking $M= F'(\azero)$ we define $\gamma_k = |m_{kk}| - \sum_{j\ne k} |m_{kj}|$. Then we use that $0<\azero_k <1$ to get that
\begin{equation}
\label{eq:alphak}
\gamma_k = 2\azero_k(\azero_{k+1} + \azero_{k-1}) + 2 \frac{k+\shift}{t} - 2(1-(\azero_k)^2).
\end{equation}
Using the shorthand $x_k = \frac{k-n}{n^{1/3}}$ this can be rewritten as 
\[
\gamma_k = 2\azero_k(\azero_{k+1} + \azero_k + \azero_{k-1}) + 2 \frac{x_k+\omega_n}{n^{2/3}} + \mathcal{O}\left(\frac{1}{n}\right).
\]
Recall that we had 
\[
\azero_k = \frac{1}{n^{1/3}} \Y_{\omega_n} \left(x_k\right) - \frac{\omega_n}{3(k-n) n^{2/3}} - \frac{1}{2n},
\]
and notice that in the case where $\omega_n \ge 0$ that second term will make a positive contribution (since $k-n<0$). Because of this it is convenient to break into cases where case 1 is $\omega \ge 0$, and case 2 is $\omega<0$. Notice that for $\shift\ge 0$ we always have $\omega_n \ge \omega$.

\bigskip
\noindent \textit{Case 1: $\omega \ge 0$}

In this case we have $\azero_k  \ge \frac{1}{n^{1/3}}\Y_\omega(x_k) - \frac{1}{2n}$ which gives us that 
\[
\gamma_k  \ge \frac{2}{n^{2/3}}\Y(x_k)\big(\Y(x_{k+1}) + \Y(x_k) + \Y(x_{k-1})\big) + 2 \frac{x_k+\omega_n}{n^{2/3}} + \mathcal{O}\left(\frac{1}{n}\right).
\]
Applying a Taylor expansion to $\Y(x_{k\pm 1})$ gives 
\[
\gamma_k  \ge \frac{6}{n^{2/3}}\Y^2(x_k)+ 2 \frac{x_k+\omega_n}{n^{2/3}} + \frac{2\Y(x_k)}{n^{4/3}}\Y''(\xi) + \mathcal{O}\left(\frac{1}{n}\right).
\]
where $|\xi -x_k|<\frac{1}{n^{1/3}}$. We know from the boundary conditions of $\Y$  that 
\[
\lim_{x \to -\infty} \Y(x)\Y''(x) = 0 \quad \text{ and } \quad \lim_{x \to 0} \Y(x)\Y''(x) =+ \infty.\]
By continuity, this means that the error from the Taylor approximation can be bounded as 
$$\Y(x)\Y''(\xi) \geq -cn^{-4/3}$$ for some value of $c>0$ that is uniform in $k$ and $n$. This gives us that 
\[
\gamma_k  \ge \frac{6}{n^{2/3}}\Y^2(x_k)+ 2 \frac{x_k+\omega_n}{n^{2/3}} + \mathcal{O}\left(\frac{1}{n}\right).
\]
Now notice that if $x>-\omega_n$  both main terms are strictly positive and $\Y$ is bounded away from $0$, so this can be extended to $x\ge \omega_n$.
For $x< \omega_n$ we use Theorem \ref{thm:ubounds} which says that
\[
\Y_\omega (x)= \y_{b(\omega)}(x+\omega)> \sqrt{-(x+\omega)/3}, 
\]
for $x \in (-\infty, -\omega]$. 
Moreover we know from the other lower bound  Theorem \ref{thm:ubounds}, that the statement can in fact be made much stronger for $x$ large and negative. Therefore it suffices to use this inequality on compact intervals, meaning we have that there exists $\delta>0$ such that $\Y_\omega^2 (x) > -(1+\delta)(x+\omega)/3$, which gives us
\[
\gamma_k  > \delta\frac{6}{n^{2/3}} +  \mathcal{O}\left(\frac{1}{n}\right).
\]
Taking $n$ sufficiently large gives us that $\gamma_k$ is bounded below by $cn^{-2/3}$ for some $c>0$ as desired.

\bigskip
\noindent \textit{Case 2: $\omega < 0$}

For this case we can't simply drop the term $\frac{\omega_n}{3(k-n) n^{2/3}}$ from $\azero_k$ because it will make a negative contribution on the order of $n^{-2/3}$ when $k$ is very close to $n$. Instead we split into two regions:

Let $\delta_\omega<0$ such that for all $0> x> \delta_\omega$ we have that
\[
\Y_{\omega_n}(x) \ge \Y_\omega(x) > (1-\omega).
\]
We must have that such an $\delta_\omega$ exists since $\Y_\omega$ has pole at $x=0$. For $k-n> \delta_\omega n^{1/3}$ we then have that 
\[
\azero_k  \ge \frac{\Y(x_k)}{n^{1/3}} + \frac{\omega}{3n^{2/3}}- \frac{1}{2n} > \frac{1}{n^{1/3}} - \omega\left(\frac{1}{n^{1/3}} - \frac{1}{n^{2/3}}\right) - \frac{1}{2n} >  \frac{1}{n^{1/3}} - \frac{1}{2n}.
\]
This is enough to give us that 
\[
\gamma_k > 3\left(\frac{1}{n^{1/3}} - \frac{1}{2n}\right)^3 + \frac{x_k + \omega_k}{n^{2/3}}.
\]
Taking $n$ large enough clearly gives us that this can be bounded below by $cn^{-2/3}$ for some positive $c$. For $k-n\le \delta_\omega n^{1/3}$  we will have that  
\[
\frac{\omega_n}{3(k-n) n^{2/3}} = \mathcal{O}\left(\frac{1}{n}\right),
\]
and so this term may be omitted leaving us with
\[
\gamma_k  \ge \frac{2}{n^{2/3}}\Y(x_k)\big(\Y(x_{k+1}) + \Y(x_k) + \Y(x_{k-1})\big) + 2 \frac{x_k+\omega_n}{n^{2/3}} + \mathcal{O}\left(\frac{1}{n}\right),
\]
as before. This may be treated the same way as in Case 1.
\end{proof}

We bound $\left(D^{-1}F'(\azero )\right)^{-1}$ where $D$ is the diagonal matrix with $\azero$ on the diagonal in a similar way.
\begin{proposition}
\label{prop:Fbounds3}
Let $\gamma_k^D = |m_{kk}| - \sum_{j\ne k} |m_{kj}|$ with $M = D^{-1}F'(\azero)$. Then there exists $c>0$, uniform in $k$ and $n$ such that 
\[
\gamma_k^D \ge c n^{-1/3}.
\]
Furthermore, by Varah's Theorem \ref{thm:inversebound}, there exists $C>0,$ uniform in $k$ and $n$ such that 
\[
\left\|\left(D^{-1}F'(\azero)\right)^{-1}\right\|_\infty \le Cn^{1/3}. 
\]
\end{proposition}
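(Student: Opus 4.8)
The plan is to run the argument of Proposition \ref{prop:Fbounds2}, carrying along the extra factor $D^{-1}$. Since $0<\azero_k<1$, the matrix $F'(\azero)$ has in its $k$-th row the negative diagonal entry $-2\azero_k(\azero_{k+1}+\azero_{k-1})-2\tfrac{k+\shift}{t}$ and the positive entries $1-\azero_k^2$ in positions $k\pm1$. Multiplying row $k$ by $1/\azero_k>0$ does not change this sign pattern, so the row-dominance gap of $M=D^{-1}F'(\azero)$ is exactly $\gamma_k^D=\gamma_k/\azero_k$, with $\gamma_k$ the quantity estimated in Proposition \ref{prop:Fbounds2}. Writing, as there, $\gamma_k=2\azero_k(\azero_{k+1}+\azero_k+\azero_{k-1})+2\big(\tfrac{k+\shift}{t}-1\big)$ and dividing by $\azero_k$,
\[
\gamma_k^D=2(\azero_{k+1}+\azero_k+\azero_{k-1})+\frac{2}{\azero_k}\Big(\frac{k+\shift}{t}-1\Big).
\]

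Next I would insert the asymptotics already used for Proposition \ref{prop:Fbounds2}: $\azero_k=\tfrac{1}{n^{1/3}}\Y_{\omega_n}(x_k)(1+o(1))$ with $x_k=\tfrac{k-n}{n^{1/3}}$, a Taylor expansion giving $\azero_{k+1}+\azero_k+\azero_{k-1}=\tfrac{3}{n^{1/3}}\Y_{\omega_n}(x_k)+O(n^{-1})$, and $\tfrac{k+\shift}{t}-1=\tfrac{x_k+\omega_n}{n^{2/3}}(1+o(1))$; the correction terms in \eqref{eq:azero} enter only at lower order, exactly as in Section \ref{sec:regionI}. This gives, uniformly in $k$ in Region I,
\[
\gamma_k^D=\frac{2}{n^{1/3}\,\Y_{\omega_n}(x_k)}\Big(3\Y_{\omega_n}^2(x_k)+x_k+\omega_n\Big)+o\big(n^{-1/3}\big).
\]
In Region II, where $\Y_{\omega_n}$ is close to its pole at $0$, running the same computation in the variables $b_k=k\azero_{n+k}+1$ and $f_{\omega_n}$ of Subsection \ref{subsec:regionII} gives $\azero_{n+k}\approx 1/|k|$ with $2\le|k|\le \delta n^{1/3}$, so $\azero_{n+k}\ge c\,n^{-1/3}$ there and the term $2(\azero_{k+1}+\azero_k+\azero_{k-1})$ alone dominates, the contribution of $\tfrac{2}{\azero_k}\big(\tfrac{k+\shift}{t}-1\big)$ being $O(\delta n^{-1/3})$ and hence negligible for $\delta$ small.

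The heart of the matter is the uniform lower bound
\[
\inf_{x<0}\ \frac{3\Y_{\omega_n}^2(x)+x+\omega_n}{\Y_{\omega_n}(x)}\ \ge\ c\ >\ 0
\]
valid for all large $n$. Setting $y=x+\omega_n$ and recalling $\Y_{\omega_n}(x)=\y_{b(\omega_n)}(y)$ with $\omega(b(\omega_n))=\omega_n$, Theorem \ref{thm:inequ} yields $\y_{b(\omega_n)}(y)>\sqrt{-y/3}$ for $y\le\min(\omega_n,0)$, so $3\Y_{\omega_n}^2(x)+x+\omega_n=3\y_{b(\omega_n)}^2(y)+y>0$ there, while for $0<y<\omega_n$ (which can occur only if $\omega_n>0$) positivity follows from $\y_{b(\omega_n)}(y)>0$. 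The sharper bound $\y_b(y)>\sqrt{-y}\,\sqrt{(\sqrt{1-2y^3}-1)/(\sqrt{1-2y^3}+1)}$ of Theorem \ref{thm:inequ} together with the asymptotics \eqref{eq:familybatinfinityu} shows the ratio tends to $+\infty$ as $y\to-\infty$; near the pole $\Y_{\omega_n}(x)\to+\infty$ and the ratio behaves like $3\Y_{\omega_n}(x)\to+\infty$; and on the remaining compact range of $y$ the numerator is continuous and strictly positive while the denominator is continuous and bounded, so the ratio has a positive minimum. Monotonicity of $b\mapsto\y_b$ and of $\omega\mapsto\omega(b)$ from Theorem \ref{thm:ubounds} makes these bounds uniform for $\omega_n$ in a neighborhood of $\omega$, i.e. for all large $n$. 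Combining the two regions, $\gamma_k^D\ge c'n^{-1/3}$ uniformly in $k$, and Varah's Theorem \ref{thm:inversebound} then gives $\big\|(D^{-1}F'(\azero))^{-1}\big\|_\infty<1/\min_k\gamma_k^D\le Cn^{1/3}$.

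The step I expect to be the main obstacle is not any single estimate but making all of this genuinely uniform in $k$ across the transition from Region I (where $\azero_k$ is of order one), through the Painlev\'e regime (where $\azero_k$ passes through scale $n^{-1/3}$), into Region II (where it climbs back toward the pole): one must check that the $o(1)$ errors hidden in the replacement of $\azero_k$ by $n^{-1/3}\Y_{\omega_n}(x_k)$ and in the Taylor remainders do not accumulate, which is precisely why the proof is split into Regions I and II and why the lower bound $\y_b>\sqrt{-x/3}$ of Theorem \ref{thm:inequ} (rather than mere positivity of $\y_b$) is essential.
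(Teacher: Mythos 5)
Your proposal is correct and follows essentially the same route as the paper: both reduce to the identity $\gamma_k^D=\gamma_k/\azero_k$ and then invoke the lower bound $\y_b>\sqrt{-x/3}$ from Theorem \ref{thm:inequ} together with a compactness/monotonicity argument for uniformity in $n$. The only organizational difference is that the paper first dispatches all $k$ with $\azero_k\le Mn^{-1/3}$ in one line by dividing the bound $\gamma_k\ge cn^{-2/3}$ of Proposition \ref{prop:Fbounds2} by $\azero_k$, whereas you re-derive the estimate region by region via the ratio $(3\Y^2+x+\omega_n)/\Y$; both are valid.
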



\begin{proof}
We begin by computing:
\[
\left(D^{-1}F'(\azero )\right)_{kj} = \begin{cases}
\frac{1}{\azero_k}- \azero_k &  j = k\pm1\\
-2(\azero_{k+1} + \azero_{k-1}) - 2 \frac{k+\shift}{\azero_k t} & j = k\\
0 & \text{ otherwise}
\end{cases}
\]
We can use that $\azero_k>0$ to conclude that $\gamma_k^D = \gamma_k/\azero_k$, which gives
\begin{equation}
\gamma_k^D = 2(\azero_{k+1} + \azero_k +  \azero_{k-1}) + 2 \frac{x_k-\omega_n}{\azero_k n^{2/3}} +\mathcal{O}(n^{-2/3}).
\end{equation}
We now break this into two cases: The first is $k$ such that $\azero_k \le M n^{-1/3}$ for some large $M>0$. On this region we have that 
\[
\gamma_k^D = \gamma_k/\azero_k \ge \frac{c n^{-2/3}}{\azero_k} \ge \frac{c n^{-2/3}}{Mn^{-1/3}} = \frac{c}{M} n^{-1/3}.
\]
The proof for showing $\gamma_k^D > cn^{-1/3}$ can now be done in the same way as the proof that $\gamma_k > cn^{-1/3}$.

\end{proof}

We now have all the pieces necessary to prove Theorem \ref{thm:Fbounds}, which allowed us to apply the Kantorovich theorem.

\begin{proof}[Proof of Theorem \ref{thm:Fbounds}]
Proposition \ref{prop:Fbounds1}, Propositions \ref{prop:Fbounds4}, \ref{prop:Fbounds2}, and \ref{prop:Fbounds3} together with equations \eqref{eq:split1} and \eqref{eq:split2} give the bounds in Theorem \ref{thm:Fbounds}. 
\end{proof}

\end{document}